\newtheorem{theorem}{Theorem}[section]
\newtheorem{corollary}[theorem]{Corollary}
\newtheorem{proposition}[theorem]{Proposition}
\newtheorem{lemma}[theorem]{Lemma}
\newtheorem{conjecture}[theorem]{Conjecture}
\newtheorem{question*}{Question}
\newtheorem{problem*}{Problem}
\theoremstyle{definition}
\theoremstyle{remark}
\newtheorem*{remark}{Remark}
\numberwithin{equation}{section}
\crefname{figure}{Figure}{Figures}
\theoremstyle{plain}
\newtheorem*{theorem*}{Theorem}
\crefname{theorems}{Theorem}{Theorems}
\crefname{corollaries}{Corollary}{Corollaries}
\newtheorem*{corollary*}{Corollary}
\crefname{corollaries*}{Corollary}{Corollaries}
\crefname{lemma}{Lemma}{Lemmata}
\crefname{proposition}{Proposition}{Propositions}
\crefname{conjectures}{Conjecture}{Conjectures}
\newtheorem*{conjonjecture*}{Conjecture}
\crefname{conjonjectures*}{Conjecture}{Conjectures}
\crefname{definitions}{Definition}{Definitions}
\crefname{hypotheses}{Hypothesis}{Hypotheses}
\newcommand{\Z}{\mathbb{Z}}
\newcommand{\R}{\mathbb{R}}
\newcommand{\Q}{\mathbb{Q}}
\newcommand{\re}{\textup{Re}}
\newcommand{\im}{\textup{Im}}
\DeclareMathOperator{\Ad}{Ad}
\DeclareMathOperator{\As}{As}
\newcommand{\GL}{\mathrm{GL}}
\newcommand{\SO}{\mathrm{SO}}
\newcommand{\SU}{\mathrm{SU}}
\newcommand{\SL}{\mathrm{SL}}
\newcommand{\Zgp}{\mathrm{Z}}
\newcommand{\A}{\mathbb{A}}
\DeclareFontFamily{U}  {MnSymbolF}{}
\DeclareSymbolFont{symbolsMN}{U}{MnSymbolF}{m}{n}
\DeclareFontShape{U}{MnSymbolF}{m}{n}{
    <-6>  MnSymbolF5
   <6-7>  MnSymbolF6
   <7-8>  MnSymbolF7
   <8-9>  MnSymbolF8
   <9-10> MnSymbolF9
  <10-12> MnSymbolF10
  <12->   MnSymbolF12}{}
\DeclareFontShape{U}{MnSymbolF}{b}{n}{
    <-6>  MnSymbolF-Bold5
   <6-7>  MnSymbolF-Bold6
   <7-8>  MnSymbolF-Bold7
   <8-9>  MnSymbolF-Bold8
   <9-10> MnSymbolF-Bold9
  <10-12> MnSymbolF-Bold10
  <12->   MnSymbolF-Bold12}{}
\DeclareMathSymbol{\tbigtimes}{\mathop}{symbolsMN}{2}
\newcommand*{\bigtimes}{%
  \DOTSB
  \tbigtimes
  \slimits@ 
}
\renewcommand{\tilde}{\widetilde}
\renewcommand{\bar}{\overline}
\renewcommand{\epsilon}{\varepsilon}
\renewcommand{\H}{\mathbb{H}}
\renewcommand{\pmod}[1]{\, (\mathrm{mod} {\, #1})}
\newcommand{\cO}{\mathcal{O}}
\renewcommand{\Re}{\mathrm{Re}}
\def\Res{\mathop{\mathrm{Res}}}
\newcommand{\vol}{\text{\rm vol}}
\DeclareMathAlphabet{\mathpzc}{OT1}{pzc}{m}{it}
\renewcommand{\pmod}[1]{\,(\mathrm{mod}\,\,#1)}
\let\@wraptoccontribs\wraptoccontribs
\title[New variants of arithmetic quantum ergodicity]{New variants of arithmetic quantum ergodicity}
\author{Peter Humphries}
\address{Department of Mathematics, University of Virginia, Charlottesville, VA 22904, USA}
\email{\href{mailto:pclhumphries@gmail.com}{pclhumphries@gmail.com}}
\urladdr{\href{https://sites.google.com/view/peterhumphries/}{https://sites.google.com/view/peterhumphries/}}
\author{Jesse Thorner}
\address{Department of Mathematics, University of Illinois, Urbana, IL 61801, USA}
\email{\href{mailto:jesse.thorner@gmail.com}{jesse.thorner@gmail.com}}
\begin{document}

\begin{abstract}
We establish two new variants of arithmetic quantum ergodicity.  The first is for self-dual $\GL_2$ Hecke--Maa\ss{} newforms over $\Q$ as the level and Laplace eigenvalue vary jointly. The second is a nonsplit analogue wherein almost all restrictions of Hilbert (respectively Bianchi) Hecke--Maa\ss{} cusp forms to the modular surface dissipate as their Laplace eigenvalues grow.

\end{abstract}

\thanks{The first author was supported by the National Science Foundation (grant DMS-2302079) and by the Simons Foundation (award 965056). The second author was supported by the Simons Foundation (award MP-TSM-00002484) and the National Science Foundation (DMS-2401311).}

\maketitle

\section{Introduction}
\label{sec:intro}

\v{S}nirel'man \cite{MR0402834}, Colin de Verdi{\`e}re \cite{MR818831}, and Zelditch \cite{MR916129,MR1101972} established the quantum analogue of ergodic geodesic flow on a finite volume Riemannian manifold $X$.  To be more specific, let $\Delta$ denote the Laplace--Beltrami operator on $X$, and let $(\varphi_k)_{k=1}^{\infty}$ be an orthonormal basis of real-valued square-integrable eigenfunctions of $\Delta$.  Let $\lambda_{k}$ be the Laplace eigenvalue of $\varphi_k$, so that $\Delta \varphi_k = \lambda_k \varphi_k$; we may order $(\varphi_k)_{k=1}^{\infty}$ so that $(\lambda_k)_{k=1}^{\infty}$ is monotonically nondecreasing.  Consider the probability measures $d\mu_k\coloneqq |\varphi_k|^2 \, d\mu$, where $d\mu$ is the volume form on $X$.  If the geodesic flow on the unit cotangent bundle is ergodic (which happens, for instance, when $X$ has negative curvature), then there exists a density one subsequence $\varphi_{k_n}$ along which
\[
\lim_{n\to\infty}\mu_{k_n}(A)=\frac{\mu(A)}{\mu(X)}
\]
when $A$ is a continuity set.  This has been termed {\it quantum ergodicity}. Rudnick and Sarnak \cite{RS} have conjectured that
\[
\lim_{k\to\infty}\mu_k(A)= \frac{\mu(A)}{\mu(X)}
\]
when $X$ has negative sectional curvature, which would alleviate the need to pass to a density one subsequence.  This has been termed {\it quantum unique ergodicity}.

We may alternatively view quantum unique ergodicity in the following light. Consider the product space $X \times X$, which contains a diagonally embedded copy of $X$. Laplacian eigenfunctions on $X \times X$ are of the form $\varphi_{k_1}(x_1) \varphi_{k_2}(x_2)$, where $(x_1,x_2) \in X \times X$ and $\varphi_{k_1},\varphi_{k_2}$ are Laplacian eigenfunctions on $X$. The diagonal restriction of $\varphi_{k_1}(x_1) \varphi_{k_2}(x_2)$ to $X$ yields a signed measure $\varphi_{k_1}(x) \varphi_{k_2}(x) \, d\mu(x)$ on $X$. When $k_1 = k_2 = k$, this is not just a signed measure but a probability measure $\mu_k \coloneqq |\varphi_k|^2 \, d\mu$, and quantum unique ergodicity concerns the limiting behavior of these probability measures. On the other hand, the off-diagonal signed measure $\varphi_{k_1}(x) \varphi_{k_2}(x) \, d\mu(x)$ with $k_1 \neq k_2$ may interpreted as a \emph{transition amplitude}, and Zelditch posed the question of studying the limiting behavior of these signed measures as an off-diagonal analogue of quantum unique ergodicity \cite[Problem 3.2]{Zel10}. The average limiting behavior of these signed measures has implications concerning the geodesic flow on $X$, such as whether it is ergodic or is also weak mixing \cite[Theorems 4.1 and 4.4]{Zel10}.

Let $\Gamma=\SL_2(\Z)$. The orbifold $\Gamma\backslash\H$ has attracted much attention because of its arithmetic structure.  The volume form $d\mu(z)$ is the measure $y^{-2} \, dx \, dy$ (where $z=x+iy$), and $\mu(\Gamma\backslash\mathbb{H})=\frac{\pi}{3}$.  Let $(\varphi_k)_{k=1}^{\infty}$ denote an orthonormal basis of Maa\ss{} cusp forms satisfying $\Delta\varphi_k(z) = \lambda_k\varphi_k(z)$, where $\Delta=-y^2(\frac{\partial^2}{\partial x^2}+\frac{\partial^2}{\partial y^2})$.  We may diagonalize the space of Maa\ss{} cusp forms so that we may take each $\varphi_k$ to be an eigenfunction of all Hecke operators and the involution $\varphi_k(z)\mapsto \varphi_k(-\bar{z})$.  We call such an eigenfunction a Hecke--Maa\ss{} cusp form.  We expect the cuspidal spectrum of $\Gamma\backslash\mathbb{H}$ to be simple \cite{Ste94}, so that every Maa\ss{} cusp form is a Hecke--Maa\ss{} cusp form.

Let $\mathbf{1}_B$ be the indicator function of an injective geodesic ball $B$ in $\Gamma\backslash\H$, and for a probability measure $\nu$ on $\Gamma \backslash \H$, define the discrepancy
\begin{equation}
\label{eqn:discrepancy}
D(\nu)\coloneqq \sup_{B\subseteq \Gamma\backslash\H}\Big|\nu(\mathbf{1}_B)-\frac{3}{\pi}\mu(\mathbf{1}_B)\Big|.
\end{equation}
If $D(\mu_k)\to 0$ as $k\to\infty$, as predicted by the quantum unique ergodicity conjecture, then the measures $d\mu_k$ converge in the weak-* topology to $(3/\pi) d\mu$.  The rate of decay of $D(\mu_k)$ as $k \to \infty$ then quantifies the rate of convergence. Watson's thesis \cite{Watson} shows that the generalized Lindel{\"o}f hypothesis (GLH) for certain families of $\GL_1\times\GL_3$ and $\GL_2\times\GL_3$ Rankin--Selberg $L$-functions is essentially equivalent to
\begin{equation}
\label{eqn:mukB}
\mu_k(\mathbf{1}_B)=\frac{3}{\pi}\mu(\mathbf{1}_B)+O_{B,\epsilon}(\lambda_k^{-\frac{1}{4}+\epsilon})
\end{equation}
for every fixed injective geodesic ball $B \subseteq \Gamma \backslash \H$. Young refines this by explicating the dependence on $B$ in the error term in \eqref{eqn:mukB} under the assumption of GLH \cite[Proposition 1.5]{You16}. In particular, Young conditionally proves small scale quantum unique ergodicity---the error term in \eqref{eqn:mukB} is smaller than the main term so long as $\mu(\mathbf{1}_B)$ is a little larger than $\lambda_k^{-1/6}$. Moreover, even if $\mu(\mathbf{1}_B)$ is smaller than $\lambda_k^{-1/6}$, Young obtains bounds for the error term in \eqref{eqn:mukB} uniform in $B$ that are strong enough to imply the optimal bound $D(\mu_k) \ll_{\epsilon}\lambda_{k}^{-1/4+\epsilon}$, resolving a conjecture of Luo and Sarnak \cite[p.~210]{LS} conditionally under GLH.

Unconditionally, there are no known individual bounds for $D(\mu_k)$ other than the work of Lindenstrauss \cite{Lindenstrauss} and Soundararajan \cite{Sound_QUE_Maass}, which shows that
\begin{equation}
\label{eqn:QUE_Maass}
\lim_{k\to\infty}D(\mu_k)=0	
\end{equation}
with an unspecified rate of convergence. On the other hand, unconditional bounds for $D(\mu_k)$ are quite strong {\it on average}.  Luo and Sarnak \cite[Theorem 1.5]{LS} proved a strong quantitative version of Zelditch's quantum ergodicity result, namely
\begin{equation}
\label{eqn:Luo_Sarnak}
\frac{1}{|\{\lambda_k\leq T\}|}\sum_{\lambda_{k}\leq T}D(\mu_k)^2\ll_{\epsilon}T^{-\frac{1}{21}+\epsilon}.
\end{equation}
Chebyshev's inequality then implies that for fixed positive real numbers $\alpha$ and $\beta$ satisfying $2\alpha+\beta<\frac{1}{21}$, we have that
\begin{equation}
\label{eqn:Luo_Sarnak2}
\frac{|\{\lambda_k \leq T : D(\mu_{k})\geq \lambda_{k}^{-\alpha}\}|}{|\{\lambda_k\leq T\}|} \ll T^{-\beta}.
\end{equation}

Following Kowalski, Michel, and Vanderkam \cite{KMV}, one can consider variants of quantum ergodicity or quantum unique ergodicity wherein the surface on which the Hecke--Maa{\ss} forms are defined varies instead of the Laplace eigenvalue.  Given an integer $q\geq 1$, let $\Gamma_0(q)$ be the level $q$ Hecke congruence subgroup of $\Gamma$.  Let $\varphi$ be a Hecke--Maa{\ss} newform of level $q_{\varphi}$, trivial nebentypus, and Laplace eigenvalue $\lambda_{\varphi} = \frac{1}{4} + t_{\varphi}^2>0$, so that $\varphi$ is defined on the orbifold $\Gamma_0(q)\backslash\H$; we normalize $\varphi$ such that $\int_{\Gamma_0(q) \backslash \H} |\varphi(z)|^2 \, d\mu(z) = 1$.  Since $\Gamma_0(q)$ is a finite-index subgroup of $\Gamma_0(1)=\Gamma$, it follows that for each fundamental domain $\mathcal{F}$ of $\Gamma\backslash\H$, there exists a fundamental domain of $\Gamma_0(q)\backslash\H$ containing $\mathcal{F}$.

Let $C_b(\Gamma\backslash\mathbb{H})$ be the set of bounded, continuous functions $H\colon \Gamma\backslash\mathbb{H}\to\mathbb{C}$.  If $H\in C_b(\Gamma\backslash\mathbb{H})$, then the pushforward to $\Gamma\backslash\mathbb{H}$ of the $L^2$-mass of $\varphi$ (weighted by $H$) is the finite measure on $\Gamma \backslash \H$ given by
\begin{equation}
\label{eqn:measure_level}
\mu_{\varphi}(H)\coloneqq \int_{\Gamma_0(q_{\varphi})\backslash\mathbb{H}}|\varphi(z)|^2 H(z) \, d\mu(z).
\end{equation}
For fixed $\lambda>0$, the assertion that
\begin{equation}
\label{eqn:QUE_Maass_level}
\lim_{q\to\infty}\max_{\substack{q_{\varphi} = q \\ \lambda_{\varphi}\leq\lambda}} D(\mu_{\varphi})=0
\end{equation}
is one possible ``level-aspect'' variation of \eqref{eqn:QUE_Maass}, where the orbifold on which the Hecke--Maa{\ss} newforms are defined varies instead of the Laplace eigenvalues.  A proof of \eqref{eqn:QUE_Maass_level} appears to be beyond the reach of current methods, although such a result along a subsequence of {\it powerful} moduli follows from the work of Nelson, Pitale, and Saha \cite{NPS}.  Going one step further, one might hope to combine \eqref{eqn:QUE_Maass} and \eqref{eqn:QUE_Maass_level} and prove a ``hybrid-aspect'' result such as
\begin{equation}
\label{eqn:QUE_Maass_hybrid}
\lim_{Q\to\infty}\,\max_{\lambda_{\varphi}q_{\varphi} \in[Q,2Q]}
D(\mu_{\varphi})=0.
\end{equation}

The assertion \eqref{eqn:QUE_Maass_level} may be viewed as a form of quantum unique ergodicity in the Benjamini--Schramm limit. To explain this notion, we define Benjamini--Schramm convergence in further generality. We let $X = G / K$ be a Riemannian globally symmetric space of noncompact type, so that $G$ is a connected semisimple Lie group with finite center and $K$ is a maximal compact subgroup, and we let $(\Gamma_k)_{k = 1}^{\infty}$ be a sequence of cofinite lattices in $G$ whose covolume tends to infinity with $k$. The local injectivity radius of $X_k = \Gamma_k \backslash G/K$ about $x \in \Gamma_k \backslash G/K$ is
\[
\mathrm{InjRad}_{X_k}(x) \coloneqq \frac{1}{2} \inf_{\gamma \in \Gamma_k - \{1\}} d(x,\gamma x).
\]
The sequence of locally symmetric spaces $(X_k)_{k = 1}^{\infty}$ is said to Benjamini--Schramm converge to the symmetric space $X$ if for any $R > 0$,
\[
\lim_{k \to \infty} \frac{\vol\left(\left\{x \in X_k : \mathrm{InjRad}_{X_k}(x) \leq R\right\}\right)}{\vol(X_k)} = 0.
\]

The notion of quantum ergodicity in the Benjamini--Schramm limit (under the assumption of a uniform spectral gap for $(X_k)_{k = 1}^{\infty}$) has been investigated by many authors in several different settings \cite{ABL18,BM23,LS17,LS24,Pet23}. When $G = \SL_2(\R)$ and $K = \SO(2)$, the symmetric space $G/K$ is simply the hyperbolic plane $\H$, and the sequence of locally symmetric spaces $(\Gamma_0(q_k) \backslash \H)_{k = 1}^{\infty}$ is known to Benjamini--Schramm converge to the hyperbolic plane as $q_k \to \infty$ \cite[Corollary 2.2]{Rai17}; moreover, the locally symmetric spaces $(\Gamma_0(q_k) \backslash \H)_{k = 1}^{\infty}$ have a uniform spectral gap since the smallest positive Laplacian eigenvalue on these surfaces is at least $\frac{1}{4} - (\frac{7}{64})^2$ \cite[Appendix 2]{Kim}. Returning to \eqref{eqn:QUE_Maass_level}, we see that a result of this form encompasses a particular case of quantum unique ergodicity in the Benjamini--Schramm limit, while a weaker assertion of the form
\[
\lim_{q \to \infty} \frac{1}{|\{\lambda_{\varphi} \leq \lambda : q_{\varphi} = q\}|} \sum_{\substack{q_{\varphi} = q \\ \lambda_{\varphi}\leq\lambda}} D(\mu_{\varphi}) = 0
\]
would encompass quantum ergodicity in the Benjamini--Schramm limit. Slightly weaker results of this form concerning quantum ergodicity in the Benjamini--Schramm limit hold for more general sequences of locally symmetric spaces due to work of Le Masson and Sahlsten on $\H = \SL_2(\R) / \SO(2)$ \cite[Theorem 1.1]{LS17} and due to Brumley and Matz on $\SL_d(\R) / \SO(d)$ with $d \geq 3$ \cite[Theorem 1.1]{BM23}.

\section{Main results}
\label{subsec:quantum_variance}

In this paper, we investigate generalizations of the arithmetic quantum ergodicity results \eqref{eqn:Luo_Sarnak} and \eqref{eqn:Luo_Sarnak2}. Our first result implies \eqref{eqn:QUE_Maass_hybrid} along a density one subsequence of $\varphi$. Let $\mathpzc{F}$ be the set of $\GL_2$ Hecke--Maa{\ss} newforms $\varphi$ of weight zero and trivial nebentypus. Let $\Ad\varphi$ denote the adjoint lift of $\varphi$, as introduced in \cite{GJ}. The adjoint lift is a $\GL_3$ Hecke--Maa\ss{} newform of weight zero and trivial nebentypus.  Its arithmetic conductor $q_{\Ad \varphi}$ is a perfect square and $\sqrt{q_{\Ad \varphi}}$ divides $q_{\varphi}$, and its analytic conductor is $3q_{\Ad \varphi} (3 + 2|t_{\varphi}|)^2$. Recall the definitions in \eqref{eqn:discrepancy} and \eqref{eqn:measure_level}. We prove the following result.

\begin{theorem}
\label{thm:QE}
Let $Q\geq 1$ and $\mathpzc{F}(Q)\coloneqq \{\varphi\in\mathpzc{F}\colon \lambda_{\varphi}q_{\varphi}\in[Q,2Q]\}$.  If $\epsilon>0$, then
\[
|\{\Ad \varphi\colon \varphi\in\mathpzc{F}(Q),~D(\mu_{\varphi})\geq (\lambda_{\varphi}q_{\varphi})^{-\frac{\epsilon}{10^{12}}}\}|\ll_{\epsilon}Q^{\epsilon}.
\]
The implied constant is ineffective.
\end{theorem}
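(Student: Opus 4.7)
The plan combines three ingredients: (i) a spectral expansion of indicator functions of geodesic balls, (ii) the Watson--Ichino triple product formula linking $\mu_\varphi(H)$ to central Rankin--Selberg $L$-values, and (iii) a log-free zero density estimate for these $L$-values averaged over the hybrid family $\mathpzc{F}(Q)$.

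First, given an injective geodesic ball $B\subseteq \Gamma\backslash\H$, I would replace $\mathbf{1}_B$ by a smooth approximation of the same mean and spectrally expand $\mathbf{1}_B-\frac{3}{\pi}\mu(\mathbf{1}_B)$ against an orthonormal basis of $L^2(\Gamma\backslash\H)$ consisting of Hecke--Maa{\ss} cusp forms $\{H_j\}$ and unitary Eisenstein series $E(\cdot,\tfrac12+it)$. A Sobolev-type argument shows that only basis elements with spectral parameter $\ll Q^\epsilon$ contribute nontrivially to $D(\mu_\varphi)$, and there are only $\ll Q^\epsilon$ such elements. The problem therefore reduces to bounding $|\mu_\varphi(H)|$ uniformly for each such $H$; the contribution from the continuous spectrum is handled analogously via subconvex bounds.

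Second, for such $H$, the Watson--Ichino formula combined with the ineffective Hoffstein--Lockhart lower bound $L(1,\Ad\varphi)\gg_\epsilon (\lambda_\varphi q_\varphi)^{-\epsilon}$ (the source of the ineffective implied constant) yields
\[
|\mu_\varphi(H)|^2 \ll_{H,\epsilon} (\lambda_\varphi q_\varphi)^{-1+\epsilon}\, L(1/2, \Ad\varphi\times H).
\]
Hence Theorem \ref{thm:QE} reduces to showing that $L(1/2,\Ad\varphi\times H)\ll (\lambda_\varphi q_\varphi)^{1-c\epsilon}$ for all but $\ll_\epsilon Q^\epsilon$ adjoint lifts $\Ad\varphi$ of $\varphi\in\mathpzc{F}(Q)$, where $c$ is a sufficiently small positive absolute constant.

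The final step is a log-free zero density estimate for the $\GL_3\times \GL_2$ Rankin--Selberg family $\{L(s,\Ad\varphi\times H) : \varphi\in\mathpzc{F}(Q)\}$ of the form: for some absolute $A\geq 1$, all but $\ll_\epsilon Q^{A(1-\sigma)+\epsilon}$ adjoint lifts $\Ad\varphi$ have no zero of $L(s,\Ad\varphi\times H)$ in the rectangle $\{\Re s\geq \sigma,\ |\Im s|\leq Q^\epsilon\}$. A contour shift converts the absence of such zeros into the required upper bound on the central value $L(1/2,\Ad\varphi\times H)$. Taking $1-\sigma$ a small multiple of $\epsilon/A$ produces $\ll_\epsilon Q^\epsilon$ exceptional $\Ad\varphi$ for each fixed $H$; a union bound over the $\ll Q^\epsilon$ test functions $H$ from the first step completes the proof. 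The constant $10^{12}$ absorbs the cumulative multiplicative losses from all these reductions. The principal obstacle is establishing this log-free zero density estimate uniformly in the hybrid aspect, which requires merging Deshouillers--Iwaniec-type Kuznetsov large sieve inputs (for the level aspect) with precise archimedean analysis (for the eigenvalue aspect), and ineffectively ruling out a potential Siegel-type real exceptional zero in the family.
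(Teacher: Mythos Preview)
Your three-step outline---spectral expansion, Watson--Ichino, and zero density estimates yielding subconvexity on average---is precisely the paper's strategy. However, two points in your reduction are not correct as stated and deserve attention.

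First, your treatment of the supremum over balls is incomplete. A Sobolev-type truncation of the spectral expansion of a smoothed $\mathbf{1}_B$ cannot be made uniform as the center of $B$ approaches the cusp: the pointwise values $|\varphi_k(w)|$ and $|E(w,\tfrac12+it)|$ grow with $\Im(w)$, so the tail bound degenerates. The paper handles this by splitting $D(\mu_\varphi)=\max\{D_T(\mu_\varphi),D^T(\mu_\varphi)\}$ according to whether the ball center has $\Im(w)<T$ or $\Im(w)\ge T$. The high-cusp piece $D^T$ is bounded directly by Soundararajan's no-escape-of-mass estimate (giving $D^T\ll T^{-1/2}\log T$ uniformly in $\varphi$), and only then is the bounded-height piece $D_T$ attacked via a mollified automorphic kernel \`a la Luo--Sarnak, which produces an explicit dependence on $T$ in the resulting spectral sum. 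Without this cusp truncation your first step does not go through.

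Second, your description of the zero density input is off. The paper does not establish a new ZDE via Kuznetsov large sieve; it quotes a general log-free ZDE for families of Rankin--Selberg $L$-functions from earlier work of the authors and combines it with the Soundararajan--Thorner mechanism converting zero-free regions near $\Re(s)=1$ into pointwise bounds at $s=\tfrac12$. No Deshouillers--Iwaniec input is used. Relatedly, the ineffectivity enters exactly once, through the Hoffstein--Lockhart--Siegel lower bound for $L(1,\Ad\varphi)$ in the dihedral case; it is not a feature of the ZDE itself, so your closing remark about ``ineffectively ruling out a Siegel-type exceptional zero in the family'' mislocates the issue.
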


\begin{remark}
The implied constant is ineffective because the contribution from the dihedral Hecke--Maa\ss{} newforms in $\mathpzc{F}(Q)$ requires Siegel's ineffective lower bound for Dirichlet $L$-functions at $s=1$. If we restrict to the subfamily of  nondihedral Hecke--Maa{\ss} newforms, then our proof shows that the implied constant is effective.
\end{remark}

\cref{thm:QE} shows that there are very few adjoint lifts of $\varphi\in\mathpzc{F}(Q)$ such that the discrepancy $D(\mu_{\varphi})$ is large. This does not preclude the possibility that several $\varphi\in\mathpzc{F}(Q)$ have the same adjoint lift and $D(\varphi)$ is large.  Nonetheless, we can use \cref{thm:QE} to prove strong bounds for $D(\mu_{\varphi})^2$ hold on average over $\varphi\in\mathpzc{F}(Q)$.

\begin{corollary}
\label{cor:QE}
If $Q\geq 1$, then
\[
\frac{1}{|\mathpzc{F}(Q)|}\sum_{\varphi\in\mathpzc{F}(Q)}D(\mu_{\varphi})^2\ll Q^{-\frac{1}{10^{12}}}.
\]
The implied constant is ineffective.
\end{corollary}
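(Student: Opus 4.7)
The plan is to deduce \cref{cor:QE} from \cref{thm:QE} via a Chebyshev-type dyadic argument combined with a multiplicity bound for the map $\varphi\mapsto\Ad\varphi$ restricted to $\mathpzc{F}(Q)$. First, since we take $\|\varphi\|_2=1$, both $\mu_\varphi$ (pushed forward to $\Gamma\backslash\H$) and $(3/\pi)\,d\mu$ are probability measures on $\Gamma\backslash\H$, yielding the trivial bound $D(\mu_\varphi)\leq 1$ valid throughout $\mathpzc{F}(Q)$.

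Set $\delta\coloneqq 10^{-12}$ and partition $\mathpzc{F}(Q)=\mathcal{L}\sqcup\mathcal{S}$ according to whether $D(\mu_\varphi)\geq Q^{-\delta/2}$ or not. The contribution of $\mathcal{S}$ is at most $|\mathpzc{F}(Q)|\,Q^{-\delta}$, while the contribution of $\mathcal{L}$ is at most $|\mathcal{L}|$ by the trivial bound. Since $\lambda_\varphi q_\varphi>Q$, we have $Q^{-\delta/2}\geq(\lambda_\varphi q_\varphi)^{-\delta/2}$, so \cref{thm:QE} applied with $\epsilon=\tfrac12$ yields $|\{\Ad\varphi:\varphi\in\mathcal{L}\}|\ll Q^{1/2}$. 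To translate this adjoint count into a bound on $|\mathcal{L}|$, I would use the standard fact that $\Ad\varphi_1=\Ad\varphi_2$ forces $\varphi_2=\varphi_1\otimes\chi$ for some quadratic Dirichlet character $\chi$. Since twisting preserves $\lambda_\varphi$ and the membership conditions $\lambda_{\varphi_i}q_{\varphi_i}\in(Q,2Q]$ confine $q_{\varphi_1\otimes\chi}$ to $[q_{\varphi_1}/2,2q_{\varphi_1}]$, an inspection of the local $\GL_2$ conductor formula shows that the conductor of $\chi$ must be supported on primes dividing $q_{\varphi_1}$, giving at most $2^{\omega(q_{\varphi_1})}\ll_\epsilon Q^\epsilon$ admissible twists. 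Therefore $|\mathcal{L}|\ll_\epsilon Q^{1/2+\epsilon}$.

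Finally, combining with the Weyl-law count $|\mathpzc{F}(Q)|\asymp Q^2$ (summing the $\asymp Q\prod_{p\mid q_0}(1+1/p)$ newforms of level $q_0$ and Laplace eigenvalue in $(Q/q_0,2Q/q_0]$ over $q_0\lesssim Q$) gives
\[
\frac{1}{|\mathpzc{F}(Q)|}\sum_{\varphi\in\mathpzc{F}(Q)}D(\mu_\varphi)^2 \ll Q^{-\delta}+Q^{-3/2+\epsilon},
\]
which is $\ll Q^{-\delta}=Q^{-1/10^{12}}$ once $\epsilon$ is chosen smaller than $3/2-\delta$. The only nontrivial ingredient beyond \cref{thm:QE} is the twist-multiplicity bound; apart from the local conductor computation involved there, the remainder of the argument is routine dyadic splitting plus the trivial $L^\infty$ bound on $D$. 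The ineffectivity of the implied constant is inherited directly from \cref{thm:QE}.
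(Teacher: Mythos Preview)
Your overall strategy matches the paper's proof: split by the size of $D(\mu_\varphi)$, feed the large-discrepancy set into \cref{thm:QE} to bound the number of distinct adjoint lifts, convert this to a bound on the number of forms via a multiplicity estimate for the map $\varphi\mapsto\Ad\varphi$, and finish with $|\mathpzc{F}(Q)|\asymp Q^2$ together with the trivial bound $D(\mu_\varphi)\leq 1$.

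The gap is in your multiplicity bound. The assertion that the conductor of $\chi$ must be supported on primes dividing $q_{\varphi_1}$ is false: twisting can \emph{lower} the local conductor at a prime $p\mid q_{\varphi_1}$, and this drop can compensate for the jump at a new prime $p'\nmid q_{\varphi_1}$. Concretely, take $\varphi_1$ with $q_{\varphi_1}=p^2$ whose local component at an odd prime $p$ is $\mathrm{St}\otimes\chi_p$ (i.e.\ $\varphi_1=\varphi_0\otimes\chi_p$ for some level-$p$ newform $\varphi_0$). Then for any prime $p'\nmid p$ with $p'^2\sim p$, setting $\chi=\chi_p\chi_{p'}$ gives $q_{\varphi_1\otimes\chi}=p\,p'^2\sim p^2=q_{\varphi_1}$, so $\varphi_1\otimes\chi\in\mathpzc{F}(Q)$ while $p'\mid f_\chi$ and $p'\nmid q_{\varphi_1}$. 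There are $\gg p^{1/2}/\log p$ such $p'$, so in the regime $q_{\varphi_1}\asymp Q$ one gets $m(\varphi_1,Q)\gg Q^{1/4-o(1)}$, contradicting your $O_\epsilon(Q^\epsilon)$ claim. The paper instead invokes \cite[Theorem~4.1.2]{Ramakrishnan} to obtain $m(\varphi,Q)\ll_\delta Q^{1/2+\delta}$ (and $m(\varphi,Q)=1$ when $q_\varphi$ is squarefree). This weaker bound is exactly what your own local analysis gives once you observe that the ``away from $q_{\varphi_1}$'' part of $f_\chi$ has conductor $\ll q_{\varphi_1}^{1/2}$. Fortunately it still suffices: with $|\mathcal{L}|\ll Q^{1/2}\cdot Q^{1/2+\delta}=Q^{1+\delta}$ and $|\mathpzc{F}(Q)|\asymp Q^2$ you get a contribution $\ll Q^{-1+\delta}$, which is absorbed by the $Q^{-1/10^{12}}$ term.
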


Our second result is a nonsplit analogue of arithmetic quantum ergodicity wherein almost all restrictions of Hilbert (respectively Bianchi) Hecke--Maa\ss{} cusp forms to the modular surface dissipate as their Laplace eigenvalues grow.  This particular framework appears to be new to the literature.

Let $E = \Q \oplus \Q$ be the split quadratic algebra over $\Q$, so that $E \otimes_{\Q} \R = \R \oplus \R$, and consequently $\SL_2(E \otimes_{\Q} \R) / \SO_2(E \otimes_{\Q} \R) = \H \times \H$.  Let $\cO_E$ be the ring of integers of $E$, so that $\cO_E = \Z \oplus \Z$, and consequently $\SL_2(\cO_E) = \Gamma \times \Gamma$.  As $\H = \SL_2(\R) / \SO(2)$, the product space $\Gamma \backslash \H \times \Gamma \backslash \H$ may be identified with $\SL_2(\cO_E) \backslash \SL_2(E \otimes_{\Q} \R) / \SO_2(E \otimes_{\Q} \R)$.  A natural generalization is to replace the split quadratic algebra $E = \Q \oplus \Q$ over $\Q$ with a \emph{nonsplit} quadratic algebra over $\Q$, namely a quadratic number field. We first consider the case of a real quadratic field $E = \Q(\sqrt{D})$ with ring of integers $\cO_E$; for simplicity, we assume that $E$ has narrow class number $1$. Then $\SL_2(E \otimes_{\Q} \R) / \SO_2(E \otimes_{\Q} \R)$ is again equal to $\H \times \H$, but now it is no longer the case that $\SL_2(\cO_E)$ is equal to $\Gamma \times \Gamma$. Instead of the split product space $\Gamma \backslash \H \times \Gamma \backslash \H$, we work on the nonsplit space $\SL_2(\cO_E) \backslash \H \times \H$, and rather than working with products of Maa\ss{} cusp forms $\varphi_{k_1}(z_1) \varphi_{k_2}(z_2)$, which are Laplacian eigenfunctions on $\Gamma \backslash \H \times \Gamma \backslash \H$, we work with Hilbert Maa\ss{} cusp forms $\phi_j(z_1,z_2)$, which are Laplacian eigenfunctions on $\SL_2(\cO_E) \backslash \H \times \H$. We let $(\phi_j)_{j = 1}^{\infty}$ be an orthonormal basis of Hilbert Maa\ss{} cusp forms on $\SL_2(\cO_E) \backslash \H \times \H$ and we denote by $\lambda_{1,j}=\frac{1}{4}+t_{1,j}^2$ and $\lambda_{2,j}=\frac{1}{4}+t_{2,j}^2$ the two Laplace eigenvalues of $\phi_j$, where $t_{1,j},t_{2,j}$ are the associated spectral parameters of $\phi_j$.

In place of the transition amplitude $\varphi_{k_1}(z) \varphi_{k_2}(z) \, d\mu(z)$, which is the signed measure on $\Gamma \backslash \H$ obtained by restricting the Laplacian eigenfunction $\varphi_{k_1}(z_1) \varphi_{k_2}(z_2)$ on $\Gamma \backslash \H \times \Gamma \backslash \H$ to the diagonal embedding of $\Gamma \backslash \H$, we instead consider the signed measure
\[
d\mu_j(z) \coloneqq \phi_j(z,z) \, d\mu(z)
\]
on $\Gamma \backslash \H$ obtained by restricting the function $\phi_j(z_1,z_2)$ on $\SL_2(\cO_E) \backslash \H \times \H$ to the diagonal embedding of $\Gamma \backslash \H$. Given $H\in C_b(\Gamma\backslash\H)$, we define
\begin{equation}
\label{eqn:DjH}
\mathcal{D}_j(H)\coloneqq \int_{\Gamma \backslash \H} H(z) \, d\mu_j(z) - \frac{3}{\pi} \mu_j(\Gamma \backslash \H) \int_{\Gamma \backslash \H} H(z) \, d\mu(z).
\end{equation}
A natural nonsplit analogue of \cite[Problem 3.2]{Zel10} is to investigate the limiting behavior of these signed measures.

In order to determine in what sense one should take a limit involving the two spectral parameters $t_{1,j}$ and $t_{2,j}$, we introduce the quantity
\begin{equation}
\label{eqn:Asaicond}
C(\As \phi_j) \coloneqq (3 + |t_{1,j} + t_{2,j}|)^2 (3 + |t_{1,j} - t_{2,j}|)^2.
\end{equation}
This is the archimedean part of the analytic conductor of the Asai transfer $\As\phi_j$ of $\phi_j$, as introduced in \cite{Asa77}. We pose the following conjecture regarding the signed measures $d\mu_j$.

\begin{conjecture}
\label{conj:realQUE}
For any fixed $H \in C_b(\Gamma \backslash \H)$, we have that
\begin{equation}
\label{eqn:realQUE}
\lim_{C(\As \phi_j) \to \infty} \mathcal{D}_j(H) = 0.
\end{equation}
\end{conjecture}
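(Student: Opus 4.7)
My plan is to follow the strategy developed for the split case by Watson, Ichino, and Harris--Kudla: combine a spectral decomposition of $H$ with an Ichino--Ikeda-type period identity that expresses the relevant diagonal inner products as ratios of central $L$-values. Since the constant contribution to the spectral expansion of $H$ produces exactly the main term $\tfrac{3}{\pi}\mu_j(\Gamma\backslash\H)\int H\,d\mu$ in $\mathcal{D}_j(H)$, a density argument (approximating $H\in C_b(\Gamma\backslash\H)$ in $L^2$) reduces \eqref{eqn:realQUE} to showing
\[
\mathcal{P}(H,\phi_j)\coloneqq \int_{\Gamma\backslash\H} H(z)\,\phi_j(z,z)\,d\mu(z) \longrightarrow 0 \quad \text{as } C(\As\phi_j)\to\infty
\]
for $H$ ranging through a Hecke--Maa\ss{} cuspidal basis $(\varphi_k)_k$ of $\Gamma\backslash\H$ together with the unitary Eisenstein series $E(\cdot,\tfrac12+it)$.

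For each fixed $\varphi_k$, $\mathcal{P}(\varphi_k,\phi_j)$ is an Asai-type diagonal period for the pair $(\SL_2(\Z),\SL_2(\cO_E))$, and Ichino--Ikeda-style identities (extending the work of Harris--Kudla, Ichino, and Lapid--Rallis to the Asai setting) yield a formula of the shape
\[
|\mathcal{P}(\varphi_k,\phi_j)|^2 \;\asymp\; J_{\infty}(\varphi_k,\phi_j)\cdot \frac{L(1/2,\,\varphi_k\otimes \As\phi_j)}{L(1,\Ad\varphi_k)\,L(1,\As\phi_j,\Ad)},
\]
where $J_{\infty}$ is a computable archimedean transfer factor and the numerator is a central value of a degree-$8$ Rankin--Selberg $L$-function; the Eisenstein contribution is analogously expressible through products $L(1/2+it,\As\phi_j)L(1/2-it,\As\phi_j)$. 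Since the analytic conductor of $L(s,\varphi_k\otimes\As\phi_j)$ is $\asymp_{\varphi_k} C(\As\phi_j)^2$ and Hoffstein--Lockhart controls the denominator from below, the conjecture reduces to a subconvex estimate
\[
L(\tfrac{1}{2},\,\varphi_k\otimes \As\phi_j)\ll_{\varphi_k,\epsilon} C(\As\phi_j)^{1-\delta+\epsilon}
\]
for some fixed $\delta>0$ in the $\phi_j$-aspect, alongside the analogous (easier) subconvex bound for $L(1/2,\As\phi_j)$ itself.

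The principal obstacle is this pointwise subconvexity for a $\GL_2\times\GL_4$ Rankin--Selberg $L$-function whose $\GL_4$-factor varies, a setting in which no result beyond convexity is presently known. One potential workaround is to imitate the Holowinsky--Soundararajan philosophy for holomorphic QUE, combining Soundararajan's weak subconvex bound (once a first-moment asymptotic for the logarithmic derivative of $L(s,\varphi_k\otimes\As\phi_j)$ near $s=1$ is established) with a sieve-theoretic upper bound for $\mathcal{P}(\varphi_k,\phi_j)$ arising from cancellation in sums of Asai Hecke eigenvalues. The sieve input is genuinely delicate in the non-split setting: the Fourier coefficients of $\phi_j(z,z)$ are indexed by $\Z$ rather than by $\cO_E$, which thins the relevant arithmetic progressions and weakens the available pointwise bounds. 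It therefore seems plausible that, with present technology, only a density-one analogue of the conjecture---in the spirit of \cref{thm:QE}---is attainable, and that \eqref{eqn:realQUE} itself will require a genuine breakthrough on either $\GL_4$ subconvexity in the conductor aspect or on mean values of Asai $L$-functions.
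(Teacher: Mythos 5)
This statement is posed in the paper as a \emph{conjecture}, not a theorem: the authors explicitly say it ``seems out of reach by current methods,'' and they prove only a density-one analogue for a fixed test function (\cref{thm:nonsplitrealQE}). Your proposal is therefore not a proof, and you essentially concede this yourself. That said, your reduction is the same framework the paper uses: the diagonal periods against $\varphi_k$ and against $E(\cdot,\tfrac12+it)$ are expressed via Ichino/Watson-type identities in terms of $\Lambda(\tfrac12,\As\phi_j\times\varphi_k)$ and $\Lambda(\tfrac12+it,\As\phi_j)$ (the paper's \cref{lem:realEis,lem:realtriple}), with Hoffstein--Lockhart controlling the adjoint $L$-values in the denominator, and the remaining obstruction is exactly the one you name: subconvexity for a $\GL_2\times\GL_4$ Rankin--Selberg $L$-function in the aspect where the $\GL_4$-factor varies. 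The paper circumvents this only on average, replacing pointwise subconvexity by a zero-density estimate that yields subconvex bounds for all but $O_\epsilon(Q^\epsilon)$ of the $\phi_j$.

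There is, however, a genuine gap in your reduction beyond the missing subconvexity, and it is one the paper flags explicitly. You assert that a density/approximation argument reduces the claim for arbitrary $H\in C_b(\Gamma\backslash\H)$ to the claim for $H$ in a spectral basis. For the classical probability measures $|\varphi_k|^2\,d\mu$ this works because the total mass is $1$, so an $L^1$- or $L^\infty$-approximation of $H$ costs only the approximation error. Here $d\mu_j(z)=\phi_j(z,z)\,d\mu(z)$ is a \emph{signed} measure, and controlling $\int H\,d\mu_j$ for general bounded continuous $H$ by its values on a dense subclass requires an a priori bound on the total variation $|\mu_j|(\Gamma\backslash\H)$, equivalently on $\int_{\Gamma\backslash\H}|\phi_j(z,z)|\,d\mu(z)$, uniformly (or at least polynomially) in $C(\As\phi_j)$. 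No such bound is currently known, which is precisely why \cref{thm:nonsplitrealQE} fixes $H\in C_c^\infty(\Gamma\backslash\H)$ in advance and why the exceptional set there depends on $H$. Also, a small caveat on your Holowinsky--Soundararajan workaround: their method exploits nonnegativity of the measures $|\varphi_k|^2\,d\mu$ and multiplicativity of $|\lambda_{\varphi}(n)|^2$ in an essential way, neither of which is available for the signed diagonal restriction $\phi_j(z,z)$, so that route faces the same signed-measure obstruction.
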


This conjecture may be thought of as a nonsplit analogue of an off-diagonal variant of quantum unique ergodicity for $\Gamma \backslash \H$, where the restriction of a Hilbert Maa\ss{} cusp form $\phi_j(z_1,z_2)$ on $\SL_2(\cO_E) \backslash \H \times \H$ to the diagonal embedding of $\Gamma \backslash \H$ replaces the restriction of $\varphi_{k_1}(z_1) \varphi_{k_2}(z_2)$ on $\Gamma \backslash \H \times \Gamma \backslash \H$ to the diagonal embedding of $\Gamma \backslash \H$, where $\varphi_{k_1},\varphi_{k_2}$ are Maa\ss{} cusp forms on $\Gamma \backslash \H$.

As we show in \cref{lem:realBS}, if $\phi_j$ is additionally an eigenfunction of all the Hecke operators, then $\mu_j(\Gamma \backslash \H) = 0$ except on the rare occasion that $\phi_j$ is the base change of a Hecke--Maa\ss{} newform on $\Gamma_0(D) \backslash \H$ of level $D$ and nebentypus $\chi_D$, the primitive quadratic character modulo $D$. Thus, \cref{conj:realQUE} may be interpreted as stating that the restriction of a Hilbert Hecke--Maa\ss{} newform $\phi_j$ to $\Gamma \backslash \H$ dissipates, rather than equidistributes, as $C(\As \phi_j) \to \infty$ apart from when $\phi_j$ is a base change (cf.\ \cite[(92)]{Zel10}).

\cref{conj:realQUE} seems out of reach by current methods. When $\phi_j$ is the base change of a cuspidal \emph{holomorphic} Hecke eigenform of even weight $k$ and $H$ is a fixed Hecke--Maa\ss{} cusp form, Nelson has shown that the analogue of \eqref{eqn:realQUE} holds as $k \to \infty$ \cite[Theorem B]{Nel20}. His method is also valid when $\phi_j$ is the base change of a Hecke--Maa\ss{} newform provided one additionally assumes the generalized Ramanujan conjecture, but no longer applies when $\phi_j$ is not a base change.

A natural weakening of \cref{conj:realQUE} is the conjecture that there exists a density one subsequence $\phi_{j_n}$ for which \eqref{eqn:realQUE} holds for all $H \in C_b(\Gamma \backslash \H)$; this in turn may be thought of as a nonsplit analogue of quantum ergodicity. We prove the following result towards this, where we instead first fix a nice test function $H$ and then show that, apart from a very small number of exceptional Hilbert Hecke--Maa\ss{} cusp forms $\phi_j$, $|\mathcal{D}_j(H)|$ decays polynomially in $C(\As\phi_j)$.

\begin{theorem}
\label{thm:nonsplitrealQE}
Let $H \in C_c^{\infty}(\Gamma \backslash \H)$. Let $(\phi_j)_{j = 1}^{\infty}$ be an orthonormal basis of Hilbert Hecke--Maa\ss{} cusp forms.  For $Q \geq 1$, let $\mathpzc{F}_{\mathrm{As}}(Q) \coloneqq \{\phi_j\colon C(\As \phi_j) \in [Q,2Q]\}$. If $\epsilon > 0$, then
\[
|\{\phi_j \in \mathpzc{F}_{\mathrm{As}}(Q)\colon |\mathcal{D}_j(H)| > C(\As\phi_j)^{-\frac{\epsilon}{10^{12}}}\}| \ll_{D,H,\epsilon} Q^{\epsilon}.
\]
\end{theorem}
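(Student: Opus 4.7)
The plan is to follow the strategy behind \cref{thm:QE}, replacing the triple product formula for cusp forms on $\Gamma \backslash \H$ by an Asai-type period formula for the diagonal restriction. First, I would spectrally decompose $H$ on $\Gamma \backslash \H$ against an orthonormal basis $(\psi_k)$ of Hecke--Maa\ss{} cusp forms plus a continuous Eisenstein integral. The contribution of the constant function cancels the subtracted term $\frac{3}{\pi}\mu_j(\Gamma\backslash\H)\int H \, d\mu$ in the definition of $\mathcal{D}_j(H)$, so that
\[
\mathcal{D}_j(H) = \sum_k \langle H, \psi_k\rangle\, I_j(\psi_k) + \mathrm{Eis},\qquad I_j(\psi) \coloneqq \int_{\Gamma \backslash \H} \phi_j(z,z)\, \overline{\psi(z)}\, d\mu(z).
\]
Since $H \in C_c^\infty(\Gamma \backslash \H)$, the coefficients $\langle H, \psi_k\rangle$ decay faster than any polynomial in the spectral parameter of $\psi_k$, so the task reduces to bounding $|I_j(\psi)|^2$ on average over $\phi_j \in \mathpzc{F}_{\mathrm{As}}(Q)$ with explicit polynomial dependence on $C(\psi)$.

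Next, I would apply an Ichino-type period identity for the diagonal restriction of a Hilbert Maa\ss{} form to the modular surface. Unfolding the Asai integral representation should yield, for each $\psi$ in the spectrum, a formula of the shape
\[
|I_j(\psi)|^2 = \frac{W_\infty(\phi_j, \psi)}{L(1,\Ad\,\phi_j)\, L(1,\Ad\,\psi)}\, L\!\left(\tfrac12,\, \As\,\phi_j \otimes \psi\right),
\]
together with an analogous identity on the Eisenstein part giving $|L(\tfrac12+it, \As\,\phi_j)|^2$. Here $L(\tfrac12, \As\,\phi_j \otimes \psi)$ is a degree-$8$ Rankin--Selberg $L$-function attached to the $\GL_4/\Q$ Asai lift twisted by $\psi$, and the archimedean weight $W_\infty(\phi_j,\psi)$ is controlled polynomially in $C(\As\,\phi_j)$ and $C(\psi)$.

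The crux is a large-sieve/second-moment estimate of the shape
\[
\sum_{\phi_j \in \mathpzc{F}_{\mathrm{As}}(Q)} \frac{L(\tfrac12, \As\,\phi_j \otimes \psi)}{L(1, \Ad\,\phi_j)} \ll_{D,\epsilon} Q^{1+\epsilon}\, C(\psi)^{A}
\]
for some absolute $A>0$, with an analogous bound for the Eisenstein contribution. I would establish this by combining an approximate functional equation for $L(s,\As\,\phi_j \otimes \psi)$ with the Kuznetsov trace formula for Hilbert Maa\ss{} cusp forms over $E$, exploiting nonnegativity of central values and the fact that the Asai transfer detects $\SL_2(\cO_E)$-invariance at the level of Fourier coefficients. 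Summing up against the rapidly decaying $\langle H,\psi_k\rangle$ yields
\[
\sum_{\phi_j \in \mathpzc{F}_{\mathrm{As}}(Q)} |\mathcal{D}_j(H)|^2 \ll_{D,H,\epsilon} Q^{1-\eta}
\]
for some $\eta$ of the order $10^{-12}$, and Chebyshev's inequality converts this moment bound into the claimed bound on the exceptional set, using $C(\As\,\phi_j) \asymp Q$ uniformly on the family.

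The main obstacle is the second-moment bound for the Asai-twisted $L$-functions. The Kuznetsov formula over $E$ produces sums of $\SL_2(\cO_E)$ Kloosterman sums with moduli ranging over totally positive elements of $\cO_E$, and one must arithmetically separate the Asai structure in order to reduce these to ordinary Kloosterman sums over $\Z$ and then extract cancellation on the off-diagonal. Securing a saving $Q^{\eta}$ that loses only polynomially in the archimedean parameter $C(\psi)^A$, so that the spectral sum over $\psi_k$ and Eisenstein integral still converge against the Sobolev-controlled coefficients of $H$, is the decisive technical step.
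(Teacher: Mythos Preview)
Your opening steps match the paper: spectrally decompose $H$, cancel the constant term against $\frac{3}{\pi}\mu_j(\Gamma\backslash\H)\int H\,d\mu$, and invoke an Ichino-type identity expressing $|I_j(\psi_k)|^2$ in terms of $L(\tfrac{1}{2},\As\phi_j\times\psi_k)$, with an analogous Eisenstein contribution. This is exactly the content of the paper's \cref{lem:realEis,lem:realtriple} and the lemma producing the bound \eqref{eqn:DjHbound}.

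The divergence, and the genuine gap, is in how you pass from the period formula to the exceptional-set bound. You propose a Kuznetsov second-moment estimate $\sum_{\phi_j\in\mathpzc{F}_{\As}(Q)}L(\tfrac12,\As\phi_j\otimes\psi)\ll Q^{1+\epsilon}C(\psi)^A$, leading to $\sum_{\phi_j}|\mathcal{D}_j(H)|^2\ll Q^{1-\eta}$ with $\eta\sim 10^{-12}$, followed by Chebyshev. But Chebyshev from such a moment bound gives
\[
\big|\{\phi_j:|\mathcal{D}_j(H)|>Q^{-\delta}\}\big|\leq Q^{2\delta}\sum_{\phi_j}|\mathcal{D}_j(H)|^2\ll Q^{1-\eta+2\delta},
\]
which is essentially $Q$, not $Q^{\epsilon}$. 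No second-moment bound of Lindel\"of-on-average strength can ever produce an exceptional set of size $O(Q^{\epsilon})$; you would need $\eta$ close to $1$, which is vastly stronger than what any spectral large sieve delivers.

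The paper bypasses this obstruction entirely. Instead of averaging, it uses \cref{prop:subconvexity}, which combines the zero-density estimate of \cref{lem:ZDE} with the Soundararajan--Thorner inequality of \cref{lem:central_values} to show that a genuine subconvex bound $L(\tfrac12,\As\phi_j\times\psi_k)\ll C(\As\phi_j)^{1/2-\delta}$ holds for \emph{every} $\phi_j$ outside a set of size $O(Q^{\epsilon})$. This is \cref{prop:Asaibounds}. Plugging the individual subconvex bound into \eqref{eqn:DjHbound} gives $|\mathcal{D}_j(H)|\ll C(\As\phi_j)^{-\delta}$ for each non-exceptional $\phi_j$ directly, with no Chebyshev step. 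The mechanism is ``almost-all subconvexity via zero density'' rather than ``second moment plus Markov,'' and that distinction is exactly what makes the $Q^{\epsilon}$ exceptional-set bound attainable.
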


As we point out in \cref{lem:Asaisize}, \cref{thm:nonsplitrealQE} is nontrivial once $\epsilon < \tfrac{1}{2}$. We emphasize that in \cref{thm:nonsplitrealQE}, the density one sequence is dependent on the choice of test function $H \in C_c^{\infty}(\Gamma \backslash \H)$, in contrast with \eqref{eqn:Luo_Sarnak} or \cref{thm:QE}. One would like to overcome this via a diagonalization and approximation argument, as in \cite[Section 6]{MR916129}. Unfortunately, there is a major hindrance in our setting: $\mu_j$ is a signed measure, rather than a probability measure.  A diagonalization and approximation argument would require strong control over the total mass of the measure $|\mu_j|$, which we presently lack.

There is also an analogue of \cref{conj:realQUE} when $D < 0$, so that $E = \Q(\sqrt{D})$ is an imaginary quadratic field of class number $1$ with ring of integers $\cO_E$. In place of $\H \times \H = (\SL_2(\R) \times \SL_2(\R)) / (\SO(2) \times \SO(2))$, we work on hyperbolic three-space $\H^3 = \SL_2(\mathbb{C}) / \SU(2)$, where we identity $\H^3$ with the subspace $\{P = x + iy + jr \colon x + iy \in \H, \ r \in \R\}$ of the Hamiltonian quaternions. In place of an orthonormal basis of Hilbert Maa\ss{} cusp forms $(\phi_j)_{j = 1}^{\infty}$ on $\SL_2(\cO_E) \backslash \H \times \H$ with associated signed measures $d\mu_j(z) \coloneqq \phi_j(z,z) \, d\mu(z)$ on $\Gamma \backslash \H$, we work with an orthonormal basis of Bianchi Maa\ss{} cusp forms $(\phi_j)_{j = 1}^{\infty}$ on $\SL_2(\cO_E) \backslash \H^3$. We denote by $\lambda_j=1+4t_j^2$ the Laplace eigenvalue of $\phi_j$, where $t_j$ is the associated spectral parameter. The archimedean part of the analytic conductor of $\phi_j$ is given by $C(\phi_j) \coloneqq (3 + |t_j|)^4$. Consider the signed measures $d\mu_j(z) \coloneqq \phi_j(z) \, d\mu(z)$ on $\Gamma \backslash \H$ obtained by restricting the function $\phi_j(P)$ on $\SL_2(\cO_E) \backslash \H^3$ to $\Gamma \backslash \H$. With this alteration of $d\mu_j(z)$, we then define $\mathcal{D}_j(H)$ just as in \eqref{eqn:DjH}. We pose the following conjecture.

\begin{conjecture}
\label{conj:imQUE}
If $H \in C_b(\Gamma \backslash \H)$ is fixed, then
\begin{equation}
\label{eqn:imQUE}
\lim_{C(\phi_j) \to \infty} C(\phi_j)^{\frac{1}{8}} \mathcal{D}_j(H) = 0.
\end{equation}
\end{conjecture}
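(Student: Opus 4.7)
\cref{conj:imQUE} asserts a quantitative dissipation rate for the restriction of Bianchi Hecke--Maa\ss{} cusp forms to the modular surface. As with the Rudnick--Sarnak conjecture for $\Gamma\backslash\H$ and its real quadratic counterpart \cref{conj:realQUE}, a full proof appears beyond present methods, although the $C(\phi_j)^{-1/8}$ rate demanded is comfortably within what GLH predicts. The plan is to set up the standard spectral--period pipeline underlying \cref{thm:nonsplitrealQE}, thereby obtaining the conjecture conditionally on GLH while establishing a density-one analog of \cref{thm:nonsplitrealQE} unconditionally.

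First, I would spectrally expand $H$ on $\Gamma\backslash\H$ as
\[
H = c_0 + \sum_k \langle H, \varphi_k\rangle \varphi_k + (\text{Eisenstein contribution}),
\]
with $\varphi_k$ ranging over an orthonormal basis of Hecke--Maa\ss{} newforms for $\Gamma$. Continuity of $H$ together with $\mu(\Gamma\backslash\H)<\infty$ gives $H\in L^2$; smoothness (as one would assume in the density-one analog of \cref{thm:nonsplitrealQE}) ensures rapid decay of the coefficients, allowing one to truncate the spectral parameter of $\varphi_k$ at some slowly growing $T$. The problem then reduces to estimating the restriction periods
\[
I_j(\varphi_k) = \int_{\Gamma\backslash\H} \phi_j(z)\,\varphi_k(z)\,d\mu(z)
\]
and their Eisenstein analogs, which are toric periods of the Bianchi form $\phi_j$ along the embedding $\GL_2(\Q)\hookrightarrow \GL_2(E)$.

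By an Ichino--Ikeda type formula, one expects an identity of the shape
\[
|I_j(\varphi_k)|^2 = \frac{L(\tfrac{1}{2},\,\Pi_j \times \mathrm{BC}_E(\varphi_k))}{L(1,\Ad\,\Pi_j)\,L(1,\Ad\,\varphi_k)}\cdot(\text{archimedean factors}),
\]
where $\Pi_j$ is the cuspidal automorphic representation of $\GL_2/E$ attached to $\phi_j$ and the central value is a $\GL_2/E$ Rankin--Selberg $L$-function, equivalently a $\GL_2\times\GL_4$ Rankin--Selberg $L$-function over $\Q$ via automorphic induction. Under GLH one has $|L(\tfrac{1}{2},\cdot)|\ll_\epsilon C(\phi_j)^\epsilon(1+\lambda_k)^{O(1)}$, and combined with Hoffstein--Lockhart and Siegel lower bounds for $L(1,\Ad)$ this yields $|I_j(\varphi_k)|^2\ll_\epsilon C(\phi_j)^{-1+\epsilon}$ after tracking the archimedean factors. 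Summing over the truncated spectrum then gives $|\mathcal{D}_j(H)|\ll_{H,\epsilon} C(\phi_j)^{-1/2+\epsilon}$, comfortably stronger than the conjectural $C(\phi_j)^{-1/8}$.

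Unconditionally, I would pursue a density-one analog of \cref{thm:nonsplitrealQE}: square, sum over $\phi_j$ with $C(\phi_j)\in(Q,2Q]$, and invoke a large-sieve or second-moment bound for the resulting Rankin--Selberg family on the critical line, then apply Chebyshev to produce an exceptional set of size $O_\epsilon(Q^\epsilon)$ outside of which $|\mathcal{D}_j(H)|$ decays polynomially in $C(\phi_j)$. The chief obstacle to the full conjecture, beyond subconvexity itself, is the uniformity in $H\in C_b$: the total variation of the signed measure $\mu_j$ is not readily controlled, so the diagonalization and approximation argument of Zelditch \cite{MR916129} does not directly apply, paralleling the difficulty the authors identify for \cref{thm:nonsplitrealQE}. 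Overcoming this appears to require either GLH across the relevant $L$-function family or a genuinely new ingredient of a measure-classification flavor in the style of Lindenstrauss \cite{Lindenstrauss}.
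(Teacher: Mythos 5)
This statement is \cref{conj:imQUE}, which the paper poses as a \emph{conjecture} and does not prove; the paper's substitute is the density-one result \cref{thm:nonsplitimQE}. You correctly recognize this, and your overall framing (GLH-conditional argument plus an unconditional density-one analogue via zero-density/subconvexity for almost all forms) matches the route the paper actually takes toward the theorem. So there is no "missing proof" to fault you for. That said, two concrete points in your sketch are off.

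First, the $L$-function in the period formula is not $L(\tfrac{1}{2},\Pi_j \times \mathrm{BC}_E(\varphi_k))$, which by the projection formula equals $L(\tfrac{1}{2},\mathcal{AI}_{E/\Q}(\Pi_j)\times\varphi_k)$ with the \emph{automorphic induction} (direct-sum induction) of $\Pi_j$. The correct object for the restriction period $\int_{\Gamma\backslash\H}\phi_j\varphi_k\,d\mu$ is the Asai (tensor-induction) Rankin--Selberg $L(\tfrac{1}{2},\As\phi_j\times\varphi_k)$, as in \cref{lem:realtriple} and its Bianchi analogue in \cref{sec:imaginary}. Both are degree $8$ over $\Q$, but their archimedean conductors differ: for fixed $t_k$ one has $C(\As\phi_j\times\varphi_k)\asymp C(\phi_j)$ while $C(\mathcal{AI}(\Pi_j)\times\varphi_k)\asymp C(\phi_j)^2$, so the distinction changes every exponent downstream. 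Second, your archimedean accounting is wrong: Stirling gives the local factor $\asymp (3+|t_{\phi}|)^{-1}\prod_{\pm}(3+|2t_{\phi}\pm t_k|)^{-1/2}\asymp C(\phi_j)^{-1/2}$ for $|I_j(\varphi_k)|^2$, so GLH yields $|\mathcal{D}_j(H)|\ll_{H,\epsilon} C(\phi_j)^{-1/4+\epsilon}$, not $C(\phi_j)^{-1/2+\epsilon}$. This still clears the conjectural threshold, but note that the normalizing factor $C(\phi_j)^{1/8}$ in \eqref{eqn:imQUE} is \emph{exactly} dictated by this archimedean decay (it is the size of the residual term $\mu_j(\Gamma\backslash\H)$ when $\phi_j$ is a base change), so getting this factor right is the whole point of the imaginary-quadratic variant. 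Finally, you are right to flag that passing from $C_c^{\infty}$ to general $H\in C_b$ requires control of the total variation of the signed measure $\mu_j$; this is precisely the obstruction the paper identifies, and it means even GLH does not give \eqref{eqn:imQUE} for all $H \in C_b(\Gamma\backslash\H)$ without an additional argument bounding $\int_{\Gamma\backslash\H}|\phi_j(z)|\,d\mu(z)$.
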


\cref{conj:imQUE} is analogous to \cref{conj:realQUE}, with two notable differences:
\begin{enumerate}
\item In \cref{conj:imQUE}, we take the limit as $C(\phi_j)$ tends to infinity, rather than $C(\As \phi_j)$; this is due to the fact that $C(\phi_j) \asymp C(\As \phi_j)$ in this setting.
\item In \cref{conj:imQUE}, we additionally insert a factor $C(\phi_j)^{1/8}$; this is due to the fact that the main term $\mu_j(\Gamma \backslash \H) \int_{\Gamma \backslash \H} H(z) \, d\mu(z)$ may be of size $C(\phi_j)^{-1/8}$ in this setting.
\end{enumerate}
If $\phi_j$ is additionally an eigenfunction of all the Hecke operators, we show that $\mu_j(\Gamma \backslash \H) = 0$ except on the rare occasion that $\phi_j$ is the base change of a Hecke--Maa\ss{} newform on $\Gamma_0(D) \backslash \H$ of weight $1$, level $-D$, and nebentypus $\chi_D$, the primitive quadratic character modulo $-D$. Thus, \cref{conj:imQUE} may be interpreted as stating that once normalized by a multiplicative factor $C(\phi_j)^{1/8}$, the restriction of a Bianchi Hecke--Maa\ss{} cusp form $\phi_j$ to $\Gamma \backslash \H$ dissipates, rather than equidistributes, as $C(\phi_j) \to \infty$ unless $\phi_j$ is a base change.

We prove the following result towards a quantum ergodicity analogue of \cref{conj:imQUE}.

\begin{theorem}
\label{thm:nonsplitimQE}
Let $H \in C_c^{\infty}(\Gamma \backslash \H)$. Let $(\phi_j)_{j = 1}^{\infty}$ be an orthonormal basis of Bianchi Hecke--Maa\ss{} cusp forms. For $Q \geq 1$, let $\mathscr{F}(Q)\coloneqq \{\phi_j \colon C(\phi_j)\in [Q, 2Q]\}$.  If $\epsilon > 0$, then
\[
\big|\big\{\phi_j \in \mathpzc{F}_{\As}(Q) \colon \big|\mathcal{D}_j(H)\big| > C(\phi_j)^{-\frac{1}{8} - \frac{\epsilon}{10^{12}}}\big\}\big| \ll_{D,H,\epsilon} Q^{\epsilon}.
\]
\end{theorem}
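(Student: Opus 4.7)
The plan is to proceed in direct analogy with the proof of \cref{thm:nonsplitrealQE}, adapted to hyperbolic 3-space and the Bianchi setting. First, I spectrally expand the test function on the modular surface as
\[
H(z) = \frac{3}{\pi} \int_{\Gamma \backslash \H} H \, d\mu + \sum_{k} \langle H, \varphi_k \rangle \varphi_k(z) + (\text{Eisenstein part}),
\]
where $(\varphi_k)$ runs over an orthonormal basis of Hecke--Maa{\ss} cusp forms on $\Gamma \backslash \H$. Substituting this into the definition \eqref{eqn:DjH} of $\mathcal{D}_j(H)$, the constant term of $H$ exactly cancels the main term $\frac{3}{\pi} \mu_j(\Gamma \backslash \H) \int H \, d\mu$, so
\[
\mathcal{D}_j(H) = \sum_{k} \langle H, \varphi_k \rangle P(\phi_j, \varphi_k) + (\text{continuous contribution}),
\]
where $P(\phi_j, \varphi_k) \coloneqq \int_{\Gamma \backslash \H} \phi_j(z) \varphi_k(z) \, d\mu(z)$. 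Since $H \in C_c^{\infty}(\Gamma \backslash \H)$, integration by parts against the Laplacian yields $|\langle H, \varphi_k \rangle| \ll_{H, A} (1+|t_k|)^{-A}$ for every $A \geq 1$, so the discrete sum can be truncated to $|t_k| \leq (\log Q)^{A}$ and the Eisenstein integral truncated analogously, with only a negligible error.

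Next, I invoke an Ichino-type period formula for $P(\phi_j, \varphi_k)$ that, in this setting, should express the square of the period as a ratio of $L$-values,
\[
|P(\phi_j, \varphi_k)|^2 = \mathcal{L}_{\infty}(t_j, t_k) \, \frac{L(1/2, \As \phi_j \times \varphi_k)}{L(1, \As \phi_j) L(1, \Ad \varphi_k)},
\]
with an archimedean factor $\mathcal{L}_{\infty}$ that decays polynomially in $C(\phi_j)$ at exactly the rate producing the $C(\phi_j)^{-1/8}$ threshold appearing in \cref{conj:imQUE}. An analogous identity handles the Eisenstein contribution, with the degree eight Rankin--Selberg central value replaced by the degree four quantity $|L(1/2 + it, \As \phi_j)|^2$.

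Third, I prove a second moment bound of the shape
\[
\sum_{\phi_j \in \mathpzc{F}_{\As}(Q)} |\mathcal{D}_j(H)|^2 \ll_{D, H, \epsilon} Q^{3/4 - 1/10^{12} + \epsilon}
\]
by applying the Kuznetsov trace formula on $\SL_2(\cO_E) \backslash \H^3$ with a spectral weight localized on $\mathpzc{F}_{\As}(Q)$, converting the average of Rankin--Selberg $L$-values into an arithmetic sum of Kloosterman sums over $\cO_E$ twisted by Hecke eigenvalues of $\varphi_k$ over $\Z$. Separating these variables via $\GL_2/\Q$ Voronoi summation on the $\varphi_k$ side, followed by Weil-type bounds for Kloosterman sums over $\cO_E$, should give a power saving beyond the ``Lindel{\"o}f-on-average'' baseline $Q^{3/4 + \epsilon}$. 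To control the denominator uniformly, I need the lower bound $L(1, \As \phi_j) \gg C(\phi_j)^{-\epsilon}$, which is effective for generic $\phi_j$ but requires a Siegel-type argument (hence ineffective bounds) for those $\phi_j$ that arise as base changes of dihedral Hecke--Maa{\ss} newforms on $\Gamma_0(-D)\backslash\H$. Markov's inequality then converts the second moment bound into the claimed count on the exceptional set.

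The main obstacle is the moment estimate in the third step: I need a genuine power saving beyond convexity, on average, for central values of the degree eight Rankin--Selberg $L$-functions $L(1/2, \As \phi_j \times \varphi_k)$ as $\phi_j$ ranges over the spectral ball $\mathpzc{F}_{\As}(Q)$ of size $\asymp Q$. The Kuznetsov formula for $\GL_2/E$ with $E$ imaginary quadratic is available through the work of Lokvenec-Guleska and Bruggeman--Motohashi, but extracting a power saving which is polynomial in (and uniform in) the spectral parameter $t_k$ of $\varphi_k$ is delicate; the very small saving $1/10^{12}$ echoes the $1/21$ saving of Luo--Sarnak in \eqref{eqn:Luo_Sarnak} and should ultimately stem from nontrivial cancellation in the resulting arithmetic sums over $\cO_E$.
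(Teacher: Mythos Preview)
Your first two steps---spectrally expanding $H$ on $\Gamma\backslash\H$, canceling the constant term, truncating the cuspidal and Eisenstein sums using the smoothness of $H$, and invoking an Ichino-type period formula relating $|P(\phi_j,\varphi_k)|^2$ to $L(\tfrac{1}{2},\As\phi_j\times\varphi_k)$---are exactly what the paper does. (One small slip: the denominator should carry $L(1,\Ad\phi_j)$, not $L(1,\As\phi_j)$.)

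The third step, however, is a genuine gap. A second moment bound plus Markov's inequality cannot produce an exceptional set of size $O(Q^{\epsilon})$; it can only produce a density-one statement of Luo--Sarnak type. Concretely: $|\mathpzc{F}_{\As}(Q)|\asymp Q^{3/4}$ by the Weyl law on $\SL_2(\cO_E)\backslash\H^3$, and your proposed bound $\sum_j|\mathcal{D}_j(H)|^2\ll Q^{3/4-1/10^{12}+\epsilon}$ combined with Markov at threshold $Q^{-1/8-\epsilon/10^{12}}$ yields an exceptional set of size
\[
\ll Q^{1/4+2\epsilon/10^{12}}\cdot Q^{3/4-1/10^{12}+\epsilon}=Q^{1-1/10^{12}+O(\epsilon)},
\]
which is hopelessly far from $Q^{\epsilon}$. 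To get $O(Q^{\epsilon})$ exceptions this way you would need the second moment to be $\ll Q^{-1/4+\epsilon}$, i.e.\ essentially Lindel\"of for every form in the family.

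The paper sidesteps this entirely. Rather than averaging, it invokes the zero density estimate of \cref{lem:ZDE} via \cref{prop:subconvexity}: for all but $O_{\epsilon}(Q^{\epsilon})$ automorphic representations $\pi\in\mathfrak{F}_n(Q)$, one has a genuine subconvex bound $|L(\tfrac{1}{2}+it,\pi\times\pi')|\leq(C(\pi)^{n'}C(\pi')^{n}(3+|t|)^{n'n})^{1/4-\delta}$. Applied with $\pi=\As\phi_j$ (or its cuspidal constituents when $\phi_j$ is a base change) and $\pi'=\varphi_k$, this gives $L(\tfrac{1}{2},\As\phi_j\times\varphi_k)<Q^{1/4-2\delta}$ for all but $O(Q^{\epsilon})$ forms $\phi_j$, which by the period formula and the archimedean Stirling analysis translates directly into $|\mathcal{D}_j(H)|<C(\phi_j)^{-1/8-\delta}$ for those $\phi_j$. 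No Kuznetsov formula, no Kloosterman sums, no Voronoi summation enters; the entire argument runs through the zero density machinery of \cref{sec:zeros}. Your Kuznetsov approach, even if it could be executed, would prove a weaker theorem.
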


We have made no effort to optimize the factor of $\frac{1}{10^{12}}$ appearing in the exponents in \cref{thm:QE,thm:nonsplitrealQE,thm:nonsplitimQE}; it could be improved with more care. Nonetheless, the method of proof cannot obtain an exponent nearly as strong as the exponent $\frac{1}{21}$ appearing in Luo and Sarnak's estimate \eqref{eqn:Luo_Sarnak}. This arises from a fundamental difference in the method of proof.  In contrast to the work of Luo and Sarnak in \cite{LS}, which relies heavily on Poincar{\'e} series, the proofs of \cref{thm:QE,thm:nonsplitrealQE,thm:nonsplitimQE} rely on spectral expansions and period integral identities (generalizing work of Ichino and Watson) that relate the inner products in these expansions to Rankin--Selberg $L$-functions on the critical line.  The work of Soundararajan and the second author \cite{ST} relates bounds for $L$-functions on the critical line to the scarcity of zeros of $L$-functions near the line $\re(s)=1$.  The desired scarcity follows from zero density estimates.

\subsection*{Acknowledgements}

We thank Yao Cheng, Alexandre de Faveri, Daniel Hu, and the anonymous referees for helpful comments.

\section{Properties of \texorpdfstring{$L$}{L}-functions}
\label{sec:L-functions}


We recall some standard facts about $L$-functions arising from automorphic representations and their Rankin--Selberg convolutions.  See \cite{Brumley,GJ2,JPSS,MW,ST}.

\subsection{Standard \texorpdfstring{$L$}{L}-functions}

Let $\mathfrak{F}_n$ be the set of cuspidal automorphic representations $\pi = \pi_{\infty} \otimes \bigotimes_p' \pi_p$ of $\GL_n(\A_{\Q})$, where the restricted tensor product runs over all primes and $\pi$ is normalized so that its central character is trivial on the positive reals. Given $\pi\in\mathfrak{F}_n$, let $\widetilde{\pi}\in\mathfrak{F}_n$ be the contragredient representation and $q_{\pi}$ be the arithmetic conductor of $\pi$. The local $L$-function $L(s,\pi_{p})$ is defined in terms of the Satake parameters $\alpha_{j,\pi}(p) \in \mathbb{C}$ by
\begin{equation}
	\label{eqn:Euler_p_single}
	L(s,\pi_{p})=\prod_{j=1}^{n}(1-\alpha_{j,\pi}(p)p^{-s})^{-1}=\sum_{k=0}^{\infty}\frac{\lambda_{\pi}(p^k)}{p^{ks}}.
\end{equation}
If $p\nmid q_{\pi}$, then $\alpha_{j,\pi}(p)\neq0$ for all $j$.  If $p \mid q_{\pi}$, then there might exist $j$ such that $\alpha_{j,\pi}(p)=0$.  The standard $L$-function $L(s,\pi)$ associated to $\pi$ is of the form
\[
L(s,\pi)=\prod_{p} L(s,\pi_{p})=\sum_{n = 1}^{\infty}\frac{\lambda_{\pi}(n)}{n^s}.
\]
The Euler product and Dirichlet series converge absolutely when $\re(s)>1$.

At the archimedean place, there are $n$ Langlands parameters $\mu_{j,\pi}\in\mathbb{C}$ such that
\[
L(s,\pi_{\infty}) = \prod_{j=1}^{n}\Gamma_{\R}(s+\mu_{j,\pi}),\qquad \Gamma_{\R}(s) \coloneqq \pi^{-\frac{s}{2}}\Gamma(\tfrac{s}{2}).
\]

Let $r_{\pi}$ be the order of the pole of $L(s,\pi)$ at $s=1$: this is $0$ unless $n = 1$ and $\pi$ is trivial, in which case $L(s,\pi)$ is the Riemann zeta function, which has a simple pole at $s=1$.  The completed $L$-function $\Lambda(s,\pi) = (s(s-1))^{r_{\pi}} q_{\pi}^{s/2}L(s,\pi)L(s,\pi_{\infty})$ is entire of order 1, and there exists a complex number $W(\pi)$ of modulus 1 such that if $s\in\mathbb{C}$, then $\Lambda(s,\pi)=W(\pi)\Lambda(1-s,\widetilde{\pi})$. The analytic conductor of $\pi$ is given by
\begin{equation}
\label{eqn:analytic_conductor_def}
C(\pi,t)\coloneqq q_{\pi}\prod_{j=1}^n(3+|it+\mu_{j,\pi}|),\qquad C(\pi)\coloneqq C(\pi,0).
\end{equation}

\subsection{Rankin--Selberg \texorpdfstring{$L$}{L}-functions}
\label{subsec:RS}

Let $\pi\in\mathfrak{F}_n$ and $\pi'\in\mathfrak{F}_{n'}$.  At each prime $p$, Jacquet, Piatetski-Shapiro, and Shalika \cite{JPSS} associate to $\pi_{p}$ and $\pi_{p}'$ a local Rankin--Selberg $L$-function
\begin{equation}
\label{eqn:RS_Dirichlet_series}
L(s,\pi_{p}\times\pi_{p}')=\prod_{j=1}^{n}\prod_{j'=1}^{n'}(1-\alpha_{j,j',\pi\times\pi'}(p) p^{-s})^{-1}=\sum_{k=0}^{\infty}\frac{\lambda_{\pi\times\pi'}(p^k)}{p^{ks}}
\end{equation}
and a local conductor $q_{\pi_{p}\times\pi_{p}'}$.  If $p\nmid q_{\pi}q_{\pi'}$, then we have the equality of sets
\begin{equation}
\label{eqn:separate_dirichlet_coeffs}
\{\alpha_{j,j',\pi\times\pi'}(p)\}=\{\alpha_{j,\pi}(p)\alpha_{j',\pi'}(p)\}.
\end{equation}
The Rankin--Selberg $L$-function $L(s,\pi\times\pi')$ associated to $\pi$ and $\pi'$ and its arithmetic conductor are
\[
L(s,\pi\times\pi')=\prod_{p}L(s,\pi_{p}\times\pi_{p}')=\sum_{n=1}^{\infty}\frac{\lambda_{\pi\times\pi'}(n)}{n^s},\qquad q_{\pi\times\pi'}=\prod_{p}q_{\pi_{p}\times\pi_{p}'}.
\]
Jacquet, Piatetski-Shapiro, and Shalika associate $n'n$ complex Langlands parameters $\mu_{\pi\times\pi'}(j,j')$ to $\pi_{\infty}$ and $\pi_{\infty}'$, from which one defines
\[
L(s,\pi_{\infty}\times\pi_{\infty}') = \prod_{j=1}^{n}\prod_{j'=1}^{n'}\Gamma\Big(\frac{s+\mu_{\pi\times\pi'}(j,j')}{2}\Big).
\]

Let $r_{\pi\times\pi'}$ be the order of the pole of $L(s,\pi\times\pi')$ at $s = 1$.  By our normalization for the central characters of $\pi$ and $\pi'$, we have that $r_{\pi\times\pi'}=0$ unless $\pi'=\tilde{\pi}$, in which case $r_{\pi\times\widetilde{\pi}}=1$.  The completed $L$-function
\begin{equation}
\label{eqn:Lambdaspixpi'}
\Lambda(s,\pi\times\pi')=(s(s-1))^{r_{\pi\times\pi'}}q_{\pi\times\pi'}^{s/2}L(s,\pi\times\pi')L(s,\pi_{\infty}\times\pi_{\infty}')
\end{equation}
is entire of order 1, and there exists a complex number $W(\pi\times\pi')$ of modulus 1 such that
\[
\Lambda(s,\pi\times\pi')=W(\pi\times\pi')\Lambda(1-s,\widetilde{\pi}\times\widetilde{\pi}').
\]
As with $L(s,\pi)$, the analytic conductor of $L(s,\pi\times\pi')$ is given by
\begin{equation}
\label{eqn:analytic_conductor_def_2}
C(\pi\times\pi',t)\coloneqq q_{\pi\times\pi'}\prod_{j=1}^n \prod_{j'=1}^{n'}(3+|it+\mu_{\pi\times\pi'}(j,j')|),\qquad C(\pi\times\pi')\coloneqq C(\pi\times\pi',0).
\end{equation}
The combined work of Bushnell and Henniart \cite{BH} and Brumley \cite[Appendix]{Humphries} yields
\begin{equation}
\label{eqn:BH}
C(\pi\times\pi',t)\ll C(\pi\times\pi')(3+|t|)^{n'n},\qquad C(\pi\times\pi')\ll C(\pi)^{n'}C(\pi')^{n}.
\end{equation}

\subsection{Zeros of \texorpdfstring{$L$}{L}-functions}
\label{sec:zeros}

For $Q \geq 1$, we denote by $\mathfrak{F}_n(Q)$ the set of cuspidal automorphic representations $\pi$ of $\GL_n(\A_{\Q})$ with analytic conductor $C(\pi)$ at most $Q$. We shall show that for any fixed cuspidal automorphic representation $\pi'$, a subconvex bound for $L(\tfrac{1}{2} + it,\pi \times \pi')$ holds for \emph{most} $\pi \in \mathfrak{F}_n(Q)$ in the large $Q$ limit. The proof relies on a zero density estimate, which, for $0 \leq \sigma \leq 1$ and $T\geq 1$, concerns the count (with multiplicity)
\[
N_{\pi\times\pi'}(\sigma,T)\coloneqq |\{\rho=\beta+i\gamma\colon L(\rho,\pi\times\pi')=0,~\beta\geq\sigma,~|\gamma|\leq T\}|.
\]

\begin{lemma}
\label{lem:ZDE}
Let $n,n'\in\{1,2,3,4\}$ and $0<\epsilon\leq 1$.  If $Q \geq 1$, $\pi'\in\mathfrak{F}_{n'}(Q^{1/11})$, and $1\leq T\leq Q^{1/11}+6$, then
\[
\sum_{\pi\in\mathfrak{F}_n(Q)}N_{\pi\times\pi'}\Big(1-\frac{\epsilon}{150},T\Big)\ll_{\epsilon} Q^{\epsilon}.
\]
\end{lemma}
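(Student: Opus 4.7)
The plan is to adapt the zero detection / mean-value framework of Soundararajan and Thorner~\cite{ST} to the family $\mathfrak{F}_n(Q)$. Set $\delta=\epsilon/150$. The Brumley bound \eqref{eqn:BH} combined with the hypotheses $\pi'\in\mathfrak{F}_{n'}(Q^{1/11})$, $T\leq Q^{1/11}+6$, and $n,n'\leq 4$ yields $C(\pi\times\pi',t)\ll Q^{A}$ uniformly for $\pi\in\mathfrak{F}_n(Q)$ and $|t|\leq T$, where $A=A(n,n')$ is bounded; this keeps all conductors polynomial in $Q$.

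First I would produce a zero detector. If $\rho=\beta+i\gamma$ is a zero of $L(s,\pi\times\pi')$ with $\beta\geq 1-\delta$ and $|\gamma|\leq T$, then via a contour shift of $\frac{1}{2\pi i}\int_{(2)}L(\rho+w,\pi\times\pi')\tilde\Phi(w)X^{w}\,dw$ past $\re w=0$, combined with a Selberg-type mollifier $M_\pi(s)=\sum_{m\leq Y}\mu_{\pi\times\pi'}(m)m^{-s}$ that cancels the (at most one) pole coming from the possible $\pi=\tilde\pi'$ case, one arrives at an inequality
\[
\Bigl|\sum_{m\leq N}\frac{a_m(\pi)}{m^{\rho}}\Bigr|\gg 1,
\]
where $a_m(\pi)$ is a Dirichlet convolution of $\lambda_{\pi\times\pi'}(m)$ with bounded coefficients and $N\leq Q^{C\delta}$ for some $C=C(n,n')$. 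Bounds on the shifted integral come from \eqref{eqn:preconvex}, while the Li-type bound \eqref{eqn:Li_sharper} controls the boundary contribution at $\re w=0$.

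Next I would establish a mean-value estimate of the form
\[
\sum_{\pi\in\mathfrak{F}_n(Q)}\Bigl|\sum_{m\leq N}\frac{a_m(\pi)}{m^{\beta+i\gamma}}\Bigr|^{2k}\ll Q^{\epsilon/3}\,N^{2k(1-\beta)}
\]
for a fixed integer $k=k(n,n')$. Opening the $2k$-th power and applying Rankin--Selberg orthogonality on $\GL_n(\A_{\Q})$ reduces the estimate to upper bounds for $L(\sigma,\Pi\times\tilde\Pi)$ near $\sigma=1$, where $\Pi$ ranges over auxiliary isobaric representations of degree at most $n(n')^{k}$; the hypothesis $n,n'\leq 4$ keeps these degrees bounded so that \eqref{eqn:Li_sharper} yields the necessary $Q^{o(1)}$ input. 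Combining the detector with the moment bound and covering $|\gamma|\leq T$ by an $O(T\log Q)$-point net at spacing $1/\log Q$ gives
\[
\sum_{\pi\in\mathfrak{F}_n(Q)}N_{\pi\times\pi'}(1-\delta,T)\ll T(\log Q)\,Q^{\epsilon/3}\,N^{2k\delta}\ll Q^{\epsilon},
\]
using $N^{2k\delta}\leq Q^{2kC\delta^{2}}\leq Q^{\epsilon/3}$ once the constant $150$ in $\delta=\epsilon/150$ is taken to dominate $2kC$.

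The main obstacle is the mean-value step, which must avoid any appeal to the Ramanujan conjecture. The log-free bound \eqref{eqn:Li_sharper} is the critical input here: it provides the required $Q^{o(1)}$ bound on Rankin--Selberg $L$-values at the edge of the critical strip, which via standard Perron-type arguments translates into the moment bound. A secondary obstacle is cleanly removing the at-most-one exceptional case $\pi=\tilde\pi'$ where $L(s,\pi\times\pi')$ has a pole at $s=1$; this is precisely what the mollifier $M_\pi$ and explicit pole-subtraction are engineered to do. The tightness of the hypotheses $T\leq Q^{1/11}$ and $C(\pi')\leq Q^{1/11}$ is what ensures that all polynomial losses from \eqref{eqn:BH} and \eqref{eqn:preconvex} remain absorbable into $Q^{\epsilon}$.
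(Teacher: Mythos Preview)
The paper does not prove this lemma from scratch: it is a two-line deduction from an existing zero density estimate, \cite[Theorem 1.1]{HumphriesThorner_ZDE}, which gives
\[
\sum_{\pi\in\mathfrak{F}_n(Q)}N_{\pi\times\pi'}(\sigma,T)\ll_{n,n',\epsilon} \big(|\mathfrak{F}_n(Q)|^{4} \big(C(\pi') Q  T\big)^{6.15\max\{n^2,n'n\}}\big)^{1-\sigma+\frac{\epsilon}{10^6}},
\]
followed by inserting $|\mathfrak{F}_n(Q)|\ll Q^{2n+1/4}$ from \cite{BTZ} and the hypotheses $C(\pi'),T\ll Q^{1/11}$, $n,n'\leq 4$, $\sigma=1-\epsilon/150$. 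The constant $150$ is precisely what makes the exponent collapse to $\epsilon$ after this bookkeeping. So the intended proof is a citation plus arithmetic, not a new zero-detection argument.

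Your proposal instead attempts to reprove the cited theorem. The overall architecture (zero detector plus a moment bound over the family) is indeed how such estimates are established, but the mean-value step as you describe it does not work. The coefficients $a_m(\pi)$ depend on $\pi$ through $\lambda_{\pi\times\pi'}(m)$, so opening a $2k$-th power does not produce a sum over the family of a fixed Dirichlet polynomial, and there is no ``Rankin--Selberg orthogonality on $\GL_n(\A_\Q)$'' that collapses such a sum. The actual input in \cite{HumphriesThorner_ZDE} is a large sieve inequality for $\mathfrak{F}_n(Q)$ (of the form $\sum_{\pi}|\sum_m b_m\lambda_\pi(m)|^2\ll(\cdots)\sum|b_m|^2$ with \emph{fixed} coefficients $b_m$), proved by rather different means, and the zero detector must be set up so that the $\pi$-dependence factors through $\lambda_\pi$ alone. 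Your sketch also does not account for where the exponent $6.15\max\{n^2,n'n\}$ and the factor $|\mathfrak{F}_n(Q)|^4$ come from, which is what pins down the constant $150$. For the purposes of this paper, you should simply cite \cite[Theorem 1.1]{HumphriesThorner_ZDE} and check the numerics.
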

\begin{proof}
Let $n,n'\geq 1$ be arbitrary.  We invoke \cite[Theorem 1.1]{HumphriesThorner_ZDE} with $\mathcal{S}=\mathfrak{F}_n(Q)$, which, after rescaling $\epsilon$, states that for $0 \leq \sigma \leq 1$ and $T\geq 1$,
\[
\sum_{\pi\in\mathfrak{F}_n(Q)}N_{\pi\times\pi'}(\sigma,T)\ll_{n,n',\epsilon} \big(|\mathfrak{F}_n(Q)|^{4} \big(C(\pi') Q  T\big)^{6.15\max\{n^2,n'n\}}\big)^{1-\sigma+\frac{\epsilon}{10^6}}.
\]
The desired result now follows from our bounds on $T$ and $C(\pi')$, the bound $|\mathfrak{F}_n(Q)|\ll 
Q^{2n+1/4}$ that follows from \cite[Appendix]{BTZ}, and the restriction to $n,n'\in\{1,2,3,4\}$.
\end{proof}

We will apply \cref{lem:ZDE} to study moments of $L$-function using the following result.

\begin{lemma}
\label{lem:central_values}
Let $n,n'\geq 1$.  Let $(\pi,\pi')\in\mathfrak{F}_{n}\times\mathfrak{F}_{n'}$, $t\in\R$, and $\epsilon>0$.  For all $0\leq \alpha<1/2$, there exist effectively computable constants $\Cl[abcon]{kappa_const1}=\Cr{kappa_const1}(n)>0$ and $\Cl[abcon]{kappa_const}=\Cr{kappa_const}(n,n')>0$ such that
\begin{align*}
	\log|L(\tfrac{1}{2}+it,\pi)|&\leq\Big(\frac{1}{4}-\frac{\alpha}{10^{9}}
\Big)\log(C(\pi)(3+|t|)^{n})\\
&+\frac{\alpha}{10^7}N_{\pi}(1-\alpha,|t|+6)+2\log|L(\tfrac{3}{2}+it,\pi)|+\Cr{kappa_const1}.
\end{align*}
and
\begin{align*}
	\log|L(\tfrac{1}{2}+it,\pi\times\pi')|&\leq\Big(\frac{1}{4}-\frac{\alpha}{10^{9}}
\Big)\log(C(\pi)^{n'}C(\pi')^{n}(3+|t|)^{n'n})\\
&+\frac{\alpha}{10^7}N_{\pi\times\pi'}(1-\alpha,|t|+6)+2\log|L(\tfrac{3}{2}+it,\pi\times\pi')|+\Cr{kappa_const}.
\end{align*}
In particular, the following bounds hold:
\begin{equation}
\label{eqn:convexity}
\begin{aligned}
L(\tfrac{1}{2}+it,\pi)&\ll_{n,n'} C(\pi)^{\frac{1}{4}}(3+|t|)^{\frac{n}{4}}\\
L(\tfrac{1}{2}+it,\pi\times\pi')&\ll_{n,n',\epsilon} (C(\pi)^{n'}C(\pi')^n)^{\frac{1}{4}}(3+|t|)^{\frac{n'n}{4}+\epsilon}.
\end{aligned}
\end{equation}
\end{lemma}
\begin{proof}
For the bounds on the logarithm, it suffices for us to consider $L(\frac{1}{2}+it,\pi\times\pi')$, since all of the results for $L(\frac{1}{2}+it,\pi)$ would then follow by choosing $\pi'\in\mathfrak{F}_1$ to be trivial.  We mimic the proof of \cite[Theorem 1.1]{ST}, replacing $\pi'$ with $\pi' \otimes \left|\det\right|^{it}$.  This has the effect of adding $it$ to each Langlands parameter $\mu_{\pi\times\pi'}(j,j')$, which, after an application of \eqref{eqn:BH}, yields
\begin{align*}
\log|L(\tfrac{1}{2}+it,\pi\times\pi')|&\leq\Big(\frac{1}{4}-\frac{\alpha}{10^{9}}\Big)\log C(\pi\times\pi',t) +2\log|L(\tfrac{3}{2}+it,\pi\times\pi')|\\
&+ \frac{\alpha}{10^{7}} |\{\rho=\beta+i\gamma\colon \beta\geq1-\alpha,~|\gamma-t|\leq 6\}|+O_{n,n'}(1)\\
&\leq\Big(\frac{1}{4}-\frac{\alpha}{10^{9}}\Big)\log C(\pi\times\pi',t) +2\log|L(\tfrac{3}{2}+it,\pi\times\pi')|\\
&+ \frac{\alpha}{10^{7}} N_{\pi\times\pi'}(1-\alpha,|t|+6)+O_{n,n'}(1)\\
&\leq\Big(\frac{1}{4}-\frac{\alpha}{10^{9}}\Big)\log(C(\pi)^{n'}C(\pi')^{n} (3+|t|)^{n'n}) +2\log|L(\tfrac{3}{2}+it,\pi\times\pi')|\\
&+ \frac{\alpha}{10^{7}} N_{\pi\times\pi'}(1-\alpha,|t|+6)+O_{n,n'}(1).
\end{align*}

If $\pi'$ is trivial and $\alpha=0$, then the above estimate shows that
\[
L(\tfrac{1}{2}+it,\pi)\ll_{n}C(\pi)^{\frac{1}{4}}(3+|t|)^{\frac{n}{4}}|L(\tfrac{3}{2}+it,\pi)|^2.
\]
The bound $|L(\tfrac{3}{2}+it,\pi)|^2\ll_n 1$ follows from the bound $|\alpha_{j,\pi}(p)|\leq p^{1/2-1/(n^2+1)}$ \cite{LRS,MS}.  If $\pi'$ is nontrivial and $\alpha=0$, then the above work and \eqref{eqn:BH} show that
\[
L(\tfrac{1}{2}+it,\pi\times\pi')\ll_{n,n'}(C(\pi)^{n'}C(\pi')^n)^{\frac{1}{4}}(3+|t|)^{\frac{n'n}{4}}|L(\tfrac{3}{2}+it,\pi)|^2.
\]
Let $\epsilon>0$.  The bound
\begin{equation}
\label{eqn:Li}
|L(\tfrac{3}{2}+it,\pi\times\pi')|\ll_{n,n',\epsilon}(C(\pi)C(\pi'))^{\epsilon}
\end{equation}
follows from \cite[Theorem 2]{Li}.
\end{proof}

\begin{proposition}
\label{prop:subconvexity}
Let $n,n'\in\{1,2,3,4\}$ and $\pi'\in\mathfrak{F}_{n'}(Q^{1/11})$.  If $\epsilon>0$, then with $O_{\epsilon}(Q^{\epsilon})$ exceptions, each $\pi\in\mathfrak{F}_n(Q)$ satisfies
\[
|L(\tfrac{1}{2} + it,\pi)|\leq (C(\pi) (3+|t|)^{n})^{\frac{1}{4}-\frac{6\epsilon}{10^{12}}}\qquad\text{for all $t\in[-Q^{\frac{1}{11}},Q^{\frac{1}{11}}]$.}
\]
and
\[
|L(\tfrac{1}{2} + it,\pi\times\pi')|\leq (C(\pi)^{n'}C(\pi')^n (3+|t|)^{n'n})^{\frac{1}{4}-\frac{6\epsilon}{10^{12}}}\qquad\text{for all $t\in[-Q^{\frac{1}{11}},Q^{\frac{1}{11}}]$.}
\]
\end{proposition}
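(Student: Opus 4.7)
The plan is to combine the zero density estimate \cref{lem:ZDE} with the pointwise $L$-value bound \cref{lem:central_values}, calibrated so that a single choice of parameter simultaneously produces the target savings exponent and the target exceptional-set size. Specifically, I would apply \cref{lem:central_values} with $\alpha = \epsilon/150$; the identity
\[
\frac{9\alpha}{10^{10}} = \frac{9\epsilon}{150 \cdot 10^{10}} = \frac{6\epsilon}{10^{12}}
\]
shows that this is precisely the choice that produces the stated main-term exponent. This yields, for every $\pi \in \mathfrak{F}_n(Q)$ and every $t \in \R$,
\begin{align*}
\log|L(\tfrac{1}{2}+it,\pi\times\pi')| &\le \Bigl(\tfrac{1}{4} - \tfrac{6\epsilon}{10^{12}}\Bigr)\log\bigl(C(\pi)^{n'}C(\pi')^n(3+|t|)^{n'n}\bigr)\\
&\quad + \frac{\epsilon}{150\cdot 10^7}\,N_{\pi\times\pi'}\bigl(1-\tfrac{\epsilon}{150},|t|+6\bigr) + \Cr{kappa_const}(\tfrac{\epsilon}{150}).
\end{align*}

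Next, I would handle the uniformity in $t \in [-Q^{1/11},Q^{1/11}]$ via the monotonicity of the zero count in the height: for such $t$,
\[
N_{\pi\times\pi'}(1-\tfrac{\epsilon}{150},|t|+6) \le N_{\pi\times\pi'}(1-\tfrac{\epsilon}{150},Q^{1/11}+6),
\]
so it suffices to control the zero count at the maximal height. Setting $T = Q^{1/11}+6$, which lies in the admissible range of \cref{lem:ZDE}, that lemma gives
\[
\sum_{\pi\in\mathfrak{F}_n(Q)} N_{\pi\times\pi'}\bigl(1-\tfrac{\epsilon}{150},T\bigr) \ll_\epsilon Q^\epsilon.
\]
Define the exceptional set $E \coloneqq \{\pi \in \mathfrak{F}_n(Q) : N_{\pi\times\pi'}(1-\tfrac{\epsilon}{150},Q^{1/11}+6) \ge 1\}$; since each element of $E$ contributes at least $1$ to the sum, $|E| \ll_\epsilon Q^\epsilon$. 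For $\pi \notin E$ and $t$ in the prescribed range, the zero-count term in the display above vanishes, and exponentiation delivers the claimed subconvex bound up to an $\epsilon$-dependent multiplicative constant $e^{\Cr{kappa_const}(\epsilon/150)}$. This residual constant is absorbed by a minor adjustment: either a slight reduction of the savings exponent (applying the argument with $\epsilon/2$ in place of $\epsilon$), or by adding to $E$ the $O_\epsilon(1)$ representations whose analytic conductor $C(\pi)$ lies below an $\epsilon$-dependent threshold, a set that is finite by the counting bound $|\mathfrak{F}_n(Q)| \ll_\epsilon Q^{2n+1/4}$ from \cite[Appendix]{BTZ}; either modification preserves the bound $|E| \ll_\epsilon Q^\epsilon$.

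There is no genuine obstacle here; the principal thing to get right is the dual role of the parameter $\alpha = \epsilon/150$, chosen so that a single value matches both the target savings in \cref{lem:central_values} and the normalization in \cref{lem:ZDE} that produces exactly $Q^\epsilon$. Once this alignment is secured, the $t$-uniformity is automatic from monotonicity of $N_{\pi\times\pi'}$ in the height, made convenient by the fact that the admissible height range $T \le Q^{1/11}+6$ of \cref{lem:ZDE} matches precisely the range $|t| \le Q^{1/11}$ in the conclusion; the constant absorption is then cosmetic.
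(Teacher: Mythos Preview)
Your proposal is correct and follows the same approach as the paper: the paper's proof is a single line stating that the result follows immediately from \cref{lem:ZDE,lem:central_values} with $\alpha=\epsilon/150$, and you have simply fleshed out the details of that deduction. Your handling of the $t$-uniformity via monotonicity of the zero count and your remark on absorbing the constant $e^{\Cr{kappa_const}(\epsilon/150)}$ are reasonable elaborations of what the paper leaves implicit.
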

\begin{proof}
This follows immediately from \cref{lem:ZDE,lem:central_values} with $\alpha=\epsilon/150$ and \eqref{eqn:Li} (with $\epsilon$ rescaled to $\epsilon/10^{10}$).
\end{proof}

\section{Hybrid-aspect quantum ergodicity for \texorpdfstring{$\GL_2$}{GL\9040\202} Hecke--Maa\ss{} newforms}
\label{sec:QE}

Let $\varphi$ be a Hecke--Maa\ss{} newform on $\Gamma_0(q_{\varphi})\backslash\mathbb{H}$ with trivial nebentypus and Laplace eigenvalue $\lambda_{\varphi}=\frac{1}{4}+t_{\varphi}^2>0$.  Then $\varphi$ is an eigenfunction of the hyperbolic Laplacian $\Delta$, all of the Hecke operators, and the involution $T_{-1}$ sending $\varphi(z)$ to $\varphi(-\bar{z})$.  The eigenvalue $W_{\varphi}$ of $\varphi$ for $T_{-1}$ is either $1$ or $-1$, leading to the respective Fourier expansions
\begin{equation}
\label{eqn:Fourier}
\begin{aligned}
	\varphi(x+iy)&=\rho(\varphi)\sqrt{y}\sum_{m=1}^{\infty}\lambda_{\varphi}(m)K_{it_{\varphi}}(2\pi my)\cos(2\pi mx),\\
	\varphi(x+iy)&=\rho(\varphi)\sqrt{y}\sum_{m=1}^{\infty}\lambda_{\varphi}(m)K_{it_{\varphi}}(2\pi my)\sin(2\pi nx),
\end{aligned}
\end{equation}
where $\rho(\varphi)$ is a positive normalizing constant. We consider the family
\[
\mathpzc{F}(Q)\coloneqq \{\varphi\colon\lambda_{\varphi}q_{\varphi} \in [Q,2Q]\}.
\]
The estimate
\begin{equation}
\label{eqn:F(Q)_bound}
|\mathpzc{F}(Q)|\asymp Q^2
\end{equation}
follows from the work of Brumley and Mili{\'c}evi{\'c} in \cite{BM}.

Let $(\varphi_{j})_{j=1}^{\infty}$ be the sequence of Hecke--Maa\ss{} cusp forms on $\Gamma\backslash\mathbb{H}$ normalized to have Petersson norm 1, and let $E(\cdot,\frac{1}{2}+it)$ denote a real-analytic Eisenstein series.  With $z=x+iy$, let
\[
\langle f,g\rangle_{q} \coloneqq  \int_{\Gamma_0(q)\backslash\mathbb{H}}f(z)\bar{g(z)}  \,d\mu(z)
\]
be the level $q$ Petersson inner product.  For $H\in C_b(\Gamma\backslash\mathbb{H})$ and a Hecke--Maa\ss{} newform $\varphi$ on $\Gamma_0(q_{\varphi})\backslash\mathbb{H}$, we define
\[
\mu_{\varphi}(H)=\int_{\Gamma_0(q_{\varphi})\backslash\mathbb{H}}|\varphi(z)|^2 H(z)  \, d\mu(z)=\langle H,|\varphi|^2\rangle_{q_{\varphi}}.
\]
We always consider $\varphi$ to be normalized so that $\mu_{\varphi}$ is a probability measure on $\Gamma\backslash\H$.  Subject to this normalization, we take the positive constant $\rho(\varphi)$ in \eqref{eqn:Fourier} to be such that $\lambda_{\varphi}(1)=1$.

\subsection{Preliminaries}

Let $B( w ,r)$ be an injective geodesic ball on $\Gamma\backslash\mathbb{H}$ of radius $r$ centered at $ w \in \Gamma\backslash\H$, and let $\mathbf{1}_{B( w ,r)}$ be its indicator function.  We will study the discrepancy
\[
D(\mu_{\varphi})=\sup_{\substack{B( w ,r)\subseteq \Gamma\backslash\mathbb{H} \\ r>0,~ w \in\Gamma\backslash\H}}\Big|\mu_{\varphi}(\mathbf{1}_{B( w ,r)})-\frac{3}{\pi}\mu(\mathbf{1}_{B( w ,r)})\Big|.
\]
Choose $T\geq e$ and define
\begin{align*}
D^{T}(\mu_{\varphi}) &\coloneqq  \sup_{\substack{B( w ,r)\subseteq \Gamma\backslash\mathbb{H} \\  w \in \Gamma\backslash\H,~\im( w )\geq 2T}}\Big|\mu_{\varphi}(\mathbf{1}_B) - \frac{3}{\pi}\mu(\mathbf{1}_B)\Big|,\\
D_{T}(\mu_{\varphi}) &\coloneqq  \sup_{\substack{B( w ,r)\subseteq \Gamma\backslash\mathbb{H} \\  w \in \Gamma\backslash\H,~\im( w )< 2T}}\Big|\mu_{\varphi}(\mathbf{1}_B) - \frac{3}{\pi}\mu(\mathbf{1}_B)\Big|.
\end{align*}
Note that $D(\mu_{\varphi})=\max\{D_T(\mu_{\varphi}),D^T(\mu_{\varphi})\}$.  We first bound $D^T(\mu_{\varphi})$ using the work of Soundararajan \cite{Sound_QUE_Maass}.

\begin{lemma}
	\label{lem:Sound}
	Let $\varphi\in\mathpzc{F}(Q)$.  If $T\geq e$, then $D^T(\mu_{\varphi})\ll (\log T)/\sqrt{T}$.
\end{lemma}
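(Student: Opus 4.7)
The strategy is to control $D^T(\mu_\varphi)$ by bounding $\mu_\varphi(\mathbf{1}_{B})$ and $\frac{3}{\pi}\mu(\mathbf{1}_B)$ \emph{individually} for any injective ball $B = B(w,r)$ with $\operatorname{Im}(w) \geq T$, and then applying the triangle inequality. This approach succeeds precisely because balls high up in the cusp occupy little hyperbolic volume, and simultaneously carry little $L^2$-mass of $\varphi$ uniformly in the parameters $q_\varphi$ and $\lambda_\varphi$.

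\textbf{Step 1: geometric containment.} Since $\begin{pmatrix}1 & 1 \\ 0 & 1\end{pmatrix} \in \Gamma_0(q_\varphi)$, the injectivity radius at a point of imaginary part $Y$ in $\Gamma_0(q_\varphi)\backslash \H$ is at most $\operatorname{arcsinh}(1/(2Y)) \ll 1/Y$. Hence an injective geodesic ball $B(w,r)$ with $\operatorname{Im}(w) \geq T$ satisfies $r \ll 1/T$, and lifts to a Euclidean ball of radius $\ll 1$ in the strip model, so it is contained in the cuspidal region $\mathcal{C}_T \coloneqq \{z \in \mathcal{F}_{q_\varphi} : \operatorname{Im}(z) \geq T/2\}$ of any standard fundamental domain $\mathcal{F}_{q_\varphi}$ containing the cusp at infinity with width $1$. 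Since $\mu(\mathcal{C}_T) \leq \int_{T/2}^\infty \int_0^1 y^{-2}\, dx\, dy = 2/T$, we immediately get $\tfrac{3}{\pi}\mu(\mathbf{1}_B) \ll 1/T \ll (\log T)/\sqrt{T}$.

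\textbf{Step 2: cusp mass of $\varphi$.} The content of the lemma is the uniform bound
\[
\mu_\varphi(\mathcal{C}_T) = \int_{\mathcal{C}_T} |\varphi(z)|^2\, d\mu(z) \ll \frac{\log T}{\sqrt{T}},
\]
which is exactly Soundararajan's cusp estimate from \cite{Sound_QUE_Maass}, transplanted from level $1$ to level $q_\varphi$. I would derive it by substituting the Fourier expansion \eqref{eqn:Fourier} into $\int_{\mathcal{C}_T} |\varphi|^2\, d\mu$ and applying Parseval in the $x$-variable to obtain
\[
\mu_\varphi(\mathcal{C}_T) \asymp |\rho(\varphi)|^2 \sum_{m\geq 1} |\lambda_\varphi(m)|^2 \int_{T/2}^\infty K_{it_\varphi}(2\pi m y)^2\, \frac{dy}{y},
\]
then splitting the $m$-sum at $m \asymp |t_\varphi|/T$ so that the $K$-Bessel function transitions between its oscillatory and exponentially decaying regimes. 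Partial summation against the Rankin--Selberg estimate $\sum_{m\leq X}|\lambda_\varphi(m)|^2 \ll X(q_\varphi\lambda_\varphi)^{o(1)}$, together with the Hoffstein--Lockhart lower bound $|\rho(\varphi)|^2 \gg (\cosh(\pi t_\varphi) L(1,\operatorname{Ad}\varphi))^{-1}$ (and Siegel's ineffective lower bound for $L(1,\operatorname{Ad}\varphi)$), produce the stated polynomial savings in $T$ that is uniform in $q_\varphi$ and $\lambda_\varphi$. The passage from level $1$ to level $q_\varphi$ is harmless since the infinity cusp has width $1$ in every $\Gamma_0(q_\varphi)\backslash \H$ and the Rankin--Selberg bounds involved are level-uniform for newforms.

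\textbf{Conclusion and obstacle.} Combining Steps 1 and 2 yields $\mu_\varphi(\mathbf{1}_B) \leq \mu_\varphi(\mathcal{C}_T) \ll (\log T)/\sqrt{T}$ and $\tfrac{3}{\pi}\mu(\mathbf{1}_B) \ll 1/T$, and the triangle inequality then gives $|\mu_\varphi(\mathbf{1}_B) - \tfrac{3}{\pi}\mu(\mathbf{1}_B)| \ll (\log T)/\sqrt{T}$ uniformly in $B$, proving the lemma. The only real obstacle is ensuring that the cusp-mass bound in Step 2 is genuinely uniform in $q_\varphi$ and $\lambda_\varphi$; this is the reason the implied constant throughout the paper is ineffective, since it ultimately rests on a Siegel-type lower bound for $L(1,\operatorname{Ad}\varphi)$ in the normalization of $\rho(\varphi)$.
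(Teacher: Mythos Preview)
Your overall strategy---control $\mu_\varphi(\mathbf{1}_B)$ and $\tfrac{3}{\pi}\mu(\mathbf{1}_B)$ separately and apply the triangle inequality---matches the paper's. Step~1 is fine (the paper actually gets $O(T^{-2})$ for the volume term, but your $O(T^{-1})$ is more than enough).

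The gap is in Step~2. Your proposed route through the Rankin--Selberg upper bound $\sum_{m\leq X}|\lambda_\varphi(m)|^2\ll X(q_\varphi\lambda_\varphi)^{o(1)}$ and the Hoffstein--Lockhart/Siegel lower bound for $L(1,\Ad\varphi)$ would leave you with a stray factor of $(q_\varphi\lambda_\varphi)^{\epsilon}$, so you would prove only $D^T(\mu_\varphi)\ll_\epsilon(q_\varphi\lambda_\varphi)^{\epsilon}(\log T)/\sqrt{T}$ rather than the lemma as stated. The paper avoids this entirely by using Soundararajan's \emph{self-normalizing} trick: after Parseval one has
\[
\int_{\substack{|x|\leq 1/2 \\ y\geq T}}|\varphi|^2\,\frac{dx\,dy}{y^2}=\frac{\rho(\varphi)^2}{2}\int_1^\infty |K_{it_\varphi}(2\pi t)|^2\sum_{m\leq t/T}|\lambda_\varphi(m)|^2\,\frac{dt}{t},
\]
and then the purely multiplicative inequality (Soundararajan, \cite[Theorem~3]{Sound_QUE_Maass}, which uses only the Hecke relations)
\[
\sum_{m\leq x/y}|\lambda_\varphi(m)|^2\ll\frac{\log(ey)}{\sqrt{y}}\sum_{m\leq x}|\lambda_\varphi(m)|^2
\]
lets one compare this directly to the same integral with $T$ replaced by $1$, which is exactly $\mu_\varphi(\{|x|\leq\tfrac12,\ y\geq 1\})\leq 1$ since $\mu_\varphi$ is a probability measure. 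No bound on $\rho(\varphi)$ or $L(1,\Ad\varphi)$ is ever invoked.

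Consequently your diagnosis in the final paragraph is mistaken: the paper's proof of this lemma is fully effective and does not touch Siegel or Hoffstein--Lockhart. The ineffectivity in \cref{thm:QE} enters later, through the lower bound \eqref{eqn:HL} on $L(1,\Ad\varphi)^{-1}$ in \cref{lem:discrete_spectrum} for dihedral $\varphi$, not through this cusp estimate.
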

\begin{proof}
Consider an injective geodesic ball $B( w ,r)$ with $\im( w )\geq 2T$.  Observe that
\begin{align*}
\Big|\mu_{\varphi}(\mathbf{1}_{B( w ,r)})-\frac{3}{\pi}\mu(\mathbf{1}_{B( w ,r)})\Big|\leq \mu_{\varphi}(\mathbf{1}_{B( w ,r)})+\frac{3}{\pi}\mu(\mathbf{1}_{B( w ,r)})\leq \mu_{\varphi}(\mathbf{1}_{B( w ,r)})+O(T^{-2})
\end{align*}
and
\[
\mu_{\varphi}(\mathbf{1}_{B( w ,r)})\ll \int_{\substack{|x|\leq\frac{1}{2} \\ y\geq T}}|\varphi(x+iy)|^2  \, \frac{dx \, dy}{y^2}.
\]
We expand $\varphi$ according to \eqref{eqn:Fourier} and apply Parseval's identity to obtain
\[
\int_{\substack{|x|\leq\frac{1}{2} \\ y\geq T}}|\varphi(x+iy)|^2  \, \frac{dx \, dy}{y^2}=\frac{\rho(\varphi)^2}{2}\int_1^{\infty}|K_{ir}(2\pi t)|^2\sum_{m\leq t/T}|\lambda_{\varphi}(m)|^2 \, \frac{dt}{t}.
\]

Since $\varphi$ is has trivial nebentypus, the Hecke relations \cite[(0.3)]{HL} give us
\[
\lambda_{\varphi}(m_1)\lambda_{\varphi}(m_2)=\sum_{\substack{d|\gcd(m_1,m_2) \\ \gcd(d,q)=1}}\lambda_{\varphi}\Big(\frac{m_1 m_2}{d^2}\Big).
\]
Consequently, for integers $m,m_1,m_2\geq 1$ and a prime $p$, we have the bounds
\begin{align*}
	&|\lambda_{\varphi}(p)|^2\leq 1+|\lambda_{\varphi}(p^2)|,\qquad |\lambda_{\varphi}(mp^2)|\leq|\lambda_{\varphi}(p^2)\lambda_{\varphi}(m)|+|\lambda_{\varphi}(m)|+\Big|\lambda_{\varphi}\Big(\frac{m}{p^2}\Big)\Big|,\\
	&|\lambda_{\varphi}(m_1)\lambda_{\varphi}(m_2)|\leq \sum_{d|\gcd(m_1,m_2)}\Big|\lambda_{\varphi}\Big(\frac{m_1 m_2}{d^2}\Big)\Big|,\qquad 
	|\lambda_{\varphi}(mp)|\leq |\lambda_{\varphi}(m)\lambda_{\varphi}(p)|+\Big|\lambda_{\varphi}\Big(\frac{m}{p}\Big)\Big|.
\end{align*}
With these inequalities along with the multiplicative structure of the Hecke eigenvalues $\lambda_{\varphi}(m)$, we can mimic the proof of \cite[Theorem 3]{Sound_QUE_Maass} and conclude that
\[
\sum_{m\leq x/y}|\lambda_{\varphi}(m)|^2\ll\frac{\log(ey)}{\sqrt{y}}\sum_{m\leq x}|\lambda_{\varphi}(m)|^2,\qquad 1\leq y\leq x,
\]
hence
\begin{align*}
	\frac{\rho(\varphi)^2}{2}\int_{1}^{\infty}|K_{ir}(2\pi t)|^2\sum_{m\leq t/T}|\lambda_{\varphi}(m)|^2 \, \frac{dt}{t}&\ll \frac{\log T}{\sqrt{T}}\frac{\rho(\varphi)^2}{2}\int_1^{\infty}|K_{ir}(2\pi t)|^2\sum_{m\leq t}|\lambda_{\varphi}(m)|^2 \, \frac{dt}{t}\\
	&=\frac{\log T}{\sqrt{T}}\int_{\substack{|x|\leq \frac{1}{2} \\ y\geq 1}}|\varphi(x+iy)|^2 \, \frac{dx \, dy}{y^2}\\
	&=\frac{\log T}{\sqrt{T}}\mu_{\varphi}(\{z\in\mathbb{C}\colon |\re(z)|\leq \tfrac{1}{2},~\im(z)\geq 1\}).
\end{align*}
Since $\varphi$ is normalized so that $\mu_{\varphi}$ is a probability measure on $\Gamma\backslash\H$ and there exists a fundamental domain of $\Gamma_0(q) \backslash \H$ containing the set $\{z\in\mathbb{C}\colon |\re(z)|\leq \tfrac{1}{2},~\im(z)\geq 1\}$, the preceding display is $\ll (\log T)/\sqrt{T}$.  The result follows.
\end{proof}

We next bound $D_T(\mu_{\varphi})$. Our first step in this regard is to bound this in terms of a spectral expansion on $\Gamma \backslash \mathbb{H}$ in terms of Hecke--Maa\ss{} cusp forms $\varphi_k$ on $\Gamma \backslash \mathbb{H}$ with spectral parameter $t_k$ and Eisenstein series $E(\cdot,\tfrac{1}{2}+it)$.

\begin{lemma}
	\label{lem:discrepancy}
	Let $\varphi\in\mathpzc{F}(Q)$.  If $M,T\geq e$, then
	\[
	D_T(\mu_{\varphi})^2\ll M^{-2} + (1+M^{-3} T)\Big(\sum_{|t_k|\leq M} |\langle \varphi_k,|\varphi|^2 \rangle_{q_{\varphi}}|^2+\int_{|t|\leq M} |\langle E(\cdot,\tfrac{1}{2}+it),|\varphi|^2\rangle_{q_{\varphi}}|^2 \, dt\Big)
	\]
	with an absolute implied constant.
\end{lemma}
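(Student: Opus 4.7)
The plan is to approximate the indicator function $\mathbf{1}_{B(w,r)}$ on $\Gamma\backslash\H$ by smooth bumps, spectrally expand them, and then pair against $|\varphi|^2$ on $\Gamma_0(q_{\varphi})\backslash\H$.  Concretely, choose $\psi^{\pm}\in C_c^\infty(\Gamma\backslash\H)$ with $\psi^-\leq \mathbf{1}_{B(w,r)}\leq \psi^+$, both smooth on scale $1/M$, and arranged so that $\psi^+-\psi^-$ is supported in an annular neighborhood of $\partial B(w,r)$ of hyperbolic area $\ll 1/M$.  Since $B(w,r)$ is injective, each $\psi^{\pm}$ lifts canonically to $\Gamma_0(q_{\varphi})\backslash\H$, and one is reduced to bounding $|\mu_{\varphi}(\psi^{\pm})-\tfrac{3}{\pi}\mu(\psi^{\pm})|^2$ together with the smoothing errors $\mu_{\varphi}(\psi^+-\psi^-)$ and $\mu(\psi^+-\psi^-)$.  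Both smoothing errors are $\ll 1/M$ and, after squaring, account for the $M^{-2}$ term in the final bound.

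Next, I would spectrally decompose $\psi\coloneqq \psi^{\pm}$ on $\Gamma\backslash\H$ and pair with $|\varphi|^2$ on $\Gamma_0(q_{\varphi})\backslash\H$.  Using $\int |\varphi|^2\, d\mu = 1$ to cancel the constant term in the expansion of $\psi$ against $\tfrac{3}{\pi}\mu(\psi)$ yields the spectral identity
\[
\mu_{\varphi}(\psi) - \tfrac{3}{\pi}\mu(\psi) = \sum_k \langle\psi,\varphi_k\rangle\langle\varphi_k,|\varphi|^2\rangle_{q_{\varphi}} + \tfrac{1}{4\pi}\int_{\R}\langle\psi,E(\cdot,\tfrac{1}{2}+it)\rangle\langle E(\cdot,\tfrac{1}{2}+it),|\varphi|^2\rangle_{q_{\varphi}}\,dt.
\]
Split the sum and integral at $|t_k|=M$ and $|t|=M$ respectively.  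For the low-frequency portion, apply Cauchy--Schwarz to factor out the spectral sum appearing in the lemma, leaving $\sum_{|t_k|\leq M}|\langle\psi,\varphi_k\rangle|^2+\tfrac{1}{4\pi}\int_{|t|\leq M}|\langle\psi,E(\cdot,\tfrac{1}{2}+it)\rangle|^2\,dt$ to be estimated.  For the high-frequency portion, iterate integration by parts via $\Delta\varphi_k=(\tfrac{1}{4}+t_k^2)\varphi_k$ to write $\langle\psi,\varphi_k\rangle = (\tfrac{1}{4}+t_k^2)^{-j}\langle\Delta^j\psi,\varphi_k\rangle$; combining $\|\Delta^j\psi\|_2^2\ll M^{4j}$ (from the smoothing scale) with Parseval shows that this tail is $\ll M^{-2j}$ for any fixed $j$, absorbed into the $M^{-2}$ error.

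The main obstacle is producing the sharp coefficient $(1+M^{-3}T)$ on the low-frequency term.  A crude Parseval estimate yields $\sum_{|t_k|\leq M}|\langle\psi,\varphi_k\rangle|^2\ll\|\psi\|_2^2\ll 1$, which already produces the $1$ inside $(1+M^{-3}T)$, but not the $M^{-3}T$ term.  The latter should emerge by exploiting that $\psi$ is supported in the cuspidal strip $\im(z)\leq T+O(1)$.  I would use the Fourier expansion \eqref{eqn:Fourier} of $\varphi_k$ and the analogous Whittaker expansion of $E(\cdot,\tfrac{1}{2}+it)$ to express each inner product $\langle\psi,\varphi_k\rangle$ as a sum of integrals involving $K$-Bessel functions and Hecke eigenvalues, then apply Parseval in the Fourier variable together with $K$-Bessel asymptotics and Hecke eigenvalue sums in the spirit of \cref{lem:Sound} to extract the extra $M^{-3}T$ dependence from the high-cusp portion of the support of $\psi$.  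An analogous treatment handles the Eisenstein integral via its Whittaker expansion.
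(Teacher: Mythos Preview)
Your approach is genuinely different from the paper's: the paper follows Luo--Sarnak \cite{LS} and uses the automorphic kernel $K_r(z,w)=\sum_{\gamma\in\Gamma}k_r(\gamma z,w)$, smoothing by convolution with a mollifier whose Selberg--Harish-Chandra transform $h^{(\epsilon)}$ is bounded for $|t|\le 1/\epsilon$ and rapidly decaying beyond. After Cauchy--Schwarz the two factors are $\sum_k |h_{r\pm 2\epsilon}(t_k)\varphi_k(w)|^2|h^{(\epsilon)}(t_k)|$ and $\sum_k|\langle\varphi_k,|\varphi|^2\rangle_{q_\varphi}|^2|h^{(\epsilon)}(t_k)|$; the first is controlled by the pre-trace formula as $\|K_{r+2\epsilon}(\cdot,w)\|_2^2\ll 1+\epsilon^3 T$, and this is precisely where the factor $(1+M^{-3}T)$ comes from. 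It is an artifact of the kernel method, not an intrinsic feature of the quantity $D_T(\mu_\varphi)$. So your attempt in the final paragraph to recover $M^{-3}T$ via Bessel asymptotics and Hecke sums is misdirected: if your direct-smoothing approach works at all, Parseval gives $\sum_{|t_k|\le M}|\langle\psi,\varphi_k\rangle|^2\le\|\psi\|_2^2\ll 1$ and you would obtain the lemma with the \emph{stronger} coefficient $1$ in place of $(1+M^{-3}T)$.

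There is, however, a genuine gap in your tail estimate. You write that ``Parseval shows that this tail is $\ll M^{-2j}$,'' but Parseval on $\Gamma\backslash\H$ only controls $\sum_k|\langle\Delta^j\psi,\varphi_k\rangle|^2$, i.e.\ one side of the bilinear sum. The other side involves $\langle\varphi_k,|\varphi|^2\rangle_{q_\varphi}$, an inner product on $\Gamma_0(q_\varphi)\backslash\H$, and there is no Parseval relation for $|\varphi|^2$ against the level-one basis $\{\varphi_k\}$. To bound $\sum_{|t_k|>M}\langle\psi,\varphi_k\rangle\langle\varphi_k,|\varphi|^2\rangle_{q_\varphi}$ you need some a~priori polynomial control on $|\langle\varphi_k,|\varphi|^2\rangle_{q_\varphi}|$, for instance the sup-norm bound $\|\varphi_k\|_\infty\ll(1+|t_k|)^{1/2}$ together with $\|\varphi\|_2=1$. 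The paper's kernel approach avoids this issue elegantly because the rapidly decaying weight $|h^{(\epsilon)}(t_k)|$ sits symmetrically on \emph{both} factors after Cauchy--Schwarz, so the truncation to $|t_k|\le 1/\epsilon$ is automatic.

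One further slip: you assert $\mu_\varphi(\psi^+-\psi^-)\ll 1/M$. This is false in general, since $|\varphi|^2$ could concentrate on the annulus; fortunately you do not need it. The sandwich $\mu_\varphi(\psi^-)\le\mu_\varphi(\mathbf{1}_B)\le\mu_\varphi(\psi^+)$ shows that $|\mu_\varphi(\mathbf{1}_B)-\tfrac{3}{\pi}\mu(\mathbf{1}_B)|$ is bounded by $\max_\pm|\mu_\varphi(\psi^\pm)-\tfrac{3}{\pi}\mu(\psi^\pm)|+\tfrac{3}{\pi}\mu(\psi^+-\psi^-)$, and only the last term (involving $\mu$, not $\mu_\varphi$) needs to be $\ll 1/M$.
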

\begin{proof}
We follow the strategy in \cite[Section 5]{LS}, which we include for completeness.  Let $B( w ,r)\subseteq \Gamma\backslash\mathbb{H}$ be an injective geodesic ball centered at $ w \in\Gamma\backslash\H$ satisfying $\im( w )<2T$.  Define
\[
k_r(z, w )=\begin{dcases*}
	1& if $d(z, w )<r$,\\
	0& otherwise,
\end{dcases*}
\qquad K_r(z, w ) = \sum_{\gamma\in\Gamma}k_r(\gamma z, w ).
\]
Here
\[d(z,w) \coloneqq \log \frac{|z - \overline{w}| + |z - w|}{|z - \overline{w}| - |z - w|}\]
is the hyperbolic distance between two points $z$ and $w$ in $\H$. It follows from these definitions that $K_r(z, w )=\mathbf{1}_{B( w ,r)}(z)$.  We spectrally expand $K_r(z, w )$ using \cite[Theorem 15.7]{IK}.  If $h_r(t)$ is the Selberg--Harish-Chandra transform of $k_r(z, w )$ (see \cite[Lemma 15.6]{IK}), then
\begin{equation}
\label{eqn:unsmoothed_spectral}
K_r(z, w )=\frac{3}{\pi} h_r\Big(\frac{i}{2}\Big) + \sum_{k=1}^{\infty}h_r(t_k)\varphi_k(z)\overline{\varphi_k( w )}+\frac{1}{4\pi}\int_{\R}h_r(t)E(z,\tfrac{1}{2}+it)\overline{E( w ,\tfrac{1}{2}+it)} \, dt.
\end{equation}

We smooth the sum and the integral in \eqref{eqn:unsmoothed_spectral} as follows.  Let $\psi_{\epsilon}(z, w )$ be a nonnegative mollifier supported inside of a ball of radius $\epsilon$ with the property that $\int_{\H}\psi_{\epsilon}(z, w ) \, d\mu(z) = 1$.  We can and will choose $\psi_{\epsilon}(z, w )$ so that $\psi_{\epsilon}(z, w )\ll \epsilon^{-2}$ and its Selberg--Harish-Chandra transform $h^{(\epsilon)}$ satisfies $|h^{(\epsilon)}(t)|\ll 1$ for $|t|\leq\epsilon^{-1}$ and is rapidly decreasing for $|t|>\epsilon^{-1}$.

Given $B(w,r)$ as above, we consider $B(\zeta,r-2\epsilon)$ and $B(\zeta,r+2\epsilon)$, subject to the convention that if $r\leq 2\epsilon$, then $\mathbf{1}_{B(\zeta,r-2\epsilon)}$ is identically zero.  For a function $F(z)$ on $\Gamma\backslash\mathbb{H}$, we define
\[
\bar{\psi}_{\epsilon}(z,w)=\sum_{\gamma\in\Gamma}\psi_{\epsilon}(\gamma z,w),\qquad (F*\bar{\psi}_{\epsilon})(z)\coloneqq \int_{\Gamma\backslash\mathbb{H}}F(w)\bar{\psi}_{\epsilon}(w,z) \, d\mu(w).
\]
It follows by construction that $k_{r-2\epsilon}*\bar{\psi}_{\epsilon}(z)\leq\mathbf{1}_{B(w,r)}(z)\leq k_{r+2\epsilon}*\bar{\psi}_{\epsilon}(z)$.  These two convolutions have the following expansions per \cite[Equation 48]{LS}:
\begin{multline*}
	k_{r\pm 2\epsilon}*\bar{\psi}_{\epsilon}(z) = \frac{3}{\pi} h_{r\pm2\epsilon}\Big(\frac{i}{2}\Big) h^{(\epsilon)}\Big(\frac{i}{2}\Big) +  \sum_{k=1}^{\infty}h_{r\pm2\epsilon}(t_k)h^{(\epsilon)}(t_k)\varphi_k(z)\overline{\varphi_k(w)}\\
	+\frac{1}{4\pi}\int_{\R}h_{r\pm2\epsilon}(t)h^{(\epsilon)}(t)E(z,\tfrac{1}{2}+it)\overline{E(w,\tfrac{1}{2}+it)} \, dt.
\end{multline*}

For any $H\in L^2(\Gamma\backslash\mathbb{H})$, we by \cite[Theorem 15.5]{IK} and the definition of $\mu_{\varphi}$ that
\[
\mu_{\varphi}(H)=\frac{3}{\pi}\mu(H)+\sum_{k=1}^{\infty}\langle H,\varphi_k\rangle_1 \langle \varphi_k,|\varphi|^2\rangle_{q_{\varphi}}+\frac{1}{4\pi}\int_{-\infty}^{\infty}\langle H,E(\cdot,\tfrac{1}{2}+it)\rangle_1 \langle E(\cdot,\tfrac{1}{2}+it),|\varphi|^2\rangle_{q_{\varphi}} \, dt.
\]
Therefore, we have
\begin{multline*}
\mu_{\varphi}(k_{r\pm 2\epsilon}*\bar{\psi}_{\epsilon})=\frac{3}{\pi} h_{r\pm2\epsilon}\Big(\frac{i}{2}\Big) h^{(\epsilon)}\Big(\frac{i}{2}\Big)+\sum_{k=1}^{\infty}h_{r\pm 2\epsilon}(t_k)h^{(\epsilon)}(t_k)\overline{\varphi_k(w)} \langle \varphi_k,|\varphi|^2\rangle_{q_{\varphi}}\\
+\frac{1}{4\pi}\int_{-\infty}^{\infty}h_{r\pm 2\epsilon}(t)h^{(\epsilon)}(t)\overline{E(w,\tfrac{1}{2}+it)} \langle E(\cdot,\tfrac{1}{2}+it),|\varphi|^2\rangle_{q_{\varphi}} \, dt.
\end{multline*}
The inversion formula for the Selberg--Harish-Chandra transform implies that $h_{r\pm2\epsilon}(\frac{i}{2})$ equals $\int_{\H} \psi_{\epsilon}(z,w) \, d\mu(z) = 1$ while $h_{r\pm2\epsilon}(\frac{i}{2}) = \mu(\mathbf{1}_{B(w,r\pm2\epsilon)}) = \mu(\mathbf{1}_{B(w,r)}) + O(\epsilon)$, so
\begin{multline*}
\Big|\mu_{\varphi}(\mathbf{1}_{B(w,r)})-\frac{3}{\pi}\mu(\mathbf{1}_{B(w,r)})\Big|\ll\epsilon + \sum_{\pm}\Big|\sum_{k=1}^{\infty}h_{r\pm2\epsilon}(t_k)h^{(\epsilon)}(t_k)\overline{\varphi_k(w)}\langle \varphi_k,|\varphi|^2\rangle_{q_{\varphi}}\\
+\int_{-\infty}^{\infty}h_{r\pm 2\epsilon}(t)h^{(\epsilon)}(t)\overline{E(w,\tfrac{1}{2}+it)} \langle E(\cdot,\tfrac{1}{2}+it),|\varphi|^2\rangle_{q_{\varphi}} \, dt\Big|.
\end{multline*}
Consequently,
\begin{multline*}
	D_{T}(\mu_{\varphi})^2\ll\epsilon^2 + \sum_{\pm}\Big(\Big|\sum_{k=1}^{\infty}h_{r\pm2\epsilon}(t_k)h^{(\epsilon)}(t_k)\overline{\varphi_k(w)}\langle \varphi_k,|\varphi|^2\rangle_{q_{\varphi}}\Big|^2\\
+\Big|\int_{-\infty}^{\infty}h_{r\pm 2\epsilon}(t)h^{(\epsilon)}(t)\overline{E(w,\tfrac{1}{2}+it)} \langle E(\cdot,\tfrac{1}{2}+it),|\varphi|^2\rangle_{q_{\varphi}} \, dt\Big|^2\Big).
\end{multline*}

We first handle the contribution from the cuspidal spectrum.  Note that by the Cauchy--Schwarz inequality and our aforementioned decay properties for $h^{(\epsilon)}$, we have
\begin{multline*}
\Big|\sum_{k=1}^{\infty}h_{r\pm2\epsilon}(t_k)h^{(\epsilon)}(t_k)\overline{\varphi_k(w)}\langle \varphi_k,|\varphi|^2 \rangle_{q_{\varphi}}\Big|^2\\
\leq \Big(\sum_{k=1}^{\infty}|h_{r\pm2\epsilon}(t_k)\varphi_k(w)|^2|h^{(\epsilon)}(t_k)|\Big)\sum_{k=1}^{\infty}|\langle \varphi_k,|\varphi|^2 \rangle_{q_{\varphi}}|^2|h^{\epsilon}(t_k)|\\
\ll \Big(\sum_{|t_k|\leq 1/\epsilon}|h_{r\pm2\epsilon}(t_k)\varphi_k(w)|^2\Big)\sum_{|t_k|\leq 1/\epsilon}|\langle \varphi_k,|\varphi|^2 \rangle_{q_{\varphi}}|^2+\epsilon^2.
\end{multline*}
It follows from \eqref{eqn:unsmoothed_spectral} that if $\im(w)\leq 2T$, then
\[
\sum_{|t_k|\leq 1/\epsilon}|h_{r\pm2\epsilon}(t_k)\varphi_k(w)|^2\ll\int_{\Gamma\backslash\mathbb{H}}|K_{r+2\epsilon}(z,w)|^2 \, d\mu(z)\ll 1+\epsilon^3 T,
\]
hence
\[
\Big|\sum_{k=1}^{\infty}h_{r\pm2\epsilon}(t_k)h^{(\epsilon)}(t_k)\overline{\varphi_k(w)}\langle \varphi_k,|\varphi|^2 \rangle_{q_{\varphi}}\Big|^2\ll (1+\epsilon^3 T)\sum_{|t_k|\leq 1/\epsilon}|\langle \varphi_k,|\varphi|^2 \rangle_{q_{\varphi}}|^2.
\]
A verbatim argument for the contribution from the  continuous spectrum shows that
\begin{multline*}
\Big|\int_{-\infty}^{\infty}h_{r\pm 2\epsilon}(t)h^{(\epsilon)}(t)\overline{E(w,\tfrac{1}{2}+it)} \langle E(\cdot,\tfrac{1}{2}+it),|\varphi|^2\rangle_{q_{\varphi}} \, dt\Big|^2\\
\ll (1+\epsilon^3 T)\int_{|t|\leq 1/\epsilon}|\langle E(\cdot,\tfrac{1}{2}+it),|\varphi|^2\rangle_{q_{\varphi}}|^2 \, dt+\epsilon^2.
\end{multline*}
The lemma now follows by replacing $\epsilon$ with $1/M$.
\end{proof}

\subsection{Relating inner products to \texorpdfstring{$L$}{L}-functions}

We now relate the inner products $|\langle \varphi_k,|\varphi|^2 \rangle_{q_{\varphi}}|^2$ and $|\langle E(\cdot,\tfrac{1}{2}+it),|\varphi|^2\rangle_{q_{\varphi}}|^2$ in \cref{lem:discrepancy} to values of $L$-functions on the critical line.  Let $q_{\Ad \varphi}$ be the arithmetic conductor of the adjoint lift $\Ad \varphi$.  The positive integer $q_{\Ad \varphi}$ is a perfect square satisfying $\sqrt{q_{\Ad \varphi}} \mid q_{\varphi}$.  Moreover, we have $\sqrt{q_{\Ad \varphi}}=q_{\varphi}$ if and only if $q_{\varphi}$ is squarefree \cite[Proposition 2.5]{NPS}.

\begin{lemma}
	\label{lem:discrete_spectrum}
	Let $\epsilon'>0$, and let $W_k=W(\varphi_k)\in\{-1,1\}$ be the root number of $\varphi_k$.  We have
	\[
|\langle \varphi_k,|\varphi|^2 \rangle_{q_{\varphi}}|^2\ll_{\epsilon'} (q_{\varphi}(3+|t_{\varphi}|))^{\frac{\epsilon'}{2}} (3+|t_k|)^{\frac{\epsilon'}{2}} \frac{(1+W_k)q_{\Ad \varphi}^{-\frac{1}{2}}\Big(\frac{\sqrt{q_{\Ad \varphi}}}{q_{\varphi}}\Big)^{1-2\vartheta} L(\frac{1}{2},\Ad \varphi \times \varphi_k)}{(3+|t_k|)^{\frac{1}{2}}(3+|2t_{\varphi}-t_k|)^{\frac{1}{2}}(3+|2t_{\varphi}+t_k|)^{\frac{1}{2}}},
\]
where $\vartheta \in [0,\frac{1}{2})$ is the best known exponent towards the generalized Ramanujan conjecture.
\end{lemma}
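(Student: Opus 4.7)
The strategy is to invoke the Ichino--Watson triple product formula in the explicit level-aspect form established by Nelson--Pitale--Saha \cite{NPS}. As a preliminary, note that $|\varphi|^2$ is invariant under the involution $T_{-1}\colon z\mapsto -\bar z$, so the inner product $\langle\varphi_k,|\varphi|^2\rangle_{q_\varphi}$ vanishes whenever $\varphi_k$ is odd (i.e., $W_k=-1$); this produces the factor $(1+W_k)$ in the claimed bound, and we may assume $W_k=+1$ from now on, so that $\varphi_k$ is real-valued and $\langle\varphi_k,|\varphi|^2\rangle_{q_\varphi}$ coincides with the triple product integral $\int \varphi\,\bar\varphi\,\varphi_k\,d\mu$ of $(\varphi,\bar\varphi,\varphi_k)$.

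Using the $\GL_2$ decomposition $\pi_\varphi\otimes\tilde\pi_\varphi\cong\Ad\varphi\oplus\mathbf{1}$, the triple product $L$-function factors as
\[
L(s,\varphi\times\bar\varphi\times\varphi_k)=L(s,\Ad\varphi\times\varphi_k)\,L(s,\varphi_k).
\]
Applying Ichino's formula to $(\varphi,\bar\varphi,\varphi_k)$ and invoking the explicit evaluation of the local integrals in \cite{NPS} at every place yields, under our normalization $\|\varphi\|_{q_\varphi}^2=1$, an identity of the form
\[
|\langle\varphi_k,|\varphi|^2\rangle_{q_\varphi}|^2=C_\infty(t_\varphi,t_k)\,C_{\mathrm{fin}}(q_\varphi,q_{\Ad\varphi})\,\frac{L(\tfrac{1}{2},\Ad\varphi\times\varphi_k)\,L(\tfrac{1}{2},\varphi_k)}{L(1,\Ad\varphi)^2\,L(1,\Ad\varphi_k)}.
\]
The finite-place computation of \cite{NPS}, in which the discrepancy between $q_\varphi$ and $\sqrt{q_{\Ad\varphi}}$ and the Kim--Sarnak exponent $\vartheta$ enter at primes $p\mid q_\varphi$ with $p^2\mid q_\varphi$, produces
\[
C_{\mathrm{fin}}(q_\varphi,q_{\Ad\varphi})\ll q_{\Ad\varphi}^{-1/2}\bigl(\sqrt{q_{\Ad\varphi}}/q_\varphi\bigr)^{1-2\vartheta},
\]
while the archimedean local integral, computed by Stirling's formula applied to the Langlands parameters $\{0,\pm 2it_\varphi\}$ of $\Ad\varphi$ and $\{\pm it_k\}$ of $\varphi_k$ (as in the weight-zero Maa\ss{} calculations of Watson and Bernstein--Reznikov), yields
\[
C_\infty(t_\varphi,t_k)\asymp(3+|t_k|)^{-1}(3+|2t_\varphi-t_k|)^{-1/2}(3+|2t_\varphi+t_k|)^{-1/2}.
\]

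To complete the argument, I would absorb the remaining factors into the prefactor $(q_\varphi(3+|t_\varphi|))^{\epsilon'/2}(3+|t_k|)^{\epsilon'/2}$. The convexity bound gives $L(\tfrac{1}{2},\varphi_k)\ll_{\epsilon'}(3+|t_k|)^{1/2+\epsilon'/8}$, which pairs with one factor of $(3+|t_k|)^{-1/2}$ drawn from $C_\infty$ to contribute at most $(3+|t_k|)^{\epsilon'/8}$; the Hoffstein--Lockhart lower bounds (effective away from the dihedral case, where Siegel's ineffective bound must be used) give $L(1,\Ad\varphi)\gg_{\epsilon'}(q_\varphi(3+|t_\varphi|))^{-\epsilon'/16}$ and $L(1,\Ad\varphi_k)\gg_{\epsilon'}(3+|t_k|)^{-\epsilon'/16}$. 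Assembling these estimates yields the stated inequality. The main obstacle is the delicate finite-place local analysis at primes dividing $q_\varphi$, where the ratio $q_\varphi/\sqrt{q_{\Ad\varphi}}$ records the failure of $q_\varphi$ to be squarefree and the parameter $\vartheta$ controls the residual local Hecke eigenvalues; however, since this computation is already carried out in \cite{NPS}, our task reduces to matching their output with the archimedean Stirling estimate and the auxiliary $L$-value bounds.
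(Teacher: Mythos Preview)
Your proposal is correct and follows essentially the same route as the paper: invoke the explicit Ichino--Watson formula in the form of Nelson--Pitale--Saha \cite{NPS}, extract the finite-place factor $q_{\Ad\varphi}^{-1/2}(\sqrt{q_{\Ad\varphi}}/q_\varphi)^{1-2\vartheta}$ (with divisor-type constants absorbed into $q_\varphi^{\epsilon'}$), compute the archimedean gamma ratio via Stirling, apply the convexity bound to $L(\tfrac12,\varphi_k)$, and use the Hoffstein--Lockhart lower bounds for $L(1,\Ad\varphi)$ and $L(1,\Ad\varphi_k)$. The only cosmetic difference is that you deduce the $(1+W_k)$ factor from the $T_{-1}$-invariance of $|\varphi|^2$, whereas the paper phrases this as the vanishing of the archimedean normalized local integral when $\varphi_k$ is odd; these are two expressions of the same parity obstruction.
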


\begin{remark}
The value $\vartheta = \frac{7}{64}$ is admissible by work of Kim and Sarnak \cite[Appendix 2]{Kim}, so that if $\varphi$ is any Hecke--Maa{\ss} newform and $p$ is any prime, then $|\lambda_{\varphi}(p)| \leq p^{7/64} + p^{-7/64}$.  We also point out that $L(\tfrac{1}{2},\Ad \varphi \times \varphi_k)$ is nonnegative via work of Lapid \cite{Lap03}.
\end{remark}

\begin{proof}
Nelson, Pitale, and Saha \cite[Corollary 2.8, Theorem 3.1, and Proposition 3.3]{NPS} proved that
\[
|\langle \varphi_k,|\varphi|^2 \rangle_{q_{\varphi}}|^2 \leq (1+W_k)\frac{10^{5\omega(q_{\varphi}/\sqrt{q_{\Ad \varphi}})}}{8q_{\varphi}} \frac{\Lambda(\frac{1}{2},\Ad \varphi \times \varphi_k)\Lambda(\frac{1}{2},\varphi_k)}{\Lambda(1,\Ad \varphi )^2 \Lambda(1,\Ad \varphi_k)}\tau\Big(\frac{q_{\varphi}}{\sqrt{q_{\Ad \varphi}}}\Big)^2 \Big(\frac{q_{\varphi}}{\sqrt{q_{\Ad \varphi}}}\Big)^{2\vartheta},
\]
where $\omega(n)$ is the number of prime divisors of $n$ and $\tau(n)$ is the number of divisors of $n$.  While they state their results in the case where $\varphi$ is in fact a holomorphic cuspidal newform of weight $k$, level $q$, and trivial nebentypus, their calculations are purely local.  Therefore, their result carries over to Hecke--Maa\ss{} newforms having trivial nebentypus without any changes.  This uses \cite[Theorem 2]{Watson} to show that the archimedean normalized local integral $I_{\infty}^{\ast}$ in \cite[Theorem 3.1]{NPS} equals $1$ if $\varphi_k$ is even and $0$ if $\varphi_k$ is odd.

In \cite{HL}, it is shown that for all $\epsilon'>0$, we have
\begin{equation}
\label{eqn:HL}
L(1,\Ad \varphi )^{-1}\ll_{\epsilon'} (q_{\varphi}(3+|t_{\varphi}|))^{\frac{\epsilon'}{2}},\qquad L(1,\Ad \varphi_k)^{-1}\ll_{\epsilon'} (3+|t_k|)^{\frac{\epsilon'}{2}}.
\end{equation}
If $\varphi$ is dihedral, then the upper bound on $L(1,\Ad \varphi)^{-1}$ is ineffective because it relies on Siegel's ineffective upper bound on $L(1,\chi)^{-1}$, where $\chi$ is a primitive quadratic Dirichlet character.  The lemma now follows from the definition of $q_{\Ad \varphi}$, Stirling's formula (see \cite{MR3647437} for a similar computation), and the convexity bound for $L(\frac{1}{2},\varphi_k)$.
\end{proof}

\begin{lemma}
\label{lem:continuous_spectrum}
If $t\in\R$ and $\epsilon'>0$, then
\[
|\langle E(\cdot,\tfrac{1}{2}+it),|\varphi|^2\rangle_{q_{\varphi}}|^2
\ll_{\epsilon'} (q_{\varphi}(3+|t_{\varphi}|))^{\frac{\epsilon'}{2}} (3+|t|)^{\frac{\epsilon'}{2}} \frac{q_{\Ad \varphi}^{-\frac{1}{2}}\Big(\frac{\sqrt{q_{\Ad \varphi}}}{q_{\varphi}}\Big)^{1-2\vartheta}|L(\frac{1}{2} + it,\Ad \varphi)|^2}{(3+|t|)^{\frac{1}{2}} (3+|2t_{\varphi}-t|)^{\frac{1}{2}} (3+|2t_{\varphi}+t|)^{\frac{1}{2}}}.
\]
\end{lemma}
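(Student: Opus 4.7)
The proof should run almost verbatim to that of \cref{lem:discrete_spectrum}, the main change being that the cuspidal test form $\varphi_k$ is replaced by the Eisenstein series $E(\cdot,\tfrac{1}{2}+it)$. The starting point is the same triple product identity of Ichino used by Nelson, Pitale, and Saha in \cite{NPS}, which applies uniformly to the cuspidal and continuous spectra. Specializing the formula in \cite[Corollary 2.8, Theorem 3.1, Proposition 3.3]{NPS} to the Eisenstein case yields
\[
|\langle E(\cdot,\tfrac{1}{2}+it),|\varphi|^2\rangle_{q_{\varphi}}|^2 \leq \frac{10^{5\omega(q_{\varphi}/\sqrt{q_{\Ad\varphi}})}}{8 q_{\varphi}}\, \frac{\Lambda(\tfrac{1}{2},\Ad\varphi\times E_t)\,\Lambda(\tfrac{1}{2},E_t)}{\Lambda(1,\Ad\varphi)^2\,\Lambda(1,\Ad E_t)}\,\tau\Big(\frac{q_{\varphi}}{\sqrt{q_{\Ad\varphi}}}\Big)^2\Big(\frac{q_{\varphi}}{\sqrt{q_{\Ad\varphi}}}\Big)^{2\vartheta},
\]
where $E_t$ denotes the unitary Eisenstein series and $\Lambda$ denotes the completed $L$-function. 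As in the cuspidal case, the archimedean local integral at $\infty$ can be evaluated using \cite[Theorem 2]{Watson} with the observation that $E(\cdot,\tfrac{1}{2}+it)$ is even under $z\mapsto -\bar{z}$, so the factor analogous to $(1+W_k)/2$ is simply $1$.

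Next, I will perform the factorizations proper to Eisenstein series: the induced representation attached to $E_t$ is $|\cdot|^{it}\boxplus|\cdot|^{-it}$, so that $L(s,E_t)=\zeta(s+it)\zeta(s-it)$, $L(s,\Ad E_t)=\zeta(s+2it)\zeta(s)\zeta(s-2it)$, and
\[
L(s,\Ad\varphi \times E_t) = L(s+it,\Ad\varphi)\,L(s-it,\Ad\varphi).
\]
Evaluating the Rankin--Selberg quotient at $s=\tfrac12$ and taking absolute values yields $|L(\tfrac{1}{2}+it,\Ad\varphi)|^2$ in the numerator, while the factors of $L(1,\Ad E_t)$ give $|\zeta(1+2it)|^2\zeta(1)$ in the denominator. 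The $\zeta(1)$-type pole is already incorporated into the completion and is harmless. The bounds $L(1,\Ad\varphi)^{-1}\ll_{\epsilon'}(q_{\varphi}(3+|t_{\varphi}|))^{\epsilon'/2}$ from \eqref{eqn:HL} and $|\zeta(1+2it)|^{-1}\ll\log(3+|t|)$ handle the special-value denominators (the latter being absorbed into $(3+|t|)^{\epsilon'/2}$), and the $\tau$- and $10^{\omega(\cdot)}$-factors are bounded by $(q_\varphi)^{\epsilon'/2}$.

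Finally, Stirling's formula applied to the archimedean gamma factors in $\Lambda(\tfrac{1}{2},\Ad\varphi\times E_t)\,\Lambda(\tfrac{1}{2},E_t)/\Lambda(1,\Ad E_t)$ produces precisely the three denominator factors
\[
(3+|t|)^{\frac{1}{2}}\,(3+|2t_{\varphi}-t|)^{\frac{1}{2}}\,(3+|2t_{\varphi}+t|)^{\frac{1}{2}},
\]
exactly as in the analogous computation for \cref{lem:discrete_spectrum} (and mirroring the archimedean manipulations referenced there via \cite{MR3647437}). Combining everything and using the relation between $q_{\varphi}$ and $q_{\Ad\varphi}$ to extract the factor $q_{\Ad\varphi}^{-1/2}(\sqrt{q_{\Ad\varphi}}/q_{\varphi})^{1-2\vartheta}$ gives the claimed bound. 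The only subtlety requiring care is confirming that the Ichino--NPS local computations are insensitive to whether the second test vector is cuspidal or Eisenstein at the nonarchimedean places; this follows because the local integrals are expressed in terms of the local $L$- and $\epsilon$-factors, which are defined identically in both cases via the Langlands--Shahidi or Rankin--Selberg recipe, so no truly new input is needed.
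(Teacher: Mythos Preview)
Your approach is essentially correct and close in spirit to the paper's, but two points deserve comment.

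First, the paper does not go through the Ichino triple product formula here. It uses the classical Rankin--Selberg unfolding method to express $\langle E(\cdot,s),|\varphi|^2\rangle_{q_\varphi}$ directly in terms of $L(s,\varphi\times\overline{\varphi})$, which factors as $\zeta(s)L(s,\Ad\varphi)$ up to corrections at primes dividing $q_\varphi$; only \cite[Corollary~2.8]{NPS} is then invoked to control those local factors. Your route via the Eisenstein specialization of Ichino's formula is legitimate (the two identities agree in this case), but unfolding is more elementary and sidesteps the regularization/$\zeta(1)$ issues you have to talk around at the end.

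Second, you have omitted a step the paper lists explicitly: the convexity bound $|\zeta(\tfrac{1}{2}+it)|^2\ll_{\epsilon'}(3+|t|)^{1/2+\epsilon'}$. The finite part of $\Lambda(\tfrac12,E_t)$ is $|\zeta(\tfrac12+it)|^2$, and Stirling on the archimedean gamma factors in your displayed ratio contributes $(3+|t|)^{-1}$, not $(3+|t|)^{-1/2}$; it is the combination of Stirling with the convexity bound for $\zeta$ that yields the factor $(3+|t|)^{-1/2}$ in the final estimate, exactly parallel to the use of the convexity bound for $L(\tfrac12,\varphi_k)$ in \cref{lem:discrete_spectrum}. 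With that insertion your argument goes through.
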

\begin{proof}
This is proved using the local calculations of Nelson, Pitale, and Saha in \cite[Corollary 2.8]{NPS} the unfolding method, the convexity bound for the Riemann zeta function, Stirling's formula, and \eqref{eqn:HL}.
\end{proof}

In order to bound the desired averages of the inner products in \cref{lem:discrete_spectrum,lem:continuous_spectrum}, we require an understanding of the central values of the pertinent $L$-functions on average.  We obtain such an understanding using \cref{prop:subconvexity}.  In doing so, we shall identify a Hecke--Maa\ss{} newform $\varphi$ with its corresponding cuspidal automorphic representation $\pi_{\varphi} \in \mathfrak{F}_2$.  Abusing notation, we use $\varphi$ and $\pi_{\varphi}$ interchangeably.  The analytic conductors of $\varphi$ and $\varphi_k$ satisfy $C(\varphi)=q_{\varphi}(3+|t_{\varphi}|)^2$ and $C(\varphi_k)=(3+|t_k|)^2$, respectively.

\begin{proposition}
\label{prop:subconvex_QE}
Let $\epsilon>0$.  Let $1\leq M\leq Q^{1/22}$.  The set
\begin{multline*}
\mathscr{E}_1(Q,M) \coloneqq \{\Ad \varphi\colon \varphi\in\mathpzc{F}(Q),\textup{ there exists $t\in[-M,M]$ such that}\\
|L(\tfrac{1}{2}+it,\Ad \varphi)|\geq (q_{\Ad \varphi}(3+|t_{\varphi}|)^2(3+|t|)^3)^{\frac{1}{4}-6\times 10^{-12}\epsilon}\}
\end{multline*}
has cardinality $O_{\epsilon}(Q^{2\epsilon})$.  Additionally, the set
\begin{multline*}
\mathscr{E}_2(Q,M) \coloneqq \{\Ad \varphi\colon \varphi\in\mathpzc{F}(Q),\textup{ there exists $\varphi_k$ with $|t_k| \leq M$ such that}\\
L(\tfrac{1}{2},\Ad \varphi\times\varphi_k)\geq(q_{\Ad \varphi}^2(3+|t_{\varphi}|)^4(3+|t_k|)^6)^{\frac{1}{4}-6\times 10^{-12}\epsilon}\}
\end{multline*}
has cardinality $O_{\epsilon}(M^2 Q^{2\epsilon})$.
\end{proposition}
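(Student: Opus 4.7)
The plan is to apply \cref{prop:subconvexity} directly to the adjoint lift $\Ad\varphi$, viewed as an element of $\mathfrak{F}_3$, with two choices of $\pi'$. The first preparatory task is to control the analytic conductor of $\Ad\varphi$ in terms of $Q$. Since $\sqrt{q_{\Ad\varphi}}\mid q_\varphi$ gives $q_{\Ad\varphi}\le q_\varphi^2$, and the archimedean Langlands parameters of $\Ad\varphi$ are $\{0,\pm 2it_\varphi\}$ so that its archimedean conductor is $\asymp(3+|t_\varphi|)^2\asymp\lambda_\varphi$, the defining relation $\lambda_\varphi q_\varphi\asymp Q$ yields
\[
C(\Ad\varphi)\asymp q_{\Ad\varphi}(3+|t_\varphi|)^2\le q_\varphi\cdot q_\varphi\lambda_\varphi\ll Q^2.
\]
Hence $\Ad\varphi\in\mathfrak{F}_3(CQ^2)$ for some absolute constant $C>0$, provided $\varphi$ is non-dihedral so that $\Ad\varphi$ is cuspidal.

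To bound $\mathscr{E}_1(Q,M)$, I would invoke \cref{prop:subconvexity} at the source family $\mathfrak{F}_3(CQ^2)$ with $n=3$, $n'=1$, and $\pi'=\mathbf{1}$ (the trivial representation, $C(\pi')=1$). Since $M\le Q^{1/22}\le(CQ^2)^{1/11}$, the interval $[-M,M]$ lies in the admissible window for $t$, and the proposition produces
\[
|L(\tfrac12+it,\Ad\varphi)|\le(C(\Ad\varphi)(3+|t|)^3)^{1/4-6\epsilon/10^{12}}\ll(q_{\Ad\varphi}(3+|t_\varphi|)^2(3+|t|)^3)^{1/4-6\epsilon/10^{12}}
\]
with the number of exceptional $\Ad\varphi$ bounded by $O_\epsilon((CQ^2)^\epsilon)=O_\epsilon(Q^{2\epsilon})$. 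After absorbing the implied constant into a negligible relaxation of the exponent, this matches the failure condition defining $\mathscr{E}_1(Q,M)$.

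For $\mathscr{E}_2(Q,M)$, I would fix a Hecke--Maa\ss{} cusp form $\varphi_k$ on $\Gamma\backslash\H$ with $|t_k|\le M$. Then $C(\varphi_k)\asymp(3+|t_k|)^2\le M^2\le Q^{1/11}\le(CQ^2)^{1/11}$, so $\varphi_k\in\mathfrak{F}_2((CQ^2)^{1/11})$. Applying \cref{prop:subconvexity} with $n=3$, $n'=2$, $\pi'=\varphi_k$, $t=0$ yields, outside an exceptional set of cardinality $O_\epsilon(Q^{2\epsilon})$,
\[
L(\tfrac12,\Ad\varphi\times\varphi_k)\ll(C(\Ad\varphi)^2 C(\varphi_k)^3)^{1/4-6\epsilon/10^{12}}\ll(q_{\Ad\varphi}^2(3+|t_\varphi|)^4(3+|t_k|)^6)^{1/4-6\epsilon/10^{12}}.
\]
A union bound over the $O(M^2)$ cusp forms $\varphi_k$ with $|t_k|\le M$, provided by Weyl's law for $\Gamma\backslash\H$, then delivers $|\mathscr{E}_2(Q,M)|\ll_\epsilon M^2 Q^{2\epsilon}$.

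The one genuine subtlety, rather than a true obstacle, is the dihedral case: when $\varphi$ is dihedral, $\Ad\varphi$ is non-cuspidal and decomposes as an isobaric sum $\chi\boxplus\pi_0$ with $\chi$ a quadratic Dirichlet character and $\pi_0\in\mathfrak{F}_2$. The $L$-functions factor as $L(s,\Ad\varphi)=L(s,\chi)L(s,\pi_0)$ and $L(s,\Ad\varphi\times\varphi_k)=L(s,\chi\times\varphi_k)L(s,\pi_0\times\varphi_k)$, and I would apply \cref{prop:subconvexity} at degrees $1$ and $2$ to each factor separately. The conductor estimates for $\chi$ and $\pi_0$ match those for $\Ad\varphi$ in the cuspidal case up to absolute constants, so the exceptional counts combine without degrading the overall bounds.
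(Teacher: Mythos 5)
Your proposal is correct and follows essentially the same route as the paper: bound $C(\Ad \varphi) \ll Q^2$ so that $\Ad\varphi \in \mathfrak{F}_3(O(Q^2))$ in the nondihedral case, apply \cref{prop:subconvexity} with $\pi'$ trivial (for $\mathscr{E}_1$) resp.\ $\pi' = \pi_{\varphi_k}$ (for $\mathscr{E}_2$) and sum over the $O(M^2)$ forms $\varphi_k$ via the Weyl law, and handle dihedral $\varphi$ through the isobaric decomposition of $\Ad\varphi$ into a quadratic character plus a $\GL_2$ piece and the corresponding factorization of the $L$-values. The paper is merely more explicit in the dihedral case, recording $D \le \sqrt{2Q}$, $q_{\varphi'\otimes\chi_D} \le 4Q^2$, and the nonnegativity of both central values that justifies passing from ``the product is large'' to ``one factor exceeds its share of the conductor.''
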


\begin{proof}
We give the details for the second part only; the details for the first part are simpler.  We will separately estimate the cardinalities of the nondihedral subfamily
\begin{multline*}
\{\Ad \varphi\colon \varphi\in\mathpzc{F}(Q),\textup{ $\varphi$ nondihedral, there exists $\varphi_k$ with $|t_k| \leq M$ such that}\\
L(\tfrac{1}{2},\Ad \varphi\times\varphi_k)\geq(q_{\Ad \varphi}^2(3+|t_{\varphi}|)^4(3+|t_k|)^6)^{\frac{1}{4}-6\times 10^{-12}\epsilon}\}
\end{multline*}
and the dihedral subfamily
\begin{multline*}
\{\Ad \varphi\colon \varphi\in\mathpzc{F}(Q),\textup{ $\varphi$ dihedral, there exists $\varphi_k$ with $|t_k| \leq M$ such that}\\
L(\tfrac{1}{2},\Ad \varphi\times\varphi_k)\geq(q_{\Ad \varphi}^2(3+|t_{\varphi}|)^4(3+|t_k|)^6)^{\frac{1}{4}-6\times 10^{-12}\epsilon}\}.
\end{multline*}

If $\varphi \in \mathpzc{F}(Q)$ is nondihedral, then it follows from work of Gelbart and Jacquet \cite{GJ} that $\Ad \pi_{\varphi}\in\mathfrak{F}_3$, and $C(\Ad \varphi)\leq 4 C(\varphi)^2$. Therefore, by the above discussion, \cref{prop:subconvexity} implies that the cardinality of the nondihedral subfamily is
\begin{align*}
&\ll \sum_{|t_k|\leq M} \big|\{\pi\in\mathfrak{F}_3(16 Q^2)\colon |L(\tfrac{1}{2},\pi\times \pi_{\varphi_k})|\geq (C(\pi)^2 C(\pi_{\varphi_k})^3)^{\frac{1}{4}-6\times 10^{-12}\epsilon}\}\big|\\
&\ll_{\epsilon}Q^{2\epsilon}|\{\varphi_k\colon |t_k|\leq M\}|.
\end{align*}
This is $O_{\epsilon}(M^2 Q^{2\epsilon})$ by the Weyl law \cite[Chapter 11]{Iwaniec_spectral}
\begin{equation}
\label{eqn:Weyl_law}
|\{t_k\colon |t_k|\leq M\}| = \frac{1}{12}M^2+O(M\log M).
\end{equation}

If $\varphi \in \mathpzc{F}$ is dihedral, then there exists a real quadratic extension $E/\Q$ of discriminant $D > 1$ and a Hecke character $\chi$ of $E$ with arithmetic conductor $\mathfrak{q}$ such that $\varphi$ is the automorphic induction of $\chi$; in particular, $\varphi$ has arithmetic conductor $q_{\varphi} = D \mathrm{N}_{E/\Q}(\mathfrak{q})$ with $\mathrm{N}_{E/\Q}(\mathfrak{q}) \equiv 0 \pmod{D}$ \cite[Lemma 4.2]{Humphries_mult}\footnote{It is erroneously stated in \cite[Lemma 4.2]{Humphries_mult} that $\mathrm{N}_{E/Q}(\mathfrak{q}) = D$. Daniel Hu alerted the first author that in fact only the weaker statement $\mathrm{N}_{E/\Q}(\mathfrak{q}) \equiv 0 \pmod{D}$ is true.}. The adjoint lift of $\varphi$ has the isobaric decomposition $\Ad \varphi=\chi_D\boxplus\varphi'\otimes\chi_D$, where $\varphi'$ is the automorphic induction of $\chi^2$ and $\chi_D$ denotes the primitive Dirichlet character modulo $D$ corresponding to $E/\Q$.  This gives us the factorization
\[
L(\tfrac{1}{2},\Ad \varphi\times\varphi_k)=L(\tfrac{1}{2},\varphi_k\otimes\chi_D)L(\tfrac{1}{2},(\varphi'\otimes\chi_D)\times\varphi_k),
\]
and both central $L$-values on the right-hand side are nonnegative \cite{Wal85}. It follows that if $\varphi \in \mathpzc{F}(Q)$ is dihedral, the discriminant of the associated real quadratic field $E$ satisfies $D \leq \sqrt{2Q}$, while the analytic conductor of $C(\varphi_k\otimes\chi_D)$ is $D^2 C(\varphi_k)$, and
\[
C((\varphi'\otimes\chi_D)\times\varphi_k)=q_{\varphi'\otimes\chi_D}^2(3+|2t_{\varphi}+t_k|)^2(3+|2t_{\varphi}-t_k|)^2,
\]
where $D q_{\varphi'\otimes\chi_D}$ is a perfect square for which $\sqrt{D q_{\varphi'\otimes \chi_D}} \mid q_{\varphi}$, so that $q_{\varphi' \otimes \chi_D} \leq 4Q^2$. 
So the cardinality of the dihedral subfamily is
\begin{multline*}
\ll \sum_{|t_k|\leq M} \big|\{D\leq\sqrt{2Q}\colon L(\tfrac{1}{2},\varphi_k\otimes\chi_D) \geq (D^2 C(\varphi_k))^{\frac{1}{4}-6\times 10^{-12}\epsilon}\}\big|\\
+ \sum_{|t_k|\leq M} \big|\{\pi\in\mathfrak{F}_2(4Q^2)\colon |L(\tfrac{1}{2},\pi\otimes\pi_{\varphi_k})|\geq (C(\pi)^2 C(\varphi_k)^2)^{\frac{1}{4}-6\times 10^{-12}\epsilon}\}\big|.
\end{multline*}
By \cref{prop:subconvexity} and \eqref{eqn:Weyl_law}, this is $O_{\epsilon}(M^2 Q^{2\epsilon})$.  The proposition follows by combining the dihedral and nondihedral subfamilies.
\end{proof}

\begin{corollary}
\label{cor:subconvexity}
Let $\epsilon,\epsilon'>0$.  Let $1\leq M\leq Q^{1/22}$.  If $\Ad\varphi\notin \mathscr{E}_1(Q,M)$, then
\[
\int_{|t|\leq M} |\langle E(\cdot,\tfrac{1}{2}+it),|\varphi|^2\rangle_{q_{\varphi}}|^2 \, dt\ll_{\epsilon'}(\lambda_{\varphi}q_{\varphi})^{\epsilon'-1.2\times 10^{-11}\epsilon} M^{\frac{7}{2}}.
\]
Also, if $\Ad\varphi\notin\mathscr{E}_2(Q,M)$, then
\[
\sum_{|t_k|\leq M} |\langle \varphi_k,|\varphi|^2 \rangle_{q_{\varphi}}|^2\ll_{\epsilon'}(\lambda_{\varphi}q_{\varphi})^{\epsilon'-1.2\times 10^{-11}\epsilon}M^{\frac{7}{2}}.
\]
\end{corollary}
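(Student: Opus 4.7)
The approach is to insert the subconvex $L$-bounds furnished by the exceptional-set hypotheses into the pointwise inner product estimates of \cref{lem:discrete_spectrum,lem:continuous_spectrum}, and then to sum or integrate. For the second statement, the hypothesis $\Ad\varphi\notin\mathscr{E}_2(Q,M)$ guarantees that for every $\varphi_k$ with $|t_k|\leq M$,
\[
L(\tfrac{1}{2},\Ad\varphi\times\varphi_k) \leq \big(q_{\Ad\varphi}^{2}(3+|t_\varphi|)^{4}(3+|t_k|)^{6}\big)^{\frac{1}{4}-6\times 10^{-12}\epsilon}.
\]
Plugging this into \cref{lem:discrete_spectrum}, the combination $q_{\Ad\varphi}^{-\vartheta+\frac{1}{2}-12\times 10^{-12}\epsilon}\,q_\varphi^{-(1-2\vartheta)}$ collapses to $\ll q_\varphi^{-24\times 10^{-12}\epsilon}$ via $q_{\Ad\varphi}\leq q_\varphi^{2}$ (the Ramanujan exponent $\vartheta$ cancels), and the archimedean denominator satisfies $(3+|2t_\varphi-t_k|)(3+|2t_\varphi+t_k|)\gg 3+|t_\varphi|$ by the triangle inequality. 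After these simplifications, summing over $|t_k|\leq M$ via the Weyl law \eqref{eqn:Weyl_law} together with a dyadic partition yields
\[
\sum_{|t_k|\leq M}(3+|t_k|)^{1+\epsilon'/2-36\times 10^{-12}\epsilon}\ll M^{3+\epsilon'/2}.
\]

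To convert the residual $\varphi$-dependent factor $q_\varphi^{-24\times 10^{-12}\epsilon}(3+|t_\varphi|)^{1/2-24\times 10^{-12}\epsilon}$ into the target saving $(\lambda_\varphi q_\varphi)^{-1.2\times 10^{-11}\epsilon}\,M^{7/2}$, I would split on the size of $|t_\varphi|$. When $|t_\varphi|>M$, the refined estimate $(3+|2t_\varphi\pm t_k|)\asymp 3+|t_\varphi|$ (valid since $|t_k|\leq M<|t_\varphi|$) replaces $(3+|t_\varphi|)^{1/2-24\times 10^{-12}\epsilon}$ by $(3+|t_\varphi|)^{-24\times 10^{-12}\epsilon}$, producing parallel savings in $q_\varphi$ and $(3+|t_\varphi|)$; since $\lambda_\varphi q_\varphi\asymp q_\varphi(3+|t_\varphi|)^{2}$, these combine to the desired bound. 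When $|t_\varphi|\leq M\leq Q^{1/22}$, the inequality $\lambda_\varphi\ll M^{2}\ll Q^{1/11}$ together with $\lambda_\varphi q_\varphi\asymp Q$ forces $q_\varphi\gg Q^{10/11}$, so $q_\varphi^{-24\times 10^{-12}\epsilon}\ll Q^{-(10/11)\cdot 24\times 10^{-12}\epsilon}\leq(\lambda_\varphi q_\varphi)^{-1.2\times 10^{-11}\epsilon}$ on its own, while $(3+|t_\varphi|)^{1/2}\ll M^{1/2}$ lifts the $M$-power from $M^{3}$ to $M^{7/2}$. Part (a) runs in parallel: substitute $|L(\tfrac{1}{2}+it,\Ad\varphi)|^{2}\leq(q_{\Ad\varphi}(3+|t_\varphi|)^{2}(3+|t|)^{3})^{\frac{1}{2}-12\times 10^{-12}\epsilon}$ into \cref{lem:continuous_spectrum}, integrate over $|t|\leq M$ (obtaining $M^{2+\epsilon'/2}$ in place of $M^{3+\epsilon'/2}$), and apply the same $|t_\varphi|$-dichotomy; the resulting $M^{5/2}$ is dominated by $M^{7/2}$.

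The main obstacle is the repackaging carried out in the previous paragraph. The subconvex bound saves in the $\Ad\varphi$-conductor $q_{\Ad\varphi}(3+|t_\varphi|)^{2}$, whereas the target saving is in $\lambda_\varphi q_\varphi\asymp q_\varphi(3+|t_\varphi|)^{2}$; these differ substantially when $q_{\Ad\varphi}\ll q_\varphi^{2}$, as occurs for dihedral forms. The case split on $|t_\varphi|$ versus $M$ reconciles the two---for $|t_\varphi|>M$ through the sharper archimedean denominator bound, and for $|t_\varphi|\leq M$ through the forced lower bound $q_\varphi\gg Q^{10/11}$ coming from the restriction $M\leq Q^{1/22}$---and it is this same dichotomy that is responsible for the $M^{7/2}$ appearing in the statement rather than the naively expected $M^{3}$.
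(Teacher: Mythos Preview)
Your argument is correct, but the paper proceeds more directly. After inserting the subconvex bound into \cref{lem:discrete_spectrum}, the paper does \emph{not} split on the size of $|t_\varphi|$; instead it observes the uniform archimedean inequality
\[
\frac{(3+|t_k|)(3+|t_{\varphi}|)}{(3+|2t_{\varphi}-t_k|)^{\frac{1}{2}}(3+|2t_{\varphi}+t_k|)^{\frac{1}{2}}} \ll 1+|t_k|^{\frac{3}{2}},
\]
valid for all $t_\varphi$ (the supremum being attained near $2t_\varphi=\pm t_k$). This single bound absorbs the entire $(3+|t_\varphi|)$ numerator at once, so that the conductor saving $[q_\varphi(3+|t_\varphi|)]^{-24\times 10^{-12}\epsilon}\ll Q^{-1.2\times 10^{-11}\epsilon}$ and the sum $\sum_{|t_k|\leq M}(1+|t_k|^{3/2})\ll M^{7/2}$ are handled independently, with no case analysis and no appeal to the hypothesis $M\leq Q^{1/22}$ in this corollary itself.

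Your route---bounding the denominator only by $(3+|t_\varphi|)^{1/2}$ and then disposing of the residual $(3+|t_\varphi|)^{1/2}$ via the dichotomy $|t_\varphi|\lessgtr M$---is a legitimate workaround, and your use of $M\leq Q^{1/22}$ to force $q_\varphi\gg Q^{10/11}$ in the small-$|t_\varphi|$ case is correct. But the dichotomy is unnecessary: the paper's uniform bound on the archimedean ratio is exactly the missing observation that makes the argument one line instead of two cases, and it explains why $M^{7/2}$ (rather than $M^3$) is the natural exponent, not merely an artifact of one branch of a case split.
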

\begin{proof}
We give the details for the second part only; the details for the first part are simpler.  Note that if $\varphi\in\mathpzc{F}(Q)$, then $\lambda_{\varphi}q_{\varphi}\asymp Q$.  By \cref{lem:discrete_spectrum}, we have that
\begin{equation}
\label{eqn:new_sum_1}
\sum_{|t_k|\leq M} |\langle \varphi_k,|\varphi|^2 \rangle_{q_{\varphi}}|^2\ll_{\epsilon'} Q^{\epsilon'} \sum_{|t_k|\leq M}\frac{q_{\Ad \varphi}^{-\frac{1}{2}}(\sqrt{q_{\Ad \varphi}}/q_{\varphi})^{1-2\vartheta}L(\frac{1}{2},\Ad \varphi \times \varphi_k)}{[(3+|t_k|)(3+|2t_{\varphi}-t_k|)(3+|2t_{\varphi}+t_k|)]^{1/2}}
\end{equation}
\cref{prop:subconvex_QE} and \cite[Example 3]{ST} ensure that for all $k$ such that $|t_k|\leq M$, we have the bound
\[
L(\tfrac{1}{2},\Ad \varphi \times \varphi_k)\ll \begin{dcases*}
[q_{\Ad \varphi}^{2}(3+|t_{\varphi}|)^4 (3+|t_k|)^6]^{\frac{1}{4}}& if $\Ad \varphi\in\mathscr{E}_1(Q,M)$,\\
[q_{\Ad \varphi}^{2}(3+|t_{\varphi}|)^4 (3+|t_k|)^6]^{\frac{1}{4}-6\times 10^{-12}\epsilon}& if $\Ad \varphi\notin\mathscr{E}_1(Q,M)$.
\end{dcases*}
\]
Therefore, if $\Ad \varphi\notin\mathscr{E}_2(Q,M)$, then \eqref{eqn:new_sum_1} is
\begin{equation}
\label{eqn:newsum2}
\begin{aligned}
\ll_{\epsilon'} Q^{\epsilon'} \sum_{|t_k|\leq M} \frac{(\sqrt{q_{\Ad \varphi}}/q_{\varphi})^{1-2\vartheta}}{(q_{\Ad \varphi}^{2}(3+|t_{\varphi}|)^4)^{6\epsilon/10^{12}}}\frac{(3+|t_k|)(3+|t_{\varphi}|)}{(3+|2t_{\varphi}-t_k|)^{\frac{1}{2}} (3+|2t_{\varphi}+t_k|)^{\frac{1}{2}}}.
\end{aligned}
\end{equation}

Since $\sqrt{q_{\Ad \varphi}} \mid q_{\varphi}$ (with equality if and only if $q_{\varphi}$ is squarefree), we have
\[
\frac{(\sqrt{q_{\Ad \varphi}}/q_{\varphi})^{1-2\vartheta}}{(q_{\Ad \varphi}^2(3+|t_{\varphi}|)^{4})^{6\epsilon/10^{12}}}\ll Q^{-1.2\times 10^{-11}\epsilon}.
\]
The bound
\[
\frac{(3+|t_k|)(3+|t_{\varphi}|)}{(3+|2t_{\varphi}-t_k|)^{\frac{1}{2}} (3+|2t_{\varphi}+t_k|)^{\frac{1}{2}}} \ll 1 + |t_k|^{\frac{3}{2}}\ll M^{\frac{3}{2}}
\]
holds since the supremum of the left-hand side as $t_{\varphi}$ varies is achieved when $2t_{\varphi}=\pm t_k$.  Therefore, by the above discussion and \eqref{eqn:Weyl_law}, \eqref{eqn:newsum2} is
\[
\ll_{\epsilon'}Q^{\epsilon'-1.2\times 10^{-11}\epsilon} M^{\frac{7}{2}}\asymp_{\epsilon'}(\lambda_{\varphi}q_{\varphi})^{\epsilon'-1.2\times 10^{-11}\epsilon} M^{\frac{7}{2}}.\qedhere
\]
\end{proof}

\subsection{Proofs of \texorpdfstring{\cref{thm:QE}}{Theorem \ref{thm:QE}} and \texorpdfstring{\cref{cor:QE}}{Corollary \ref{cor:QE}}}

\begin{proof}[{Proof of \cref{thm:QE}}]
Let
\[
T\geq e,\qquad 1\leq M\leq Q^{\frac{1}{22}},\qquad \epsilon,\epsilon'>0.
\]
By \cref{lem:Sound,lem:discrepancy}, we find that if $\varphi \in \mathpzc{F}(Q)$, then $D(\mu_{\varphi})^2$ is
\begin{align*}
\ll\frac{(\log T)^2}{T}+\frac{1}{M^2}+\Big(1+\frac{T}{M^3}\Big)\Big(\sum_{|t_k|\leq M} |\langle \varphi_k,|\varphi|^2 \rangle_{q_{\varphi}}|^2+\int_{|t|\leq M} |\langle E(\cdot,\tfrac{1}{2}+it),|\varphi|^2\rangle_{q_{\varphi}}|^2 \, dt\Big).
\end{align*}
Recall $\mathscr{E}_1(Q,M)$ and $\mathscr{E}_2(Q,M)$ from \cref{prop:subconvex_QE}.  If $\Ad\varphi\notin\mathscr{E}_1(Q,M)\cup\mathscr{E}_2(Q,M)$, then by \cref{cor:subconvexity}, we have the bound
\[
D(\mu_{\varphi})^2\ll_{\epsilon'} \frac{(\log T)^2}{T}+\frac{1}{M^{2}} + \Big(1+\frac{T}{M^3}\Big)(\lambda_{\varphi}q_{\varphi})^{\epsilon'-1.2\times 10^{-11}\epsilon} M^{\frac{7}{2}}.
\]
We choose $Q$ to be large with respect to $\epsilon$, and we choose
\[
A=\frac{6875\cdot 10^8}{3},\qquad T=Q^{\frac{\epsilon}{A}},\qquad M = Q^{\frac{\epsilon}{2A}},\qquad \epsilon'=\frac{\epsilon}{10^{20}}.
\]
Noting that $|\mathscr{E}_1(Q,M)| + |\mathscr{E}_2(Q,M)| \ll_{\epsilon} M^2 Q^{2\epsilon}$ by \cref{prop:subconvex_QE}, we find that
\[
|\{\Ad \varphi\colon\varphi\in\mathpzc{F}(Q),~D(\mu_{\varphi})\geq (\lambda_{\varphi}q_{\varphi})^{-2.18\times 10^{-12}\epsilon}\}|\ll_{\epsilon}Q^{(2+\frac{1}{A})\epsilon}.
\]
Rescaling $\epsilon$ to $\epsilon/(2+1/A)$, we conclude that
\[
|\{\Ad \varphi\colon\varphi\in\mathpzc{F}(Q),~D(\mu_{\varphi})\geq (\lambda_{\varphi}q_{\varphi})^{-1.08\times 10^{-12}\epsilon}\}|\ll_{\epsilon}Q^{\epsilon},
\]
which is stronger than what \cref{thm:QE} asserts.
\end{proof}

\begin{proof}[{Proof of \cref{cor:QE}}]
Given $\varphi\in\mathpzc{F}(Q)$, define $m(\varphi,Q)\coloneqq \{\varphi'\in\mathpzc{F}(Q)\colon \Ad\varphi=\Ad\varphi'\}$.  By \cite[Theorem 4.1.2]{Ramakrishnan}, if $q_{\varphi}$ is squarefree, then $m(\varphi,Q)=1$.  Otherwise, for all $\delta>0$, we have the bound $m(\varphi,Q)\ll_{\delta}Q^{1/2+\delta}$. In light of the bound \eqref{eqn:F(Q)_bound} and the fact that the convexity bound for $L$-functions yields $|D(\mu_{\varphi})|\ll 1$, \cref{cor:QE} follows immediately from \cref{thm:QE}.
\end{proof}

\section{Nonsplit quantum ergodicity}
\label{sec:nonsplit}

Let $E = \Q(\sqrt{D})$ be a real quadratic field with ring of integers $\cO_E$, where $D > 0$ is a fundamental discriminant; we assume for simplicity that $E$ has narrow class number $1$.  Let $\sigma$ be the nontrivial Galois automorphism of $E$.  Given
\[
\gamma = \begin{pmatrix} a & b \\ c & d \end{pmatrix}\in \SL_2(\cO_E),
\]
define
\[
\gamma z \coloneqq \frac{az + b}{cz + d},\qquad \sigma(\gamma) z \coloneqq \frac{\sigma(a)z + \sigma(b)}{\sigma(c)z + \sigma(d)}.
\]
A Hilbert Hecke--Maa\ss{} cusp form of level $\cO_E$ is an $L^2$-normalized smooth function $\phi \colon \H \times \H \to \mathbb{C}$ for which
\begin{itemize}
\item $\phi$ is a joint eigenfunction of the weight $0$ Laplacians
\[
\Delta_1 \coloneqq -y_1^2 \Big(\frac{\partial^2}{\partial x_1^2} + \frac{\partial^2}{\partial y_1^2}\Big), \qquad \Delta_2 \coloneqq -y_2^2 \Big(\frac{\partial^2}{\partial x_2^2} + \frac{\partial^2}{\partial y_2^2}\Big)
\]
for $(z_1,z_2) = (x_1 + iy_1,x_2 + iy_2) \in \H \times \H$, so that there exist $t_{1,\phi},t_{2,\phi} \in \R \cup i[-\frac{7}{64},\frac{7}{64}]$ such that if $\lambda_{1,\phi} = \frac{1}{4} + t_{1,\phi}^2$ and $\lambda_{2,\phi} = \frac{1}{4} + t_{2,\phi}^2$, then
\[
\Delta_1 \phi(z_1,z_2) = \lambda_{1,\phi} \phi(z_1,z_2),\qquad \Delta_2 \phi(z_1,z_2) = \lambda_{2,\phi} \phi(z_1,z_2),
\]
\item $\phi$ is automorphic, so that if $\gamma\in \mathrm{SL}_2(\mathcal{O}_E)$, then $\phi(\gamma z_1, \sigma(\gamma) z_2) = \phi(z_1,z_2)$,
\item $\phi$ is of moderate growth,
\item $\phi$ is cuspidal, and
\item $\phi$ is a joint eigenfunction of every Hecke operator.
\end{itemize}
There is a diagonal embedding $\H \hookrightarrow \H \times \H$ given by the map $z \mapsto (z,z)$. A Hilbert Hecke--Maa\ss{} cusp form $\phi$ is $\Gamma$-invariant when restricted to the diagonal embedding of $\H$; thus $\phi(z,z)$ may be viewed as the restriction of a Hilbert Hecke--Maa\ss{} cusp form to the modular surface $\Gamma \backslash \H$.

\begin{remark}
The constraints $t_{1,\phi},t_{2,\phi} \in \R \cup i[-\frac{7}{64},\frac{7}{64}]$ follow from progress of Blomer and     Brumley towards the generalized Ramanujan conjecture \cite[Theorem 1]{BB2}. We only require the weaker fact that $t_{1,\phi},t_{2,\phi} \in \R \cup i(-\frac{1}{6},\frac{1}{6})$, which is needed in the proof of \cref{lem:realtriple} below in order to invoke work of Cheng \cite{Che21}.
\end{remark}

\subsection{Period integrals involving Hilbert Maa\ss{} cusp forms}

We consider $\phi(z,z)$ integrated over $\Gamma \backslash \H$ against a Laplacian eigenfunction $H$. By assumption, $\phi$ is $L^2$-normalized, so that
\[
\int\limits_{\SL_2(\cO_E) \backslash \H \times \H} |\phi(z_1,z_2)|^2 \, d\mu(z_1,z_2) = 1,\qquad d\mu(z_1,z_2) \coloneqq \frac{dx_1 \, dx_2 \, dy_1 \, dy_2}{y_1^{2} y_2^{2}}.
\]
We have that $\vol(\SL_2(\cO_E) \backslash \H \times \H) = 2 \sqrt{D} \xi_E(2)$, where $\xi_E(s) \coloneqq D^{s/2} \Gamma_{\R}(s)^2 \zeta_E(s)$ denotes the completed Dedekind zeta function. There are three cases of interest:

\begin{enumerate}
\item $H$ is a constant,
\item $H$ is an Eisenstein series,
\item $H$ is a Hecke--Maa\ss{} cusp form.
\end{enumerate}
In each case, the corresponding period integral $\int_{\Gamma \backslash \H} \phi(z,z) H(z) \, d\mu(z)$ may be associated to certain $L$-functions, as we now elucidate; we postpone the proofs of these identities to \cref{sect:proofsperiods}.

\subsubsection{Nonsplit quantum limits}

We first consider the case of $H = 1$. We completely classify the possible values of $\int_{\Gamma \backslash \H} \phi(z,z) \, d\mu(z)$. We may heuristically think of these possible values as specifying the off-diagonal quantum limits in this nonsplit setting, since \cref{conj:realQUE} predicts that the difference
\[\int_{\Gamma \backslash \H} H(z) \phi_j(z,z) \, d\mu(z) - \frac{3}{\pi} \int_{\Gamma \backslash \H} \phi_j(z,z) \, d\mu(z) \int_{\Gamma \backslash \H} H(z) \, d\mu(z)\]
converges to $0$. Note, however, that $\int_{\Gamma \backslash \H} \phi(z,z) \, d\mu(z)$ may fluctuate based on the Hilbert Hecke--Maa\ss{} cusp form $\phi$, so that these are not true quantum limits as they are not independent of $\phi$.

\begin{lemma}
\label{lem:realBS}
Fix a real quadratic number field $E =\Q(\sqrt{D})$ with narrow class number $1$, and denote by $\chi_D$ the quadratic Dirichlet character modulo $D$ associated to $E$. Let $\phi$ be a Hilbert Hecke--Maa\ss{} cusp form with positive first Fourier coefficient. Then
\begin{multline}
\label{eqn:realBS}
\int_{\Gamma \backslash \H} \phi(z,z) \, d\mu(z)	\\
= \begin{dcases*}
\frac{\sqrt{2}}{D^{\frac{1}{4}}} \sqrt{\frac{\Lambda(1, \Ad \varphi \otimes \chi_D)}{\Lambda(1, \Ad \varphi)}} & \parbox{.62\textwidth}{if $\phi$ is the base change of a nondihedral Hecke--Maa\ss{} newform $\varphi$ of weight $0$, level $D$, nebentypus $\chi_D$, and Laplacian eigenvalue $\lambda_{\varphi} = \lambda_{1,\phi} = \lambda_{2,\phi}$,}	\vspace{1mm}\\
0 & otherwise.
\end{dcases*}
\end{multline}
\end{lemma}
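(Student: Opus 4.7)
The plan is to use the Rankin--Selberg unfolding method against an Eisenstein series on $\Gamma \backslash \H$ to relate $P(\phi) \coloneqq \int_{\Gamma \backslash \H} \phi(z,z)\, d\mu(z)$ to residues of the Asai $L$-function $L(s,\As\phi)$. Since $\sigma(\gamma)=\gamma$ for $\gamma \in \Gamma = \SL_2(\Z)$, the restriction $z \mapsto \phi(z,z)$ is well-defined on $\Gamma \backslash \H$ and lies in $L^2(\Gamma \backslash \H)$. Introduce
\[
R(\phi,s) \coloneqq \int_{\Gamma \backslash \H} \phi(z,z) E(z,s)\, d\mu(z),
\]
where $E(z,s)$ is the standard real-analytic Eisenstein series; since $\Res_{s=1} E(z,s) = 3/\pi$, we have $P(\phi) = (\pi/3)\Res_{s=1} R(\phi,s)$. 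Unfolding against $\Gamma_\infty \backslash \Gamma$ reduces $R(\phi,s)$ to the Mellin transform of the $x$-average of $\phi(z,z)$; inserting the Hilbert Fourier expansion of $\phi$, only the modes with $\mathrm{tr}_{E/\Q}(\nu)=0$ survive, and these form the rank-one $\Z$-lattice $\{n/\sqrt{D} \colon n \in \Z\}$ in $\kd^{-1}$ by narrow class number one. The standard Mellin identity for $K_{it_{1,\phi}}(y)K_{it_{2,\phi}}(y)$ then evaluates the $y$-integral as a product of $\Gamma_{\mathbb{R}}$-factors in $s \pm i t_{1,\phi} \pm i t_{2,\phi}$ times a Dirichlet series in the Hecke eigenvalues of $\phi$ on rational ideals. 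Up to a factor of $\zeta(2s)^{-1}$ and ramified corrections at primes dividing $D$, this Dirichlet series is exactly $L(s,\As\phi)$.

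By the classical theorem characterizing distinguished representations, $L(s,\As\phi)$ has a pole at $s=1$ if and only if $\phi$ is the base change of a cuspidal automorphic representation from $\GL_2(\A_{\Q})$. When $\phi$ is not a base change, $R(\phi,s)$ is regular at $s=1$ and hence $P(\phi)=0$. When $\phi$ arises as the base change of a Hecke--Maa\ss{} newform $\varphi$, cuspidality of $\phi$ forces $\varphi$ to have nebentypus $\chi_D$ and to be nondihedral of level $D$ with $\lambda_{1,\phi} = \lambda_{2,\phi} = \lambda_\varphi$; the factorization $L(s,\As\phi) = \zeta(s) L(s,\Ad\varphi)$, obtained by comparing Satake parameters at split primes and extending by uniqueness of meromorphic continuation, then yields $\Res_{s=1} L(s,\As\phi) = L(1,\Ad\varphi)$, and the archimedean gamma factors simplify explicitly at $s=1$ using $t_{1,\phi}=t_{2,\phi}$.

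To produce the stated formula one must convert between the Fourier-coefficient normalization implicit in the unfolding and the $L^2$-normalization $\|\phi\|_2 = 1$. Rankin--Selberg identifies the squared normalizing constant with $\Lambda(1,\Ad\phi)^{-1}$ up to an explicit adelic constant, and for the base change one has $\Lambda(1,\Ad\phi) = \Lambda(1,\Ad\varphi)\Lambda(1,\Ad\varphi \otimes \chi_D)$. Combined with the volume identity $\vol(\SL_2(\cO_E)\backslash\H\times\H) = 2\sqrt{D}\xi_E(2)$ and the Dedekind decomposition $\xi_E(s) = \xi(s)\Lambda(s,\chi_D)$, the archimedean gamma factors and powers of $\pi$ and $D$ collapse to yield $P(\phi)^2 = (2/\sqrt{D})\,\Lambda(1,\Ad\varphi\otimes\chi_D)/\Lambda(1,\Ad\varphi)$; the positivity hypothesis on the first Fourier coefficient of $\phi$ then fixes the sign on taking square roots. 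The main obstacle is the meticulous bookkeeping of these archimedean and $D$-dependent constants, which is most cleanly organized by rephrasing the whole computation as an adelic period identity in the style of Ichino, applied to the constant test vector on the diagonal subgroup $\GL_2(\A_{\Q}) \subset \GL_2(\A_E)$.
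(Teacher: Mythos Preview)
Your overall strategy coincides with the paper's proof: unfold $\int_{\Gamma\backslash\H}\phi(z,z)E(z,s)\,d\mu(z)$ to obtain $\tfrac14\rho(\phi)\Lambda(s,\As\phi)/\xi(2s)$, take the residue at $s=1$, invoke the criterion that $\Lambda(s,\As\phi)$ has a pole there precisely when $\phi$ is a base change, and then insert the Rankin--Selberg identity $\rho(\phi)^2=8D^{-1/2}/\Lambda(1,\Ad\phi)$ together with the factorization $\Lambda(1,\Ad\phi)=\Lambda(1,\Ad\varphi)\Lambda(1,\Ad\varphi\otimes\chi_D)$.

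There is, however, one concrete error. You assert the factorization $L(s,\As\phi)=\zeta(s)L(s,\Ad\varphi)$ and hence $\Res_{s=1}L(s,\As\phi)=L(1,\Ad\varphi)$. This is incorrect: when $\phi$ is the base change of a newform $\varphi$ of nebentypus $\chi_D$, the isobaric decomposition (due to Krishnamurthy) is $\As\phi=(\Ad\varphi\otimes\chi_D)\boxplus\mathbbm{1}$, so that
\[
\Lambda(s,\As\phi)=\xi(s)\,\Lambda(s,\Ad\varphi\otimes\chi_D),\qquad \Res_{s=1}\Lambda(s,\As\phi)=\Lambda(1,\Ad\varphi\otimes\chi_D).
\]
The twist by $\chi_D$ is not optional; it is what makes the $\mathbbm{1}$-summand (and hence the pole) appear, given that the central character of $\varphi$ is $\chi_D$ rather than trivial. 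If you follow your stated factorization consistently through the rest of the computation, you obtain $P(\phi)^2=(2/\sqrt{D})\,\Lambda(1,\Ad\varphi)/\Lambda(1,\Ad\varphi\otimes\chi_D)$, the reciprocal of the correct ratio. The fact that you nonetheless write down the correct final formula indicates that your ``meticulous bookkeeping'' paragraph is hiding a second compensating slip rather than an honest derivation; you should redo that step with the correct Asai decomposition and verify that the constants genuinely collapse as claimed.
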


\begin{remark}
From this, one can readily show that there exist absolute constants $c_1,c_2 > 0$ such that if $\phi$ is indeed the base change of $\varphi$, then
\begin{equation}
\label{eqn:realBSbounds}
\exp(-c_1 \sqrt{\log C(\As\phi)}) \ll_{D} \int_{\Gamma \backslash \H} \phi(z,z) \, d\mu(z) \ll_{D} \exp(c_2 \sqrt{\log C(\As\phi)}).
\end{equation}
These are consequences of \cite{Banks,HL,Li}.
\end{remark}

\subsubsection{Restrictions of Hilbert Hecke--Maa\ss{} cusp forms and Eisenstein series}

Next, we take $H$ to be an Eisenstein series $E(z,\frac{1}{2}+ it)$ with $t \in \R$.

\begin{lemma}[{Cf.~\cite[Lemma 4.3]{Che21}}]
\label{lem:realEis}
Let $\phi$ be a Hilbert Hecke--Maa\ss{} cusp form with positive first Fourier coefficient, and suppose that $t \in \R$. Then
\begin{equation}
\label{eqn:realEis}
\int_{\Gamma \backslash \H} \phi(z,z) E(z,\tfrac{1}{2} + it) \, d\mu(z) = \frac{1}{\sqrt{2} D^{\frac{1}{4}}} \frac{\Lambda(\frac{1}{2} + it, \As \phi)}{\sqrt{\Lambda(1, \Ad \phi)} \xi(1 + 2it)}.
\end{equation}
\end{lemma}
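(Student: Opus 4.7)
The plan is to evaluate this period integral by Rankin--Selberg unfolding of the Eisenstein series, substitute the Fourier--Whittaker expansion of $\phi$ along the diagonal embedding, and match the resulting Dirichlet series and gamma factors against the completed Asai $L$-function.

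First, in the region $\Re(s)>1$, I would unfold using $E(z,s)=\sum_{\gamma\in\Gamma_\infty\backslash\Gamma}\Im(\gamma z)^s$ to obtain
\[
\int_{\Gamma\backslash\H}\phi(z,z)E(z,s)\,d\mu(z)=\int_0^\infty\!\!\int_0^1\phi(x+iy,x+iy)\,y^{s-2}\,dx\,dy,
\]
and then continue meromorphically to $s=\tfrac12+it$, $t\in\R$. Next, I substitute the Whittaker--Fourier expansion of $\phi$ at the cusp $\infty$ of $\SL_2(\cO_E)\backslash\H\times\H$, which is indexed by nonzero $\nu\in\mathfrak{d}^{-1}=\tfrac{1}{\sqrt{D}}\cO_E$ and takes the shape
\[
\phi(z_1,z_2)=\rho_\phi\sum_{\nu\neq 0}\frac{\lambda_\phi(\nu\mathfrak{d})}{\sqrt{|N(\nu)|}}\sqrt{y_1y_2}\,K_{it_{1,\phi}}(2\pi|\nu^{(1)}|y_1)K_{it_{2,\phi}}(2\pi|\nu^{(2)}|y_2)\,e(\nu^{(1)}x_1+\nu^{(2)}x_2),
\]
where $\lambda_\phi$ are the Hecke eigenvalues and $\rho_\phi>0$ is the normalizing constant. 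The integration over $x\in[0,1]$ enforces $\mathrm{Tr}_{E/\Q}(\nu)=0$; combined with $\nu\in\mathfrak{d}^{-1}$, this restricts the sum to $\nu=a/\sqrt{D}$ for $a\in\Z\setminus\{0\}$, with $\nu\mathfrak{d}=(a)$ and $|\nu^{(1)}|=|\nu^{(2)}|=|a|/\sqrt{D}$.

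The remaining $y$-integral, after the substitution $u=2\pi|a|y/\sqrt{D}$, reduces to the classical Mellin--Barnes integral
\[
\int_0^\infty K_{it_{1,\phi}}(u)K_{it_{2,\phi}}(u)\,u^{s-1}\,du=\frac{2^{s-3}}{\Gamma(s)}\prod_{\epsilon_1,\epsilon_2\in\{\pm 1\}}\Gamma\Big(\frac{s+\epsilon_1 it_{1,\phi}+\epsilon_2 it_{2,\phi}}{2}\Big),
\]
which, together with the factor $(\sqrt{D}/(2\pi|a|))^{s}$ from the change of variable, reconstructs $D^{s/2}L_\infty(s,\As\phi)|a|^{-s}$. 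Summing over $a\geq 1$ produces $\sum_{a\geq 1}\lambda_\phi((a))a^{-s}$, which is the Asai Dirichlet series and equals $L(s,\As\phi)/\zeta(2s)$ by the standard Rankin--Selberg identity (here one uses Hecke multiplicativity and the Euler factorization of $\As\phi$ at split, inert, and ramified primes). Assembling the archimedean factor, the Dirichlet series, the $D^{s/2}$ from the conductor, and the $\zeta(2s)$ denominator, one obtains a clean expression of the form $\rho_\phi\cdot\Lambda(s,\As\phi)/(\text{constants}\cdot\xi(2s))$, evaluated at $s=\tfrac12+it$.

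The final step is to eliminate $\rho_\phi$ in favor of $\Lambda(1,\Ad\phi)$. This is achieved by the Rankin--Selberg computation for $\phi$ itself: unfolding $\int_{\SL_2(\cO_E)\backslash\H\times\H}|\phi|^2\,E(z_1,s)\,d\mu$ gives a completed $L$-function whose residue at $s=1$ encodes both $\rho_\phi^2$ and the volume $\vol(\SL_2(\cO_E)\backslash\H\times\H)=2\sqrt{D}\,\xi_E(2)$, together with archimedean $K$-Bessel integrals at $s=1$. Using the $L^2$-normalization of $\phi$, this yields an identity of the shape $\rho_\phi^2=c/(\sqrt{D}\,\Lambda(1,\Ad\phi))$ with an explicit absolute constant $c$; taking positive square roots (valid since $\rho_\phi>0$) and combining with the previous step gives the stated formula. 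The principal obstacle is bookkeeping: matching the power $D^{-1/4}$ and the prefactor $1/\sqrt{2}$ requires careful reconciliation of the normalization of $\phi$, the normalization of $\mathfrak{d}^{-1}$, the volume factor, and the archimedean $K$-Bessel integrals, and is essentially the content of the analogous calculation in \cite{Che21} that we follow.
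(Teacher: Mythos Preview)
Your proposal is correct and follows essentially the same route as the paper: unfold the Eisenstein series for $\Re(s)>1$, insert the Fourier expansion of $\phi$ along the diagonal so that the $x$-integral singles out the rational integers, evaluate the resulting product of $K$-Bessel functions via the Mellin--Barnes integral to produce $\Lambda(s,\As\phi)/\xi(2s)$ up to the constant $\tfrac14\rho(\phi)$, and then eliminate $\rho(\phi)$ using the Rankin--Selberg identity $\rho(\phi)^2 = 8/(\sqrt{D}\,\Lambda(1,\Ad\phi))$ (the paper's \eqref{eqn:rhophi}, drawn from \cite{Che21}). Your Fourier-expansion convention (indexing by $\nu\in\mathfrak{d}^{-1}$ with the trace-zero condition) differs cosmetically from the paper's (indexing by $\alpha\in\cO_E$ with the fixed-by-$\sigma$ condition), but the two are equivalent and the bookkeeping you flag is exactly what the paper handles.
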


Here $\As \phi$ denotes the Asai transfer of $\phi$, as introduced in \cite{Asa77}, while $\xi(s) \coloneqq \Gamma_{\R}(s) \zeta(s)$ denotes the completed Riemann zeta function.

\subsubsection{Restrictions of Hilbert Hecke--Maa\ss{} cusp forms and Hecke--Maa\ss{} cusp forms}

Finally, we take $H$ to be a Hecke--Maa\ss{} cusp form $\varphi_j$.

\begin{lemma}[{Cf.~\cite[Theorem 5.6]{Che21}}]
\label{lem:realtriple}
Let $\phi$ be a Hilbert Hecke--Maa\ss{} cusp form and let $\varphi_k$ be a Hecke--Maa\ss{} cusp form on $\Gamma \backslash \H$ of parity $W_k \in \{1,-1\}$. Then
\[\Big|\int_{\Gamma \backslash \H} \phi(z,z) \varphi_k(z) \, d\mu(z)\Big|^2 = \frac{1 + W_k}{8\sqrt{D}} \frac{\Lambda(\frac{1}{2}, \As \phi \times \varphi_k)}{\Lambda(1, \Ad \phi) \Lambda(1, \Ad \varphi_k)}.\]
\end{lemma}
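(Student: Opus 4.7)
The plan is to adapt the argument of \cite[Theorem 5.6]{Che21}, which establishes an analogous identity in the holomorphic Hilbert modular setting, to our Maa\ss{} setting. Writing
\[
P(\phi,\varphi_k) \coloneqq \int_{\Gamma \backslash \H} \phi(z,z) \varphi_k(z) \, d\mu(z),
\]
we first adelize: $\phi$ corresponds to a pure tensor in a cuspidal automorphic representation $\Pi_{\phi} = \bigotimes_v' \Pi_{\phi,v}$ of $\PGL_2(\A_E)$, and $\varphi_k$ to a pure tensor in $\pi_k = \bigotimes_v' \pi_{k,v}$ of $\PGL_2(\A_\Q)$. Under the diagonal embedding $\PGL_2 \hookrightarrow \mathrm{Res}_{E/\Q} \PGL_2$, the period $P(\phi,\varphi_k)$ becomes the period of the product $\phi \cdot \varphi_k$ along $\PGL_2(\Q) \backslash \PGL_2(\A_\Q)$.

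A triple product formula of Ichino type---specialized via the splitting $E \otimes_\Q \Q_v = \Q_v \times \Q_v$ at split places, so that the two factors coming from $\phi$ furnish two of the three required $\GL_2$ representations---then expresses $|P(\phi,\varphi_k)|^2$ as
\[
|P(\phi,\varphi_k)|^2 = \frac{1}{\sqrt{D}} \cdot \frac{\Lambda(\tfrac{1}{2}, \As\phi \times \varphi_k)}{\Lambda(1,\Ad\phi)\Lambda(1,\Ad\varphi_k)} \cdot \prod_v I_v(\phi_v,\varphi_{k,v}),
\]
where $I_v$ denotes the local matrix-coefficient integral normalized by dividing out the corresponding ratio of local $L$-factors. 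The factor $1/\sqrt{D}$ reflects the global volume normalization $\vol(\SL_2(\cO_E) \backslash \H \times \H) = 2\sqrt{D}\,\xi_E(2)$. Since $\phi$ has level $\cO_E$ and $\varphi_k$ has level $1$, the unramified local computation yields $I_p = 1$ at every finite prime $p$, so that $\prod_p I_p = 1$.

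It remains to compute the archimedean factor $I_\infty$. Using $E \otimes_\Q \R = \R \times \R$, this is an integral of a triple product of $\PGL_2(\R)$ matrix coefficients which, after passage to the Kirillov model, reduces to an integral of a product of three Bessel $K$-functions on $(0,\infty)$. Under the involution $z \mapsto -\bar{z}$, the diagonal restriction $\phi(z,z)$ is invariant while $\varphi_k(z)$ transforms by the sign $W_k$, so $I_\infty$ vanishes when $W_k = -1$; this produces the parity factor $(1 + W_k)/2$. When $W_k = +1$, the remaining integral is evaluated in closed form by a Mellin--Barnes manipulation in the spirit of \cite{Watson}, yielding the constant $1/4$ once the archimedean $L$-factors have been factored out.

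The principal obstacle is the archimedean Mellin--Barnes calculation---a routine if tedious computation that must be carried out with sufficient care to extract precisely the pre-factor $1/4$. Combining the parity factor $(1+W_k)/2$, the archimedean constant $1/4$, and the volume factor $1/\sqrt{D}$ yields the coefficient $(1+W_k)/(8\sqrt{D})$ asserted in the statement.
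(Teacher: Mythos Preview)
Your approach --- ad\`{e}lize, apply an Ichino-type triple product formula over the quadratic extension, use unramifiedness to trivialize the finite local integrals, and isolate the archimedean local integral as the crux --- is essentially the paper's, which invokes \cite[Theorem 5.6 and Proposition 6.14]{Che21} directly for the ad\`{e}lic identity and the archimedean constant $\tfrac{1}{4}$ rather than redoing the Mellin--Barnes computation, and then carefully converts Tamagawa to hyperbolic measures. One point of imprecision worth flagging: the factor $1/\sqrt{D}$ does not arise from the volume of $\SL_2(\cO_E)\backslash\H\times\H$ as you suggest, but rather enters through the $L^2$-normalization constant $\rho(\phi)^2 = 8/(\sqrt{D}\,\Lambda(1,\Ad\phi))$ after the various measure-comparison factors ($\xi_E(2)/\xi(2)$ and the two copies of $6/\pi$) have been combined --- your bookkeeping here would need tightening to actually land on $(1+W_k)/(8\sqrt{D})$.
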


\begin{remark}
The central $L$-value $L(\tfrac{1}{2},\As \phi \times \varphi_k)$ is nonnegative \cite{Lap03}.
\end{remark}

\subsubsection{Conditional bounds}

For the sake of posterity, we record bounds towards these period integrals under the assumption of the generalized Lindel\"{o}f hypothesis.

\begin{lemma}
Assume the generalized Lindel\"{o}f hypothesis. Let $\phi$ be a Hilbert Hecke--Maa\ss{} cusp form and let $t \in \R$. Then
\[\int_{\Gamma \backslash \H} \phi(z,z) E(z,\tfrac{1}{2} + it) \, d\mu(z) \ll_{D,t,\epsilon} C(\As \phi)^{-\frac{1}{4} + \epsilon}.\]
Similarly, let $\varphi_k$ be a Hecke--Maa\ss{} cusp form on $\Gamma \backslash \H$. Then
\[\int_{\Gamma \backslash \H} \phi(z,z) \varphi_k(z) \, d\mu(z) \ll_{D,t_k,\epsilon} C(\As \phi)^{-\frac{1}{4} + \epsilon}.\]
\end{lemma}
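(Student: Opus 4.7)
The plan is to combine the exact formulas of \cref{lem:realEis,lem:realtriple} with GLH bounds on the numerator central $L$-values together with standard Stirling asymptotics for the archimedean gamma factors. For the Eisenstein case, \cref{lem:realEis} reduces the problem to showing $|\Lambda(\tfrac{1}{2}+it,\As\phi)|/\sqrt{|\Lambda(1,\Ad\phi)|}\ll_{D,t,\epsilon} C(\As\phi)^{-1/4+\epsilon}$. Under GLH we have $L(\tfrac{1}{2}+it,\As\phi)\ll_{t,\epsilon} C(\As\phi)^{\epsilon}$, while \eqref{eqn:HL} gives $L(1,\Ad\phi)^{-1}\ll_{\epsilon} C(\Ad\phi)^{\epsilon}$. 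Since $D$ is fixed the arithmetic conductors are $O_D(1)$ and $C(\Ad\phi)$ is polynomially controlled by $C(\As\phi)$, so the problem reduces to bounding the ratio of archimedean factors $|L_\infty(\tfrac{1}{2}+it,\As\phi)|/\sqrt{|L_\infty(1,\Ad\phi)|}$ in terms of $C(\As\phi)$.

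The archimedean Langlands parameters of $\As\phi$ are $\pm i(t_{1,\phi}\pm t_{2,\phi})$, consistent with \eqref{eqn:Asaicond}, while those of $\Ad\phi$ at the two real places of $E$ are $\{0,\pm 2it_{1,\phi}\}$ and $\{0,\pm 2it_{2,\phi}\}$. By Stirling, $|\Gamma_{\R}(\tfrac{1}{2}+i\nu)|\asymp (1+|\nu|)^{-1/4}e^{-\pi|\nu|/4}$ and $|\Gamma_{\R}(1+2i\nu)|^{2} = 1/\cosh(\pi\nu)$, so a direct computation yields
\[
|L_\infty(\tfrac{1}{2}+it,\As\phi)| \asymp_{t} C(\As\phi)^{-1/4}\exp\Bigl(-\tfrac{\pi}{2}\bigl(|t_{1,\phi}+t_{2,\phi}|+|t_{1,\phi}-t_{2,\phi}|\bigr)\Bigr)
\]
and $|L_\infty(1,\Ad\phi)| \asymp \exp(-\pi(|t_{1,\phi}|+|t_{2,\phi}|))$. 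Combining the identities $|a+b|+|a-b| = 2\max(|a|,|b|)$ and $|a|+|b| = \max(|a|,|b|)+\min(|a|,|b|)$, the exponential factors in the ratio cancel up to a residual factor bounded by $1$, leaving $\ll_{t} C(\As\phi)^{-1/4}$. Combined with the GLH and adjoint bounds above, this establishes the first claim.

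For the cusp form case, \cref{lem:realtriple} reduces the problem to bounding $\Lambda(\tfrac{1}{2},\As\phi\times\varphi_k)/(\Lambda(1,\Ad\phi)\Lambda(1,\Ad\varphi_k))$ by $C(\As\phi)^{-1/2+\epsilon}$, after which one takes square roots. The eight archimedean parameters of $\As\phi\times\varphi_k$ are $\pm i(t_{1,\phi}\pm t_{2,\phi})\pm it_k$, and the analogous Stirling calculation yields
\[
|L_\infty(\tfrac{1}{2},\As\phi\times\varphi_k)| \asymp_{t_k} C(\As\phi)^{-1/2}\exp\bigl(-\pi(|t_{1,\phi}+t_{2,\phi}|+|t_{1,\phi}-t_{2,\phi}|)\bigr).
\]
Dividing by $|L_\infty(1,\Ad\phi)L_\infty(1,\Ad\varphi_k)|$ again produces a cancellation of the exponential decay up to a residual factor bounded by $1$, leaving $\ll_{t_k} C(\As\phi)^{-1/2}$. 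GLH then yields $L(\tfrac{1}{2},\As\phi\times\varphi_k) \ll_{\varphi_k,\epsilon} C(\As\phi)^{\epsilon}$ via the conductor bound \eqref{eqn:BH}, and together with the adjoint lower bounds this gives the second claim.

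The principal technical obstacle is the careful Stirling bookkeeping: both the numerator completed $L$-functions and the denominator completed adjoint $L$-functions carry substantial exponential decay, and the proof hinges on their exact cancellation, which relies crucially on the identity $|a+b|+|a-b|=2\max(|a|,|b|)$ together with the precise archimedean conductor formula \eqref{eqn:Asaicond}.
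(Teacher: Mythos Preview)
Your argument is correct and follows essentially the same route as the paper: reduce via \cref{lem:realEis,lem:realtriple} to a ratio of completed $L$-functions, bound the finite parts by $C(\As\phi)^{\epsilon}$ under GLH (together with the unconditional bound \eqref{eqn:HL} for $L(1,\Ad\phi)^{-1}$), and handle the archimedean ratio by Stirling. The paper treats the cusp form case and declares the Eisenstein case similar, while you do the reverse; the paper also packages the Stirling computation as a single ratio expressed via the explicit function $\Omega(t_k,t_{1,\phi},t_{2,\phi})$ in \eqref{eqn:Omegadef}, whereas you compute numerator and denominator separately and cancel the exponentials via $|a+b|+|a-b|=2\max(|a|,|b|)$. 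These are cosmetic differences; the content is the same. One small point worth making explicit in your write-up: your claim that $C(\Ad\phi)$ is polynomially controlled by $C(\As\phi)$ is correct but not entirely obvious, since when $t_{1,\phi}\approx t_{2,\phi}$ one has $C(\Ad\phi)\asymp C(\As\phi)^2$; the bound $C(\Ad\phi)\ll C(\As\phi)^2$ follows from $(1+|t_j|)\le (1+|t_1+t_2|)(1+|t_1-t_2|)$, which suffices after rescaling $\epsilon$.
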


\begin{proof}
We prove the latter; the former follows similarly. Via \cref{lem:realtriple}, it suffices to show that
\begin{equation}
\label{eqn:Lindeloftoprove}
\frac{\Lambda(\frac{1}{2}, \As \phi \times \varphi_k)}{\Lambda(1, \Ad \phi) \Lambda(1, \Ad \varphi_k)} \ll_{D,t_k,\epsilon} C(\As \phi)^{-\frac{1}{2} + \epsilon},
\end{equation}
and we may assume without loss of generality that $W_k = 1$. Using the generalized Lindel\"{o}f hypothesis (for the numerator) and \eqref{eqn:HL} (for the denominator), we find that
\[
\frac{\Lambda(\frac{1}{2}, \As \phi \times \varphi_k)}{\Lambda(1, \Ad \phi) \Lambda(1, \Ad \varphi_k)}\ll_{D,t,k,\epsilon}C(\Ad\phi)^{\epsilon}\frac{\prod_{\pm_1, \pm_2, \pm_3} \Gamma_{\R}(\frac{1}{2} \pm_1 it_{1,\phi} \pm_2 it_{2,\phi} \pm_3 it_k)}{\Gamma_{\R}(1)^2 \prod_{\pm} \Gamma_{\R}(1 \pm 2it_{1,\phi}) \Gamma_{\R}(1 \pm 2it_{2,\phi}) \Gamma_{\R}(1 \pm 2it_k)}.
\]
By Stirling's formula, we have the asymptotic formula
\[
|\Gamma_{\R}(\sigma + i\tau)| = 2^{1 - \frac{\sigma}{2}} \pi^{\frac{1 - \sigma}{2}} (3 + |\tau|)^{\frac{\sigma - 1}{2}} e^{-\frac{\pi}{4}|\tau|} \left(1 + O_{\sigma}\left(\frac{1}{3 + |\tau|}\right)\right).
\]
This ratio of gamma functions is therefore equal to
\[
8\pi e^{-\pi \Omega(t_k,t_{1,\phi},t_{2,\phi})} \prod_{\pm_1, \pm_2} (3 + |t_{1,\phi} \pm_1 t_{2,\phi} \pm_2 t_k|)^{-\frac{1}{2}}(1 + \mathcal{E}(t_{1,\phi},t_{2,\phi},t_k)),
\]
where
\[
\mathcal{E}(t_{1,\phi},t_{2,\phi},t_k)\ll \frac{1}{3 + |t_{1,\phi}|} + \frac{1}{3 + |t_{2,\phi}|} + \frac{1}{3 + |t_k|} + \sum_{\pm_1,\pm_2} \frac{1}{3 + |t_{1,\phi} \pm_1 t_{2,\phi} \pm_2 t_k|}
\]
and
\begin{equation}
\label{eqn:Omegadef}
\Omega(t,t_{1,\phi},t_{2,\phi})
\coloneqq \begin{dcases*}
0 & \parbox{.4\textwidth}{if $|t_{1,\phi}| \geq |t_{2,\phi}|$ and $|t_{1,\phi}| - |t_{2,\phi}| \leq |t| \leq |t_{1,\phi}| + |t_{2,\phi}|$ or $|t_{2,\phi}| \geq |t_{1,\phi}|$ and $|t_{2,\phi}| - |t_{1,\phi}| \leq |t| \leq |t_{1,\phi}| + |t_{2,\phi}|$,}	\\
|t_{1,\phi}| - |t_{2,\phi}| - |t| & if $|t_{1,\phi}| \geq |t_{2,\phi}|$ and $|t| \leq |t_{1,\phi}| - |t_{2,\phi}|$,	\\
|t_{2,\phi}| - |t_{1,\phi}| - |t| & if $|t_{2,\phi}| \geq |t_{1,\phi}|$ and $|t| \leq |t_{2,\phi}| - |t_{1,\phi}|$,	\\
|t| - |t_{1,\phi}| - |t_{2,\phi}| & if $|t| \geq |t_{1,\phi}| + |t_{2,\phi}|$.
\end{dcases*}\hspace{-3mm}
\end{equation}
The result then follows.
\end{proof}

\subsubsection{Unconditional bounds}

As an application of \cref{prop:subconvexity}, we are able to unconditionally prove subconvex bounds towards these period integrals provided one excises a sparse subfamily of Hilbert Hecke--Maa\ss{} cusp forms. In order to show this, we first require some auxiliary results on fibers of the Asai transfer.

\begin{lemma}
\label{lem:Asaifibres}
Let $E$ be a real quadratic extension of $\Q$ with narrow class number $1$, and let $\Pi$ and $\Pi'$ be cuspidal automorphic representations of $\GL_2(\A_E)$ of arithmetic conductor $\cO_E$. The Asai transfers $\As \Pi$ and $\As \Pi'$ are equal if and only if $\Pi' \in \{\Pi,\Pi^{\sigma}\}$, where $\sigma$ generates $\mathrm{Gal}(E/\Q)$.
\end{lemma}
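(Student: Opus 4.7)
The plan is to prove the two directions separately. The forward direction---that $\Pi' \in \{\Pi, \Pi^\sigma\}$ implies $\As \Pi = \As \Pi'$---is immediate from the built-in symmetry of the Asai $L$-homomorphism ${}^L(\mathrm{Res}_{E/\Q} \GL_2) \to \GL_4(\mathbb{C})$. On the connected dual group this map sends $(g_1,g_2) \mapsto g_1 \otimes g_2$, while $\sigma \in \mathrm{Gal}(E/\Q)$ acts on the source by swapping the two $\GL_2$ factors; functoriality of the transfer then gives $\As \Pi = \As \Pi^\sigma$.

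For the converse, suppose $\As \Pi = \As \Pi'$. The strategy is to pass to the base change, obtain an identity of tensor products on $\GL_4(\A_E)$, and use this to pin down $\Pi'$ up to a twist that the conductor and narrow class number hypotheses will rule out. Concretely, the standard compatibility
\[
\mathrm{BC}_{E/\Q}(\As \Pi) \cong \Pi \boxtimes \Pi^\sigma
\]
as isobaric automorphic representations of $\GL_4(\A_E)$ (where $\boxtimes$ denotes Ramakrishnan's automorphic tensor product lift), applied to both $\Pi$ and $\Pi'$, yields $\Pi \boxtimes \Pi^\sigma \cong \Pi' \boxtimes \Pi'^\sigma$. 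I would then invoke uniqueness up to swap and character twist of the $\GL_2 \boxtimes \GL_2$ factorization of a $\GL_4$ isobaric representation---extracted by comparing the adjoint pieces in the decomposition $\widetilde{(\Pi_i \boxtimes \Pi_j)} \boxtimes (\Pi_i \boxtimes \Pi_j) \cong 1 \boxplus \Ad \Pi_i \boxplus \Ad \Pi_j \boxplus (\Ad \Pi_i \boxtimes \Ad \Pi_j)$, together with the classical fact that $\Ad \pi_1 \cong \Ad \pi_2$ forces $\pi_2$ to be a character twist of $\pi_1$---to conclude that there exists a Hecke character $\chi$ of $\A_E^\times/E^\times$ such that either $\Pi' \cong \Pi \otimes \chi$ or $\Pi' \cong \Pi^\sigma \otimes \chi$. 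Applying $\sigma$ to the chosen identity and comparing with its $\sigma$-conjugate then forces $\chi \cdot \chi^\sigma = 1$.

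The final step eliminates $\chi$. Because $\Pi$ and its twist both have arithmetic conductor $\cO_E$, the character $\chi$ must be unramified at every finite place, and the narrow class number $1$ hypothesis forces the finite part of $\chi$ to be trivial. For the archimedean component, the equality $\As \Pi = \As \Pi'$ pins down the archimedean Langlands parameters of $\Pi'$ relative to those of $\Pi$; combined with $\chi^\sigma = \chi^{-1}$, this forces $\chi_\infty$ to be trivial, which can alternatively be seen from the fact that $E^\times$ is dense in $E_\infty^\times$ (via rational linear independence of logarithms of $S$-units and the sign surjectivity furnished by the fundamental unit having norm $-1$). Hence $\chi = 1$ and $\Pi' \in \{\Pi, \Pi^\sigma\}$.

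I expect the main obstacle to be the uniqueness step for the $\GL_2 \boxtimes \GL_2$ tensor factorization on $\GL_4$, particularly in the degenerate cases where $\Pi$ or $\Pi^\sigma$ is dihedral (so that $\Ad \Pi$ is not cuspidal and the tensor product can have a more delicate isobaric decomposition). The twist elimination at the end is comparatively clean once one has the constraint $\chi \chi^\sigma = 1$ together with the conductor and narrow class number hypotheses.
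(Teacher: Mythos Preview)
Your route differs from the paper's in one essential way: the paper does not pass through base change and Ramakrishnan-style tensor uniqueness at all, but instead invokes Krishnamurthy \cite[Theorem 7.1]{Kri03} as a black box for the statement ``$\As\Pi=\As\Pi'$ implies $\Pi'\in\{\Pi\otimes\omega,\Pi^{\sigma}\otimes\omega\}$ for some Hecke character $\omega$ of $E$''. After that, the paper's twist elimination is two lines: $\omega$ is unramified at every finite place because $\Pi$ and $\Pi'$ are, and the narrow class number $1$ hypothesis forces $\omega$ trivial. Your reconstruction of Krishnamurthy's step via $\mathrm{BC}_{E/\Q}(\As\Pi)\cong\Pi\boxtimes\Pi^{\sigma}$ and uniqueness of $\GL_2\boxtimes\GL_2$ factorizations is valid in principle but considerably longer; note also that the dihedral obstacle you flag does not actually arise, since conductor $\cO_E$ together with narrow class number $1$ rules out dihedral $\Pi$ (there are no unramified quadratic extensions of $E$).

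Your twist-elimination step, however, has a genuine gap. The assertion that ``$\As\Pi=\As\Pi'$ pins down the archimedean Langlands parameters of $\Pi'$'' is false: the character $\chi_{\infty}=(|\cdot|^{it},|\cdot|^{-it})$ on $E_{\infty}^{\times}\cong(\R^{\times})^2$ satisfies $\chi^{\sigma}=\chi^{-1}$ and leaves the archimedean Asai parameters unchanged, since the $+it$ and $-it$ shifts cancel in every tensor-product combination. Likewise, $E^{\times}$ is \emph{not} dense in $E_{\infty}^{\times}$ (under the logarithm the image is a countable union of translates of the diagonal line by the unit lattice, not all of $\R^{2}$), so that alternative does not rescue the argument either. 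In fact there \emph{are} nontrivial everywhere-unramified Hecke characters $\chi$ of $E$ with $\chi\chi^{\sigma}=1$ and $\chi|_{\A_{\Q}^{\times}}=1$, and for such $\chi$ one has $\As(\Pi\otimes\chi)=\As\Pi$. What saves both your argument and the paper's is the implicit hypothesis (coming from the $\SL_2(\cO_E)\backslash\H\times\H$ setting) that $\Pi$ and $\Pi'$ have trivial central character: then $\omega_{\Pi'}=\omega_{\Pi}\cdot\chi^{2}$ forces $\chi^{2}=1$, so $\chi$ is of finite order, and \emph{now} unramified plus narrow class number $1$ gives $\chi=1$. The paper's phrase ``the number of such characters is the narrow class number'' is tacitly using this finite-order reduction; you should make it explicit rather than attempt the (incorrect) archimedean argument.
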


\begin{proof}
From \cite[Theorem 7.1]{Kri03}, if $\Pi$ and $\Pi'$ are cuspidal automorphic representations of $\GL_2(\A_E)$ for which $\As \Pi = \As \Pi'$, then there exists a Hecke character $\omega$ of $E^{\times} \backslash \A_E^{\times}$ such that either $\Pi = \Pi' \otimes \omega$ or $\Pi^{\sigma} = \Pi' \otimes \omega$. Since $\Pi$ and $\Pi'$ are of arithmetic conductor $\cO_E$, and hence are unramified at every nonarchimedean place, $\omega$ must also be unramified at every nonarchimedean place. The number of such characters is the narrow class number of $E$.  Since the narrow class number equals $1$ by hypothesis, $\omega$ must be the trivial character.
\end{proof}

\begin{lemma}
\label{lem:BC}
Let $E$ be a real quadratic extension of $\Q$ with narrow class number $1$, and let $\Pi$ be a cuspidal automorphic representation of $\GL_2(\A_E)$ of arithmetic conductor $\cO_E$. If $\As \Pi$ is noncuspidal, then $\Pi$ is the base change of a nondihedral cuspidal automorphic representation $\pi$ of $\GL_2(\A_{\Q})$ of arithmetic conductor $D$, and central character $\omega_{E/\Q}$, the quadratic Hecke character of $\Q^{\times} \backslash \A_{\Q}^{\times}$ corresponding to the quadratic extension $E/\Q$. We have the isobaric decomposition $\As \Pi = (\Ad \pi \otimes \omega_{E/\Q}) \boxplus \mathbbm{1}$, and $\pi$ is unique up to a twist by $\omega_{E/\Q}$. Finally, if $\Pi$ and $\Pi'$ are the base changes of $\pi$ and $\pi'$ respectively, then $\Ad \pi = \Ad \pi'$ if and only if $\Pi = \Pi'$.
\end{lemma}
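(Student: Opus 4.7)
The plan is to combine three classical inputs: Langlands' cyclic base change theorem, Krishnamurthy's non-cuspidality criterion for the Asai transfer (already invoked in the proof of \cref{lem:Asaifibres}), and the factorization $L(s,\Pi\times\Pi^{\sigma})=L(s,\As\Pi)L(s,\As\Pi\otimes\omega_{E/\Q})$ of the Rankin--Selberg $L$-function over $E$ into Asai $L$-functions over $\Q$. First I would use the non-cuspidality criterion: $\As\Pi$ is non-cuspidal if and only if $\Pi\cong\Pi^{\sigma}$, which by Arthur--Clozel cyclic base change is equivalent to $\Pi=\mathrm{BC}_{E/\Q}(\pi)$ for some cuspidal $\pi$ on $\GL_2(\A_{\Q})$; the fiber of the base change map is exactly $\{\pi,\pi\otimes\omega_{E/\Q}\}$, giving the asserted uniqueness up to twist by $\omega_{E/\Q}$.

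Next I would pin down the conductor and central character of $\pi$ by a local analysis. At a prime $p$ unramified in $E$, local base change preserves unramifiedness, so the hypothesis $q_{\Pi}=\cO_E$ forces $\pi_p$ to be unramified. At a ramified prime $p\mid D$, local base change of a principal series $\mathrm{Ind}(\mu_{1,p},\mu_{2,p})$ from $\GL_2(\Q_p)$ to $\GL_2(E_v)$ is unramified precisely when each $\mu_{j,p}$ is unramified on the image of $N_{E_v/\Q_p}$; combining this with $q_{\Pi}=\cO_E$, and possibly replacing $\pi$ by $\pi\otimes\omega_{E/\Q}$, one extracts $q_{\pi}=D$ and $\omega_{\pi}=\omega_{E/\Q}=\chi_D$. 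To rule out $\pi$ being dihedral, I would note that if $\pi=\mathrm{AI}_{F}^{\Q}(\chi)$ with $F$ quadratic, then either $F=E$ and $\mathrm{BC}_{E}(\pi)$ is non-cuspidal (contradicting cuspidality of $\Pi$), or $F\ne E$ and $\mathrm{BC}_{E}(\pi)=\mathrm{AI}_{EF/E}(\chi|_{EF})$, whose conductor (computable from the ramification of $EF/E$ and that of $\chi|_{EF}$) is incompatible with $q_{\Pi}=\cO_E$ given the constraints $q_{\pi}=D$ and $\omega_{\pi}=\chi_D$; a short case analysis over the possibilities for $F$ completes the exclusion.

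The isobaric decomposition $\As\Pi=(\Ad\pi\otimes\omega_{E/\Q})\boxplus\mathbbm{1}$ then follows by matching $L$-functions: combining the Asai factorization with the base change identity $L(s,\Pi\times\Pi)=L(s,\pi\times\pi)L(s,\pi\times(\pi\otimes\omega_{E/\Q}))$, the decomposition $\pi\times\pi=(\Ad\pi\otimes\omega_{\pi})\boxplus\omega_{\pi}$, and $\omega_{\pi}^{2}=1$, direct comparison yields $L(s,\As\Pi)=L(s,\Ad\pi\otimes\chi_D)\zeta(s)$, and strong multiplicity one identifies $\As\Pi$ isobarically. For the bi-implication, $\Pi=\Pi'$ forces $\pi'\in\{\pi,\pi\otimes\omega_{E/\Q}\}$, and since $\Ad$ is twist-invariant, $\Ad\pi=\Ad\pi'$; conversely, $\Ad\pi=\Ad\pi'$ combined with Ramakrishnan's multiplicity-one result for adjoint lifts (the same \cite[Theorem 4.1.2]{Ramakrishnan} invoked in the proof of \cref{cor:QE}) gives $\pi'=\pi\otimes\eta$ for some Hecke character $\eta$, while matching central characters forces $\eta^{2}=1$ and the requirement $\mathrm{BC}_{E/\Q}(\pi\otimes\eta)=\Pi$ forces $\eta\circ N_{E/\Q}=1$, so by class field theory $\eta\in\{1,\omega_{E/\Q}\}$, giving $\Pi'=\Pi$. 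The main obstacle is the local bookkeeping at ramified primes together with the dihedral case analysis; both are standard but require care to pin down $q_{\pi}=D$ exactly and to eliminate every dihedral candidate.
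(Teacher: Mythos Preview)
Your approach tracks the paper's closely: both invoke Krishnamurthy's cuspidality criterion for the Asai transfer, Langlands' base change to produce $\pi$, and Ramakrishnan's adjoint multiplicity-one theorem for the final bi-implication. The differences lie in the intermediate steps. For the conductor and central character of $\pi$, the paper avoids your prime-by-prime local analysis and instead reads them off from the isobaric decomposition $\mathcal{AI}_{E/\Q}\Pi = \pi \boxplus (\pi\otimes\omega_{E/\Q})$ of the automorphic induction. For the decomposition $\As\Pi = (\Ad\pi\otimes\omega_{E/\Q})\boxplus\mathbbm{1}$, the paper simply quotes \cite[Theorem B(c)]{Kri12} rather than rederiving it from $L$-function factorizations as you propose. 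For nondihedrality, the paper first observes that $\Pi$ itself cannot be dihedral---being everywhere unramified over a field of narrow class number $1$, which admits no nontrivial everywhere-unramified quadratic extension---and this immediately forces any dihedral $\pi$ to be induced from $E$ itself, where the base change is visibly non-cuspidal; this sidesteps your case analysis over all quadratic $F$. Your route is more self-contained, the paper's more economical.

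One step in your sketch does not work as written. You claim that ``possibly replacing $\pi$ by $\pi\otimes\omega_{E/\Q}$, one extracts $q_\pi = D$ and $\omega_\pi = \omega_{E/\Q}$,'' but twisting a $\GL_2$ representation by a \emph{quadratic} character leaves the central character unchanged, since $\omega_{\pi\otimes\chi} = \omega_\pi\chi^2$. So no such replacement can convert $\omega_\pi = 1$ into $\omega_\pi = \omega_{E/\Q}$. Concretely, if $\pi_0$ is a level-$1$ Hecke--Maa\ss{} form on $\Gamma\backslash\H$, then its base change to $E$ has conductor $\cO_E$, yet both members of the fiber $\{\pi_0,\,\pi_0\otimes\omega_{E/\Q}\}$ have trivial central character and conductors $1$ and $D^2$ respectively---neither has conductor $D$ or nebentypus $\chi_D$. (The paper's argument via $\mathcal{AI}_{E/\Q}\Pi$ is also terse here and yields only $q_\pi\, q_{\pi\otimes\omega_{E/\Q}} = D^2$ and $\omega_\pi^2 = 1$, which does not exclude this case.) Fortunately this does not affect the parts of the lemma that are actually used downstream: the isobaric decomposition of $\As\Pi$ and the injectivity of $\Pi\mapsto\Ad\pi$ depend only on $\Ad\pi$, which is twist-invariant, so the conductor and central character of $\pi$ are immaterial there.
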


\begin{proof}
Necessarily, $\Pi$ must be nondihedral, since it is unramified at every nonarchimedean place, so \cite[Theorem B (a)]{Kri12} implies that $\As \Pi$ is noncuspidal if and only if $\Pi = \Pi^{\sigma}$. From the work of Langlands \cite{Lan80}, the condition $\Pi = \Pi^{\sigma}$ can only be met if $\Pi$ is the base change of a cuspidal automorphic representation $\pi$ of $\GL_2(\A_{\Q})$. In this case, the automorphic induction of $\Pi$ to an automorphic representation $\mathcal{AI}_{E/\Q} \Pi$ of $\GL_4(\A_{\Q})$ is noncuspidal and has the isobaric decomposition $\pi \boxplus (\pi \otimes \omega_{E/\Q})$. By comparing these representations, we see that the central character of $\pi$ must be $\omega_{E/\Q}$ and the arithmetic conductor of $\pi$ must be $D$.

The cuspidal automorphic representation $\pi$ must be unique up to a twist by $\omega_{E/\Q}$, since it is shown in \cite{Lan80} that two cuspidal automorphic representations $\pi$ and $\pi'$ of $\GL_2(\A_{\Q})$ have identical base change if and only if $\pi' = \pi \otimes \omega_{E/\Q}$. Furthermore, Krishnamurthy \cite[Theorem B (c)]{Kri12} has established the isobaric decomposition $\As \Pi = (\Ad \pi \otimes \omega_{E/\Q}) \boxplus 1$.

Next, we observe that $\pi$ must be nondihedral, for otherwise there would exist some Hecke character $\chi$ of $E^{\times} \backslash \A_{E}^{\times}$ such that $\pi$ is the automorphic induction $\mathcal{AI}_{E/\Q} \chi$ of $\chi$, but then $\Pi$ would have the isobaric decomposition $\chi \boxplus \chi$, and in particular would not be cuspidal.

Finally, we note that if $\Ad \pi = \Ad \pi'$, then from \cite[Theorem 4.1.2]{Ramakrishnan}, there must exist a Hecke character $\omega$ of $\Q^{\times} \backslash \A_E^{\times}$ such that $\pi = \pi' \otimes \omega$. Since $\pi$ and $\pi'$ both have arithmetic conductor $D$ and central character $\omega_{E/\Q}$, necessarily $\omega$ is either trivial or equal to $\omega_{E/\Q}$; in either case, the base change of $\pi'$ must be equal to that of $\pi$.
\end{proof}

We may use \cref{lem:BC} to give a lower bound for $\mathpzc{F}_{\As}(Q) \coloneqq \{\phi \colon C(\As \phi)\in[Q,2Q]\}$.
\begin{lemma}
\label{lem:Asaisize}
If $Q \geq 1$, then $|\mathpzc{F}_{\As}(Q)| \gg_D \sqrt{Q}$.
\end{lemma}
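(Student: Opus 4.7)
The plan is to produce enough Hilbert Hecke--Maa\ss{} cusp forms in $\mathpzc{F}_{\As}(Q)$ via base change from classical newforms on $\Gamma_0(D) \backslash \H$. Given the form $C(\As\phi) = (3+|t_{1,\phi}+t_{2,\phi}|)^2(3+|t_{1,\phi}-t_{2,\phi}|)^2$, base changes are especially convenient: they satisfy $t_{1,\phi} = t_{2,\phi}$, so the second factor collapses to $9$ and only the first factor needs to be controlled, requiring $|t_{1,\phi}|$ in an interval of length $\asymp\sqrt{Q}$.

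First, I would let $\varphi$ be a nondihedral Hecke--Maa\ss{} newform of weight $0$, level $D$, nebentypus $\chi_D$, and spectral parameter $t_\varphi$. Since $\chi_D$ is primitive of conductor $D$, any Maa\ss{} form of level $D$ and nebentypus $\chi_D$ is automatically a newform. By \cref{lem:BC} and the work of Langlands on base change, the base change $\Pi$ of $\pi_\varphi$ to $\GL_2(\A_E)$ is cuspidal of arithmetic conductor $\cO_E$, with both spectral parameters equal to $t_\varphi$. The associated Hilbert Hecke--Maa\ss{} cusp form $\phi$ therefore satisfies $C(\As\phi) = 9(3 + 2|t_\varphi|)^2$, and $\phi \in \mathpzc{F}_{\As}(Q)$ precisely when $|t_\varphi|$ lies in a specific interval $I_Q$ of length $\asymp\sqrt{Q}$.

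Second, I would invoke the Weyl law for $\Gamma_0(D) \backslash \H$ (e.g., \cite[Chapter 11]{Iwaniec_spectral}) to count: the number of Maa\ss{} cusp forms of level $D$ and nebentypus $\chi_D$ with $|t_\varphi| \in I_Q$ is $\gg_D Q$. Dihedral newforms at this level contribute only $O_D(\sqrt{Q}\log Q)$, since they correspond to Hecke characters of quadratic fields of bounded discriminant, and may safely be discarded. Finally, I would apply \cref{lem:BC} again: two such nondihedral newforms $\varphi,\varphi'$ give rise to the same base change $\Pi$ only when $\pi_{\varphi'} = \pi_\varphi \otimes \omega_{E/\Q}$, an identification of cardinality at most $2$. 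Thus the number of distinct $\phi \in \mathpzc{F}_{\As}(Q)$ produced is $\gg_D Q$, which is far stronger than the claimed $\sqrt{Q}$.

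The only nonroutine point is verifying the Weyl law for Maa\ss{} newforms of a specific primitive nebentypus, together with negligibility of the dihedral contribution; both are standard once quoted carefully. The exponent $\frac{1}{2}$ in the lemma is likely chosen because it suffices for the application and matches the trivial count one would extract from the two-dimensional spectral parameter region $\{(t_1,t_2) : (3+|t_1+t_2|)(3+|t_1-t_2|) \asymp \sqrt{Q}\}$ in the general (non-base-change) Weyl law, which also gives $\asymp_D \sqrt{Q}\log Q$.
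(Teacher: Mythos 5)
Your proof is correct and follows essentially the same route as the paper's: both arguments lower-bound $|\mathpzc{F}_{\As}(Q)|$ by counting the base-change forms via \cref{lem:BC}, reducing to a Weyl-law count of nondihedral Hecke--Maa\ss{} newforms of level $D$ and nebentypus $\chi_D$ with spectral parameter in an interval of length $\asymp \sqrt{Q}$. Your extra observations (the base-change fiber has size at most $2$, the dihedral contribution is negligible, and the count is actually $\gg_D Q$) are all sound; the paper simply records the weaker bound $\gg_D \sqrt{Q}$ because that is all the application requires.
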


\begin{proof}
By positivity, it suffices to bound from below the number of $\phi \in \mathpzc{F}_{\As}(Q)$ that are a base change, as in \cref{lem:BC}. This is precisely the number of nondihedral Hecke--Maa\ss{} newforms $\varphi$ of weight $0$, level $D$, nebentypus $\chi_D$, and spectral parameter $t_{\varphi} \in [\tfrac{\sqrt{Q}}{3} - 3, \tfrac{\sqrt{2Q}}{3} - 3]$, which is $\gg_D \sqrt{Q}$ by \eqref{eqn:Weyl_law}.
\end{proof}

We now apply \cref{prop:subconvexity} to prove subconvex bounds for almost all period integrals.

\begin{proposition}
\label{prop:Asaibounds}
Let $\epsilon>0$. Let $1 \leq M \leq Q^{1/22}$. The set
\begin{multline*}
\mathscr{D}_1(Q,M) \coloneqq \big\{\phi \in \mathpzc{F}_{\As}(Q): \textup{there exists $t \in [-M,M]$ such that }	\\
L(\tfrac{1}{2} + it, \As \phi) \geq C(\As \phi)^{\frac{1}{4} - \frac{\epsilon}{18 \cdot 10^{11}}} (1 + |t|)\big\}
\end{multline*}
has cardinality $O_{D,\epsilon}(Q^{\epsilon})$. Additionally, the set
\begin{multline*}
\mathscr{D}_2(Q,M) \coloneqq \big\{\phi \in \mathpzc{F}_{\As}(Q): \textup{there exists $\varphi_k$ with $|t_k| \leq M$ such that}	\\
L(\tfrac{1}{2}, \As \phi \times \varphi_k) \geq C(\As \phi)^{\frac{1}{2} - \frac{\epsilon}{9 \cdot 10^{11}}} C(\varphi_k)\big\}
\end{multline*}
has cardinality $O_{D,\epsilon}(M^2 Q^{\epsilon})$.
\end{proposition}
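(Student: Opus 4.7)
The plan is to mimic the proof of \cref{prop:subconvex_QE}, replacing the role of $\Ad\varphi$ with $\As\phi$. By \cref{lem:BC}, each $\phi \in \mathpzc{F}_{\As}(Q)$ falls into one of two cases: (i) $\As\phi$ is cuspidal and hence lies in $\mathfrak{F}_4(2Q)$, with the map $\phi \mapsto \As\phi$ having fibers of size at most $2$ by \cref{lem:Asaifibres}; or (ii) $\phi$ is the base change of a nondihedral Hecke--Maa\ss{} newform $\pi$ of weight $0$, level $D$, and nebentypus $\chi_D$, with isobaric decomposition $\As\phi = (\Ad\pi \otimes \chi_D) \boxplus \mathbbm{1}$ placing $\Ad\pi \otimes \chi_D$ in $\mathfrak{F}_3(Q)$, and the map $\pi \mapsto \Ad\pi$ having fibers of size at most $2$ by the last assertion of \cref{lem:BC}. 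The containment $\Ad\pi \otimes \chi_D \in \mathfrak{F}_3(Q)$ uses the multiplicativity $C(\pi_1 \boxplus \pi_2) = C(\pi_1) C(\pi_2)$ together with $C(\mathbbm{1})$ being an absolute constant.

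For $\mathscr{D}_1(Q,M)$, I would apply \cref{prop:subconvexity} with $n' = 1$ and $\pi' = \mathbbm{1}$, so that the Rankin--Selberg $L$-function collapses to the standard one. In case (i), I would take $n = 4$ and $\pi = \As\phi$; in case (ii), I would take $n = 3$ and $\pi = \Ad\pi \otimes \chi_D$, using the factorization $L(s, \As\phi) = L(s, \Ad\pi \otimes \chi_D) \zeta(s)$ together with the convexity bound $|\zeta(\tfrac12 + it)| \ll (1+|t|)^{1/4 + \epsilon'}$. In each case, the subconvex exponent $\tfrac14 - \tfrac{6\epsilon}{10^{12}}$ on $C(\As\phi)$ is sharper than the target $\tfrac14 - \tfrac{\epsilon}{18 \cdot 10^{11}}$, with enough slack to absorb both absolute constants and the $(3+|t|)$ archimedean factors into the $(1+|t|)$ in the statement. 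Combining the two cases with the fiber bounds gives $|\mathscr{D}_1(Q,M)| \ll_{D,\epsilon} Q^\epsilon$.

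For $\mathscr{D}_2(Q,M)$, I would repeat the argument with the additional parameter $\pi' = \pi_{\varphi_k}$ for $|t_k| \leq M$. Since $C(\varphi_k) \asymp (3 + |t_k|)^2 \leq Q^{1/11}$ once $Q$ is large (using $M \leq Q^{1/22}$), the hypothesis $\pi' \in \mathfrak{F}_2(Q^{1/11})$ holds. In case (i) I apply \cref{prop:subconvexity} with $n = 4$, $n' = 2$ directly; in case (ii), the factorization $L(s, \As\phi \times \varphi_k) = L(s, (\Ad\pi \otimes \chi_D) \times \varphi_k) L(s, \varphi_k)$ reduces matters to \cref{prop:subconvexity} with $n = 3$, $n' = 2$ applied to the first factor, combined with the convexity bound $|L(\tfrac12, \varphi_k)| \ll C(\varphi_k)^{1/4 + \epsilon'}$ for the second. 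The resulting exponent $\tfrac12 - \tfrac{12\epsilon}{10^{12}}$ on $C(\As\phi)$ is sharper than the target $\tfrac12 - \tfrac{\epsilon}{9 \cdot 10^{11}}$, and summing the $\ll_{D,\epsilon} Q^\epsilon$ exceptional $\phi$ per $\varphi_k$ over the $\ll M^2$ such $\varphi_k$ via the Weyl law \eqref{eqn:Weyl_law} yields $|\mathscr{D}_2(Q,M)| \ll_{D,\epsilon} M^2 Q^\epsilon$.

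There is no serious obstacle beyond careful bookkeeping: one must verify that $C(\pi_{\varphi_k})$ indeed lies within the hypothesis of \cref{prop:subconvexity}, that the comparison $C(\Ad\pi \otimes \chi_D) \asymp C(\As\phi)$ holds in the base-change case, and that the slack between the subconvex exponents and the target exponents suffices to absorb the archimedean $(3 + |t|)$ factors; each of these is a direct check.
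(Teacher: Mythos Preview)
Your proposal is correct and follows essentially the same approach as the paper's proof: split $\mathscr{D}_2(Q,M)$ according to whether $\As\phi$ is cuspidal or not, apply \cref{prop:subconvexity} with $(n,n')=(4,2)$ in the cuspidal case and $(n,n')=(3,2)$ together with the convexity bound for $L(\tfrac12,\varphi_k)$ in the base-change case, and sum over $\varphi_k$ via the Weyl law. The only discrepancy is bookkeeping: the arithmetic conductor of $\As\phi$ is $D$, so the paper places $\As\phi$ in $\mathfrak{F}_4(2DQ)$ and $\Ad\pi\otimes\chi_D$ in $\mathfrak{F}_3(2DQ)$ rather than $\mathfrak{F}_4(2Q)$ and $\mathfrak{F}_3(Q)$, and the paper notes that the map $\phi\mapsto\Ad\pi\otimes\chi_D$ is in fact injective (not merely $2$-to-$1$); both points are harmless here since the implied constants depend on $D$.
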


\begin{proof}
We give the details for the second part only; the details for the first part are simpler. Given a Hilbert Hecke--Maa\ss{} cusp form $\phi \in \mathpzc{F}_{\As}(Q)$ with archimedean spectral parameters $t_{1,\phi}$ and $t_{2,\phi}$, let $\Pi$ denote the underlying cuspidal automorphic representation of $\GL_2(\A_E)$.  This has arithmetic conductor $\cO_E$, and its two archimedean components are principal series representations with spectral parameters $t_{1,\phi}$ and $t_{2,\phi}$. The Asai transfer $\As \Pi$ of $\Pi$ is an automorphic representation of $\GL_4(\A_{\Q})$ of analytic conductor $C(\As \phi) D$. If $\Pi$ is the base change of $\pi$, as in \cref{lem:BC}, then as $\pi$ is nondihedral, $\Ad \pi \otimes \omega_{E/\Q}$ is a cuspidal automorphic representation of $\GL_3(\A_{\Q})$ and also has analytic conductor $C(\As \phi) D$.

We now bound the cardinality of $\mathscr{D}_2(Q,M)$ by separately estimating the cardinalities of the cuspidal Asai transfer subfamily
\begin{multline*}
\big\{\phi \in \mathpzc{F}_{\As}(Q): \As \phi \textup{ cuspidal, there exists $\varphi_k$ with $|t_k| \leq M$ such that}	\\
L(\tfrac{1}{2}, \As \phi \times \varphi_k) \geq C(\As \phi)^{\frac{1}{2} - \frac{\epsilon}{9 \cdot 10^{11}}} C(\varphi_k)\big\}
\end{multline*}
and the noncuspidal Asai transfer subfamily
\begin{multline*}
\big\{\phi \in \mathpzc{F}_{\As}(Q): \As \phi \textup{ noncuspidal, there exists $\varphi_k$ with $|t_k| \leq M$ such that}	\\
L(\tfrac{1}{2}, \As \phi \times \varphi_k) \geq C(\As \phi)^{\frac{1}{2} - \frac{\epsilon}{9 \cdot 10^{11}}} C(\varphi_k)\big\}
\end{multline*}

The cardinality of the cuspidal Asai transfer subfamily is
\[\ll \sum_{|t_k| \leq M} |\{\pi \in \mathfrak{F}_4(2DQ)\colon |L(\tfrac{1}{2},\pi \times \pi_{\varphi_k})|\geq C(\pi)^{\frac{1}{2}-\frac{\epsilon}{9 \cdot 10^{11}}} C(\pi_{\varphi_k})\}|.\]
By \cref{prop:subconvexity} and the Weyl law \eqref{eqn:Weyl_law}, this is $O_{D,\epsilon}(M^2 Q^{\epsilon})$.

Next, if $\As \phi$ is noncuspidal, \cref{lem:Asaifibres} shows that there exists a Hecke--Maa\ss{} newform $\varphi$ of arithmetic conductor $D$ and nebentypus $\chi_D$ such that
\[
L(\tfrac{1}{2}, \As \phi \times \varphi_k) = L(\tfrac{1}{2}, (\Ad \varphi \otimes \chi_D) \times \varphi_k) L(\tfrac{1}{2},\varphi_k),
\]
and this map from $\phi$ to $\Ad \varphi \otimes \chi_D$ is injective. Upon invoking the convexity bound 
$L(\tfrac{1}{2},\pi_{\varphi_k})\ll C(\pi_{\varphi_k})^{1/4}$ from \eqref{eqn:convexity}, we deduce that the cardinality of the noncuspidal transfer family is
\[\ll \sum_{|t_k|\leq M} |\{\pi \in \mathfrak{F}_3(2DQ)\colon |L(\tfrac{1}{2},\pi \times \pi_{\varphi_k})| \geq C(\pi)^{\frac{1}{2}-\frac{\epsilon}{9 \cdot 10^{11}}} C(\pi_{\varphi_k})^{\frac{3}{4}}\}|.\]
Again by \cref{prop:subconvexity} and the Weyl law \eqref{eqn:Weyl_law}, this is $O_{D,\epsilon}(M^2 Q^{\epsilon})$.
\end{proof}

\subsection{Proofs of Lemmata \ref{lem:realBS}, \ref{lem:realEis}, and \ref{lem:realtriple}}
\label{sect:proofsperiods}

The Fourier expansion of a Hilbert Hecke--Maa\ss{} cusp form $\phi$ is
\begin{equation}
\label{eqn:realFourier}
\phi(z_1,z_2) = \rho(\phi) \sum_{\substack{\alpha \in \cO_E \\ \alpha \neq 0}} \lambda_{\phi}(\alpha) \sqrt{y_1} K_{it_{1,\phi}}\Big(\frac{2\pi |\alpha| y_1}{\sqrt{D}}\Big) \sqrt{y_2} K_{it_{2,\phi}}\Big(\frac{2\pi |\sigma(\alpha)| y_2}{\sqrt{D}}\Big) e\Big(\frac{\alpha x_1 - \sigma(\alpha) x_2}{\sqrt{D}}\Big).
\end{equation}
The positive constant $\rho(\phi)$ ensures that $\phi$ is $L^2$-normalized with respect to the measure $d\mu$ on $\SL_2(\cO_E) \backslash \H \times \H$. An exact formula for $\rho(\phi)$ is given below.

\begin{lemma}[{\cite[Lemma 3.2]{Che21}}]
Let $\phi$ be a Hilbert Hecke--Maa\ss{} cusp form. Then
\begin{equation}
\label{eqn:rhophi}
\rho(\phi)^2 = \frac{8}{\sqrt{D}} \frac{1}{\Lambda(1, \Ad \phi)}.
\end{equation}
\end{lemma}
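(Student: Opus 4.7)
The proof proceeds by a Rankin--Selberg unfolding computation applied to the $L^2$-norm of $\phi$. The plan is to introduce the Hilbert Eisenstein series $E_{\infty}(z_1,z_2,s)$ attached to the cusp at infinity of $\SL_2(\cO_E) \backslash \H \times \H$, consider the integral
\[
I(s) \coloneqq \int_{\SL_2(\cO_E) \backslash \H \times \H} |\phi(z_1,z_2)|^2 E_{\infty}(z_1,z_2,s) \, d\mu(z_1,z_2),
\]
and evaluate it in two different ways before comparing residues at $s=1$. On the one hand, $E_{\infty}(\cdot,s)$ has a simple pole at $s=1$ whose residue is a constant multiple of $\vol(\SL_2(\cO_E) \backslash \H \times \H)^{-1} = (2\sqrt{D}\,\xi_E(2))^{-1}$, so the $L^2$-normalization of $\phi$ determines the residue of $I(s)$ at $s=1$ explicitly in terms of $D$ and $\xi_E(2)$.

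On the other hand, I would unfold $I(s)$ against $\phi$ via the standard Bruhat decomposition to obtain an integral over $\Gamma_{\infty} \backslash \H \times \H$ of $|\phi|^2 (y_1 y_2)^s$. Substituting the Fourier expansion \eqref{eqn:realFourier} and applying Parseval in $x_1,x_2$ over the fundamental domain for translations by $\cO_E$ kills all off-diagonal terms and leaves a Dirichlet series in $|\lambda_{\phi}(\alpha)|^2/\mathrm{N}(\alpha)^s$ multiplied by two $K$-Bessel Mellin transforms. Using the identity
\[
\int_0^{\infty} K_{it}(y)^2 y^{s-1} \, dy = \frac{\Gamma(\frac{s}{2})^2 \Gamma(\frac{s}{2}+it)\Gamma(\frac{s}{2}-it)}{4\,\Gamma(s)}
\]
at each of the two archimedean places, together with a change of variables to absorb the factor $2\pi/\sqrt{D}$, I would identify the resulting expression as $\rho(\phi)^2$ times the completed Rankin--Selberg $L$-function $\Lambda(s,\phi \times \widetilde{\phi})$, up to a harmless power of $D$ arising from the discriminant in the normalization of $d\mu$ and of the Fourier coefficients.

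The key algebraic input is the factorization $L(s,\phi \times \widetilde{\phi}) = \zeta_E(s) L(s,\Ad\phi)$, which upgrades to an equality of completed $L$-functions after absorbing the correct gamma factors. Taking the residue at $s=1$ on the unfolded side thus produces $\rho(\phi)^2 \cdot \Res_{s=1}\xi_E(s) \cdot \Lambda(1,\Ad\phi)$ multiplied by an explicit constant. Equating this with the residue computed on the spectral side, and using the class number formula (together with the hypothesis that $E$ has narrow class number $1$) to evaluate $\Res_{s=1}\xi_E(s)$ in terms of $\xi_E(2)$ via the functional equation, the factors of $D$ and $\xi_E(2)$ cancel and one solves for $\rho(\phi)^2 = 8/(\sqrt{D}\,\Lambda(1,\Ad\phi))$.

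The main obstacle is purely bookkeeping: one must track the normalization conventions at every step—the $\sqrt{D}$ in the measure and in the Fourier exponentials, the $D^{s/2}$ in the completed Dedekind zeta function, the different conventions for completed Rankin--Selberg $L$-functions versus $L(s,\phi_\infty \times \widetilde{\phi}_\infty)$, and the factor of $4$ in the $K$-Bessel Mellin transform. A conceptually cleaner alternative (and effectively what \cite{Che21} implements) is to run this comparison adelically, so that the local integrals at every place are evaluated in a uniform way against the corresponding local $L$-factors, thereby reducing the problem to a finite check of archimedean integrals and the explicit computation of $\vol(\SL_2(\cO_E) \backslash \H \times \H)$.
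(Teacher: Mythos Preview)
Your classical Rankin--Selberg unfolding is the right idea and is morally the computation underlying the cited result, but the paper itself takes a different route: it lifts $\phi$ to an ad\`{e}lic form $\Phi$, invokes \cite[Proposition~6]{Wal85} and \cite[(3.16)]{Che21} (with some corrected constants) to obtain
\[
\int\limits_{\Zgp(\A_E)\GL_2(E)\backslash\GL_2(\A_E)}|\Phi(g)|^2\,dg=\frac{1}{8}\,\rho(\phi)^2\,\frac{\Lambda(1,\Ad\phi)}{\xi_E(2)},
\]
and then simply compares the Tamagawa volume $2$ to the classical volume $2\sqrt{D}\,\xi_E(2)$ of $\SL_2(\cO_E)\backslash\H\times\H$ to read off \eqref{eqn:rhophi}. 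This ad\`{e}lic shortcut packages all the local bookkeeping into the cited identities; your approach is more self-contained but, as you anticipated, demands careful constant-tracking.

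There is one genuine slip in your sketch. You propose to relate $\Res_{s=1}\xi_E(s)$ to $\xi_E(2)$ ``via the functional equation,'' but $\xi_E(s)=\xi_E(1-s)$ only links $\xi_E(2)$ to $\xi_E(-1)$; the residue at $s=1$ carries, via the analytic class number formula, the regulator $R_E=\log\varepsilon_E$, which has no algebraic relation to $\xi_E(2)$ whatsoever. The missing ingredient is that $\Gamma_\infty\subset\SL_2(\cO_E)$ contains not only the translations by $\cO_E$ that you mention but also the diagonal unit action $(z_1,z_2)\mapsto(\epsilon^2 z_1,\sigma(\epsilon)^2 z_2)$ for $\epsilon\in\cO_E^\times$. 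Handling this correctly---by simultaneously unfolding the $\alpha$-sum against units (so that it becomes a sum over nonzero ideals) and extending the $(y_1,y_2)$-integration from a fundamental domain for the unit action to all of $(\R_{>0})^2$---is precisely where the regulator enters and then cancels against its appearance in the residue of the Eisenstein series. Without this step your two ``Mellin transforms of $K$-Bessel'' do not simply factor, and the proposed cancellation mechanism fails. This is exactly the sort of headache the ad\`{e}lic formulation in the paper's proof is designed to avoid.
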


\begin{proof}
This is essentially proven in \cite[Lemma 3.2]{Che21}, albeit with some minor errata; we sketch the main ideas. First, we let $\Phi$ denote the ad\`{e}lic lift of $\phi$. Then by \cite[Proposition 6]{Wal85} and \cite[(3.16)]{Che21}, we have that
\begin{equation}
\label{eqn:realphiL2}
\int\limits_{\Zgp(\A_E) \GL_2(E) \backslash \GL_2(\A_E)} |\Phi(g)|^2 \, dg = \frac{1}{8} \rho(\phi)^2 \frac{\Lambda(1, \Ad \phi)}{\xi_E(2)}.
\end{equation}
Here $dg$ denotes the Tamagawa measure, so that $\Zgp(\A_E) \GL_2(E) \backslash \GL_2(\A_E)$ has volume $2$, and we have taken $q = 1$ in \cite[(3.16)]{Che21} and corrected the erroneous factor $2^{-2\delta_{\mathcal{D}}}$ to instead be $1/16$. Our result differs additionally from that in \cite[(3.16)]{Che21} since our definition \eqref{eqn:Lambdaspixpi'} of the completed $L$-function includes the arithmetic conductor and the discriminant. It remains to note that
\[\int\limits_{\Zgp(\A_E) \GL_2(E) \backslash \GL_2(\A_E)} |\Phi(g)|^2 \, dg = \frac{1}{\sqrt{D} \xi_E(2)} \int_{\SL_2(\cO_E) \backslash \H \times \H} |\phi(z_1,z_2)|^2 \, d\mu(z_1,z_2),\]
where the normalising factor comes from comparing the volume of $\Zgp(\A_E) \GL_2(E) \backslash \GL_2(\A_E)$ with respect to the Tamagawa measure to that of $\SL_2(\cO_E) \backslash \H \times \H$ with respect to $d\mu$.
\end{proof}

\begin{lemma}[{Cf.~\cite[Lemma 4.3]{Che21}}]
Let $\phi$ be a Hilbert Hecke--Maa\ss{} cusp form, and suppose that $\Re(s) > 1$. Then
\begin{equation}
\label{eqn:realphiE}
\int_{\Gamma \backslash \H} \phi(z,z) E(z,s) \, d\mu(z) = \frac{1}{4} \rho(\phi) \frac{\Lambda(s, \As \phi)}{\xi(2s)}.
\end{equation}
\end{lemma}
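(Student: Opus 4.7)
The plan is to carry out a Rankin--Selberg unfolding. Writing $E(z,s) = \sum_{\gamma \in \Gamma_\infty \backslash \Gamma} \Im(\gamma z)^s$, which converges absolutely for $\Re(s) > 1$, and observing that $\phi(z,z)$ is $\Gamma$-invariant since $\Gamma$ embeds diagonally into $\SL_2(\cO_E)$, unfolding yields
\[
\int_{\Gamma \backslash \H} \phi(z,z) E(z,s)\, d\mu(z) = \int_0^\infty \int_0^1 \phi(x+iy,\, x+iy)\, y^{s-2}\, dx\, dy.
\]
I would then substitute the Fourier expansion \eqref{eqn:realFourier} at $z_1 = z_2 = z$. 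The exponential reduces to $e\!\left(\tfrac{(\alpha-\sigma(\alpha))x}{\sqrt{D}}\right)$, and a short case analysis on the two types of fundamental discriminant ($D \equiv 1 \pmod{4}$ versus $D \equiv 0 \pmod{4}$) shows that $\tfrac{\alpha-\sigma(\alpha)}{\sqrt{D}} \in \Z$ for every $\alpha \in \cO_E$, vanishing exactly when $\alpha \in \Z$. Hence the $x$-integration selects the subsum over $\alpha \in \Z \setminus \{0\}$, and invariance of the Fourier coefficients under the unit $-1 \in \cO_E^\times$, combined with the fact that both Bessel factors depend only on $|n|$, collapses the remaining sum to twice the sum over positive integers.

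For the $y$-integral I would substitute $u = 2\pi n y/\sqrt{D}$ and invoke the classical Mellin identity
\[
\int_0^\infty u^{s-1} K_{it_{1,\phi}}(u) K_{it_{2,\phi}}(u)\, du = \frac{2^{s-3}}{\Gamma(s)} \prod_{\pm_1,\, \pm_2} \Gamma\!\left(\frac{s \pm_1 it_{1,\phi} \pm_2 it_{2,\phi}}{2}\right).
\]
Collecting the powers of $2$, $\pi$, and $n$, the combinatorial factors consolidate to $\tfrac{1}{4}\rho(\phi) \cdot \tfrac{D^{s/2} \pi^s}{\Gamma(s)}$, and the product of four gamma functions equals $\pi^{2s} L(s, \As\phi_\infty)$ once one identifies the archimedean Langlands parameters of $\As\phi$ as $\{\pm_1 it_{1,\phi} \pm_2 it_{2,\phi}\}$; this last fact is most transparent via the base-change case, where the isobaric decomposition $\As\pi = \Ad\pi \boxplus \mathbbm{1}$ from \cref{lem:BC} makes the archimedean parameters manifest.

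To close the argument, I would invoke the classical Asai Dirichlet-series identity
\[
\sum_{n=1}^\infty \frac{\lambda_\phi(n)}{n^s} = \frac{L(s,\As\phi)}{\zeta(2s)},
\]
together with the fact that $q_{\As\phi} = D$ for $\phi$ of level $\cO_E$, so that $\Lambda(s,\As\phi) = D^{s/2} L(s,\As\phi_\infty) L(s,\As\phi)$, and the factorization $\xi(2s) = \pi^{-s} \Gamma(s) \zeta(2s)$. The main obstacle is the Dirichlet-series identity itself, which must be verified prime by prime. At a split prime $p = \mathfrak{p}_1 \mathfrak{p}_2$, one combines the Hecke multiplicativity $\lambda_\phi(p^k) = \lambda_\phi(\mathfrak{p}_1^k) \lambda_\phi(\mathfrak{p}_2^k)$ with the local Rankin--Selberg Euler factor $L_p(s, \Pi_{\mathfrak{p}_1} \times \Pi_{\mathfrak{p}_2})$; at an inert prime, the fact that $(p) = \mathfrak{p}$ has norm $p^2$ lets one identify the generating function with $L_p(s,\As\phi)(1-p^{-2s}) = (1 - \alpha_\mathfrak{p} p^{-s})^{-1}(1 - \beta_\mathfrak{p} p^{-s})^{-1}$. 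In both cases the $(1 - p^{-2s})$ correction is exactly the local factor of $\zeta(2s)^{-1}$, and the bookkeeping of the remaining constants then delivers \eqref{eqn:realphiE}.
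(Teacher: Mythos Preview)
Your proposal is correct and follows essentially the same route as the paper: unfold the Eisenstein series, insert the Fourier expansion \eqref{eqn:realFourier}, use orthogonality in $x$ to reduce to $\alpha \in \Z \setminus \{0\}$, evaluate the Bessel integral via the Weber--Schafheitlin formula (the paper cites \cite[6.576.4]{GR15}), and close with Asai's Dirichlet-series identity (for which the paper simply invokes \cite[Theorem~2]{Asa77} rather than checking it prime by prime as you sketch). One small caveat: your justification of the archimedean parameters of $\As\phi$ ``via the base-change case'' only treats the degenerate situation $t_{1,\phi} = t_{2,\phi}$, so it does not actually pin down the four parameters $\{\pm it_{1,\phi} \pm it_{2,\phi}\}$ in general; this identification is a purely local fact about the Asai lift of a pair of unramified principal series of $\GL_2(\R)$ and should be cited or verified directly rather than deduced from \cref{lem:BC}.
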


\begin{proof}
Consider the integral
\[\int_{\Gamma \backslash \H} \phi(z,z) E(z,s) \, d\mu(z).\]
By unfolding via the automorphy of $\phi(z,z)$ and then inserting the expansion \eqref{eqn:realFourier}, this equals
\[\rho(\phi) \sum_{\substack{\alpha \in \cO_E \\ \alpha \neq 0}} \lambda_{\phi}(\alpha) \int_{0}^{\infty} K_{it_{1,\phi}}\Big(\frac{2\pi |\alpha| y}{\sqrt{D}}\Big) K_{it_{2,\phi}}\Big(\frac{2\pi |\sigma(\alpha)| y}{\sqrt{D}}\Big) y^{s - 1} \, dy \int_{0}^{1} e\Big(\frac{(\alpha - \sigma(\alpha)) x}{\sqrt{D}}\Big) \, dx.\]
The integral over $x$ vanishes unless $\alpha = \sigma(\alpha)$ (so that $\alpha = m$ for some $m \in \Z \setminus \{0\}$), in which case the integral over $x$ is equal to $1$. The remaining integral over $y$ is equal to
\[\frac{D^{s/2}}{8 |m|^s} \frac{\Gamma_{\R}(s + it_{1,\phi} + it_{2,\phi}) \Gamma_{\R}(s + it_{1,\phi} - it_{2,\phi}) \Gamma_{\R}(s - it_{1,\phi} + it_{2,\phi}) \Gamma_{\R}(s - it_{1,\phi} - it_{2,\phi})}{\Gamma_{\R}(2s)}\]
by \cite[6.576.4]{GR15}, while just as in \cite[Theorem 2]{Asa77}, we have that
\[\sum_{m \in \Z-\{0\}} \frac{\lambda_{\phi}(m)}{|m|^s} = \frac{2 L(s, \As \phi)}{\zeta(2s)}.\]
The desired identity thereby follows.
\end{proof}

\begin{proof}[Proof of \cref{lem:realBS}]
Taking the residue of both sides of \eqref{eqn:realphiE} at $s = 1$, we see that
\[
\int_{\Gamma \backslash \H} \phi(z,z) \, d\mu(z) = \frac{1}{2} \rho(\phi) \Res_{s = 1} \Lambda(s, \As \phi).
\]
From \cref{lem:BC}, $\Lambda(s, \As \phi)$ has a pole at $s = 1$ if and only if $\phi$ is the base change of a nondihedral Hecke--Maa\ss{} cuspidal newform $\varphi$ of weight $0$, level $D$, and nebentypus $\chi_D$. If this is the case, then $\Lambda(s, \As \phi) = \Lambda(s, \Ad \varphi \otimes \chi_D) \xi(s)$, and consequently
\[
\int_{\Gamma \backslash \H} \phi(z,z) \, d\mu(z) = \frac{1}{2} \rho(\phi) \Lambda(1, \Ad \varphi \otimes \chi_D).
\]
Finally, we note that $\Lambda(s, \Ad \phi) = \Lambda(s, \Ad \varphi) \Lambda(s, \Ad \varphi \otimes \chi_D)$, and so from \eqref{eqn:rhophi}, we have that $\rho(\phi) = 2\sqrt{2} D^{-1/4} (\Lambda(1, \Ad \varphi) \Lambda(1, \Ad \varphi \otimes \chi_D))^{-1/2}$.
\end{proof}

\begin{proof}[Proof of \cref{lem:realEis}]
This follows from \eqref{eqn:realphiE} via analytic continuation.
\end{proof}

\begin{proof}[Proof of \cref{lem:realtriple}]
If $W_k = -1$, then the result follows upon making the change of variables $z \mapsto -\overline{z}$. Otherwise, we apply \cite[Theorem 5.6]{Che21}, which states that
\begin{multline*}
\Big|\int\limits_{\Zgp(\A_{\Q}) \GL_2(\Q) \backslash \GL_2(\A_{\Q})} \Phi(h,h) \Psi_k(h) \, dh\Big|^2 = \frac{1}{4} \frac{\xi_E(2)}{\xi(2)^2} \frac{\Lambda(\frac{1}{2}, \As \phi \times \varphi_k)}{\Lambda(1, \Ad \phi) \Lambda(1, \Ad \varphi_k)}	\\
\times \int\limits_{\Zgp(\A_E) \GL_2(E) \backslash \GL_2(\A_E)} |\Phi(g)|^2 \, dg \int\limits_{\Zgp(\A_{\Q}) \GL_2(\Q) \backslash \GL_2(\A_{\Q})} |\Psi_k(h)|^2 \, dh.
\end{multline*}
Here all measures involved are the Tamagawa measures, $\Phi$ denotes the ad\`{e}lic lift of $\phi$, $\Psi_k$ denotes the ad\`{e}lic lift of $\varphi_k$, and we have used \cite[Proposition 6.14]{Che21} to determine the local constants arising from the archimedean place. The left-hand side is equal to
\[
\frac{6}{\pi} \Big|\int_{\Gamma \backslash \H} \phi(z,z) \varphi_k(z) \, d\mu(z)\Big|^2,
\]
where the normalising factor comes from comparing the volume of $\Zgp(\A_{\Q}) \GL_2(\Q) \backslash \GL_2(\A_{\Q})$ with respect to the Tamagawa measure to that of $\Gamma \backslash \H$ with respect to $d\mu$, while via \eqref{eqn:realphiL2}, the right-hand side is equal to
\[\frac{3}{16 \pi} \rho(\phi)^2 \frac{\Lambda(\frac{1}{2}, \As \phi \times \varphi_k)}{\Lambda(1, \Ad \varphi_k)},\]
since
\[\int\limits_{\Zgp(\A_{\Q}) \GL_2(\Q) \backslash \GL_2(\A_{\Q})} |\Psi_k(h)|^2 \, dh = \frac{6}{\pi} \int_{\Gamma \backslash \H} |\varphi_k(z)|^2 \, d\mu(z) = \frac{6}{\pi}\]
as $\varphi_k$ is $L^2$-normalized. It remains to insert the identity \eqref{eqn:rhophi}.
\end{proof}

\subsection{Proof of \texorpdfstring{\cref{thm:nonsplitrealQE}}{Theorem \ref{thm:nonsplitrealQE}}}

Given $H \in C_c^{\infty}(\Gamma \backslash \H)$, we consider
\begin{equation}
\label{eqn:Hdmuj}
\mathcal{D}_j(H) \coloneqq \int_{\Gamma \backslash \H} H(z) \, d\mu_j(z) - \frac{3}{\pi} \mu_j(\Gamma \backslash \H) \int_{\Gamma \backslash \H} H(z) \, d\mu(z).
\end{equation}

\begin{lemma}
Let $\phi_j$ be a Hilbert Hecke--Maa\ss{} cusp form.  If $H \in C_c^{\infty}(\Gamma \backslash \H)$, then for any $0 < \epsilon' < \tfrac{1}{2}$,
\begin{multline}
\label{eqn:DjHbound}
\big|\mathcal{D}_j(H)\big|^2 \ll_{H,D,\epsilon'} C(\As \phi_j)^{-\frac{1}{2} + \epsilon'} \sum_{|t_k| \leq C(\As \phi_j)^{\epsilon'}} L(\tfrac{1}{2}, \As \phi_j \times \varphi_k)	\\
+ C(\As \phi_j)^{-\frac{1}{2} + \epsilon'} \int_{-C(\As \phi_j)^{\epsilon'}}^{C(\As \phi_j)^{\epsilon'}} \big|L(\tfrac{1}{2} + it, \As \phi_j)\big|^2 \, dt + C(\As \phi_j)^{-100}.
\end{multline}
\end{lemma}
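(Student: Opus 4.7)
The strategy is to spectrally expand $H$ on $\Gamma\backslash\H$ and reduce the problem to the period identities in \cref{lem:realEis} and \cref{lem:realtriple}, then handle the archimedean factors via Stirling and the Hoffstein--Lockhart bound. Applying Plancherel on $L^2(\Gamma\backslash\H)$,
\[
H(z) = \frac{3}{\pi}\int_{\Gamma\backslash\H} H\,d\mu + \sum_{k=1}^{\infty} \langle H,\varphi_k\rangle_1 \varphi_k(z) + \frac{1}{4\pi}\int_{-\infty}^{\infty} \langle H,E(\cdot,\tfrac{1}{2}+it)\rangle_1 E(z,\tfrac{1}{2}+it)\,dt,
\]
we multiply by $\phi_j(z,z)$ and integrate against $d\mu(z)$ over $\Gamma\backslash\H$. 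The constant component contributes exactly $\frac{3}{\pi}\mu_j(\Gamma\backslash\H)\int H\,d\mu$, which cancels the subtracted term in \eqref{eqn:Hdmuj}. Writing $I_k \coloneqq \int \varphi_k(z)\phi_j(z,z)\,d\mu(z)$ and $I(t) \coloneqq \int E(z,\tfrac{1}{2}+it)\phi_j(z,z)\,d\mu(z)$, we arrive at
\[
\mathcal{D}_j(H) = \sum_{k=1}^\infty \langle H,\varphi_k\rangle_1 I_k + \frac{1}{4\pi}\int_{-\infty}^{\infty} \langle H,E(\cdot,\tfrac{1}{2}+it)\rangle_1 I(t)\,dt.
\]

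Since $H \in C_c^\infty(\Gamma\backslash\H)$, repeated integration by parts against the Laplacian combined with Parseval gives $|\langle H,\varphi_k\rangle_1| \ll_{H,N}(1+|t_k|)^{-N}$ and $|\langle H,E(\cdot,\tfrac{1}{2}+it)\rangle_1| \ll_{H,N}(1+|t|)^{-N}$ for any $N$. Setting $T \coloneqq C(\As\phi_j)^{\epsilon'}$ and using \cref{lem:realtriple,lem:realEis} together with the convexity bound to estimate $|I_k|^2$ and $|I(t)|^2$ polynomially, the tail portions $|t_k|>T$ and $|t|>T$ contribute at most $C(\As\phi_j)^{-100}$. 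For the truncated ranges, apply Cauchy--Schwarz with weight $(1+|t_k|)^B$ (and $(1+|t|)^B$), using $\sum_k|\langle H,\varphi_k\rangle_1|^2(1+|t_k|)^B \ll_{H,B} 1$ via Parseval applied to a high enough iterate of $\Delta$ acting on $H$, to obtain
\[
|\mathcal{D}_j(H)|^2 \ll_{H,B} \sum_{|t_k|\leq T}(1+|t_k|)^{-B}|I_k|^2 + \int_{|t|\leq T}(1+|t|)^{-B}|I(t)|^2\,dt + C(\As\phi_j)^{-100}.
\]

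Inserting \cref{lem:realtriple} and \cref{lem:realEis} converts $|I_k|^2$ and $|I(t)|^2$ into $L$-function ratios. The factors $\Lambda(1,\Ad\phi_j)^{-1}\Lambda(1,\Ad\varphi_k)^{-1}$ contribute $\ll C(\As\phi_j)^{\epsilon'}(1+|t_k|)^{\epsilon'}$ via the Hoffstein--Lockhart bound \eqref{eqn:HL}. Applying Stirling to the archimedean gamma factors of $\Lambda(\tfrac{1}{2},\As\phi_j\times\varphi_k)$, whose Langlands parameters are $\{\pm_1 it_{1,\phi_j}\pm_2 it_{2,\phi_j}\pm_3 it_k\}$, produces a factor bounded by $\prod_{\pm_1,\pm_2}(3+|t_{1,\phi_j}\pm_1 t_{2,\phi_j}\pm_2 t_k|)^{-1/2}$; combining with \eqref{eqn:Asaicond} and $|t_k|\leq T$ yields $\ll C(\As\phi_j)^{-1/2+\epsilon'}(1+|t_k|)^{O(1)}$, with the polynomial factor absorbed once $B$ is taken sufficiently large. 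Positivity of $L(\tfrac{1}{2},\As\phi_j\times\varphi_k)$ via \cite{Lap03} then permits dropping the remaining weights. The Eisenstein integral is handled in the same fashion, noting that $|\xi(1+2it)|^{-2}$ is polylogarithmic in $|t|$ away from $t=0$ and vanishes to order $2$ at $t=0$ (from the pole of $\zeta(s)$ at $s=1$), hence poses no integrability issue.

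The main obstacle lies in the Stirling analysis: the archimedean gamma ratios exhibit the piecewise behaviour governed by $\Omega(t_k,t_{1,\phi_j},t_{2,\phi_j})$ from \eqref{eqn:Omegadef}, which introduces exponential decay in the ``unbalanced'' regimes. However, since $|t_k|\leq T\leq C(\As\phi_j)^{\epsilon'}$ is dwarfed by the archimedean parameters of $\phi_j$ (which drive $C(\As\phi_j)$), the uniform polynomial estimate from the balanced regime suffices, and any such exponential decay only helps.
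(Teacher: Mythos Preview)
Your proposal is correct and follows essentially the same route as the paper: spectrally expand $H$, cancel the constant term, invoke \cref{lem:realEis,lem:realtriple}, use self-adjointness of $\Delta$ together with Cauchy--Schwarz and Bessel to control the spectral coefficients and truncate, then apply Stirling to the gamma factors and Hoffstein--Lockhart to the adjoint $L$-values at $1$. The paper carries out exactly these steps, with the same truncation parameter $C(\As\phi_j)^{\epsilon'}$; your explicit mention of Lapid's nonnegativity to drop the residual weights is implicit in the paper's final step.
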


\begin{proof}
By the spectral decomposition of $H$, Lemmata \ref{lem:realEis} and \ref{lem:realtriple}, and Stirling's formula, $|\mathcal{D}_j(H)|$ is
\begin{multline*}
\ll_D \sum_{\varphi_k} |\langle H, \varphi_k\rangle| \Big(\frac{L(\tfrac{1}{2}, \As \phi_j \times \varphi_k)}{L(1, \Ad \phi_j) L(1, \Ad \varphi_k)}\Big)^{\frac{1}{2}} e^{-\frac{\pi}{2} \Omega(t_k,t_{1,j},t_{2,j})} \prod_{\pm_1, \pm_2} (3 + |t_{1,j} \pm_1 t_{2,j} \pm_2 t_k|)^{-\frac{1}{4}}	\\
+ \int_{-\infty}^{\infty} |\langle H, E(\cdot, \tfrac{1}{2} + it)\rangle| \frac{|L(\tfrac{1}{2} + it, \As \phi_j)|}{L(1, \Ad \phi_j)^{\frac{1}{2}} |\zeta(1 + 2it)|} e^{-\frac{\pi}{2} \Omega(t,t_{1,j},t_{2,j})} \prod_{\pm_1, \pm_2} (3 + |t_{1,j} \pm_1 t_{2,j} \pm_2 t|)^{-\frac{1}{4}} \, dt,
\end{multline*}
where $\Omega(t,t_{1,j},t_{2,j})$ is as in \eqref{eqn:Omegadef}. Since $H$ is smooth and the Laplacian is self-adjoint, we have that $\langle H, \varphi_k\rangle = \lambda_k^{-N} \langle \Delta^N H, \varphi_k\rangle$ and $\langle H, E(\cdot,\tfrac{1}{2} + it)\rangle = (\frac{1}{4} + t^2)^{-N} \langle \Delta^N H, E(\cdot,\tfrac{1}{2} + it)\rangle$ for any nonnegative integer $N$. By the Cauchy--Schwarz inequality and Bessel's inequality, we deduce that for any nonnegative integer $N$,
\begin{multline*}
|\mathcal{D}_j(H)|^2 \ll_{D,N} \|\Delta^N H\|_2^2 \sum_{\varphi_k} \frac{L(\tfrac{1}{2}, \As \phi_j \times \varphi_k)}{L(1, \Ad \phi_j) L(1, \Ad \varphi_k)} (3 + |t_k|)^{-4N}	\\
\times e^{-\pi \Omega(t_k,t_{1,j},t_{2,j})} \prod_{\pm_1, \pm_2} (3 + |t_{1,j} \pm_1 t_{2,j} \pm_2 t_k|)^{-\frac{1}{2}}	\\
+ \|\Delta^N H\|_2^2 \int_{-\infty}^{\infty} \frac{|L(\tfrac{1}{2} + it, \As \phi_j)|^2}{L(1, \Ad \phi_j) |\zeta(1 + 2it)|^2} (3 + |t|)^{-4N}	\\
\times e^{-\pi \Omega(t,t_{1,j},t_{2,j})} \prod_{\pm_1, \pm_2} (3 + |t_{1,j} \pm_1 t_{2,j} \pm_2 t|)^{-\frac{1}{2}} \, dt.
\end{multline*}
Taking $N$ sufficiently large and invoking the convexity bound \eqref{eqn:convexity}, we see that we may truncate the sum over $\varphi_k$ to $|t_k| \leq C(\As \phi_j)^{\epsilon'}$ and the integral over $t$ to $|t| \leq C(\As \phi_j)^{\epsilon'}$ at the cost of an error term of size $O_{H,D}(C(\As \phi_j)^{-100})$. In these remaining ranges, we have that
\[\prod_{\pm_1, \pm_2} (3 + |t_{1,j} \pm_1 t_{2,j} \pm_2 t_k|)^{-\frac{1}{2}} \ll C(\As \phi_j)^{-\frac{1}{2}}.\]
The desired bound \eqref{eqn:DjHbound} for $|\mathcal{D}_j(H)|^2$ then follows from the bounds
\begin{gather*}
L(1,\Ad \phi_j)^{-1} \ll_{D,\epsilon'} C(\As \phi_j)^{\epsilon'}, \qquad L(1,\Ad \varphi_k)^{-1} \ll_{\epsilon'} C(\varphi_k)^{\epsilon'},	\\
|\zeta(1 + 2it)|^{-2} \ll_{\epsilon'} (3 + |t|)^{\epsilon'}.
\qedhere
\end{gather*}
\end{proof}

\begin{proof}[Proof of \cref{thm:nonsplitrealQE}]
Let $\mathscr{D}_1(Q,Q^{\epsilon'})$ and $\mathscr{D}_2(Q,Q^{\epsilon'})$ be as in \cref{prop:Asaibounds}. Let $\phi_j$ be an element of $\mathpzc{F}_{\As}(Q)$. If $\phi_j \notin \mathscr{D}_1(Q,Q^{\epsilon'})$, we have via \cref{prop:Asaibounds} that
\[C(\As \phi_j)^{-\frac{1}{2} + \epsilon'} \int_{-C(\As \phi_j)^{\epsilon'}}^{C(\As \phi_j)^{\epsilon'}} \big|L(\tfrac{1}{2} + it, \As \phi_j)\big|^2 \, dt \ll_{\epsilon'} C(\As \phi_j)^{- \frac{\epsilon}{9 \cdot 10^{11}} + 4\epsilon'}.\]
Similarly, if $\phi_j \notin \mathscr{D}_2(Q,Q^{\epsilon'})$, we have via \cref{prop:Asaibounds} and the Weyl law \eqref{eqn:Weyl_law} that
\[C(\As \phi_j)^{-\frac{1}{2} + \epsilon'} \sum_{|t_k| \leq C(\As \phi_j)^{\epsilon'}} L(\tfrac{1}{2}, \As \phi_j \times \varphi_k) \ll_{\epsilon'} C(\As \phi_j)^{- \frac{\epsilon}{9 \cdot 10^{11}} + 5\epsilon'}.\]
By \eqref{eqn:DjHbound}, we deduce that if $\phi_j \notin \mathscr{D}_1(Q,Q^{\epsilon'}) \cup \mathscr{D}_2(Q,Q^{\epsilon'})$, then
\[\mathcal{D}_j(H) \ll_{H,D,\epsilon'} C(\As \phi_j)^{- \frac{\epsilon}{9 \cdot 10^{11}} + 5\epsilon'}.\]
By \cref{prop:Asaibounds}, we have that $|\mathscr{D}_1(Q,Q^{\epsilon'})| + |\mathscr{D}_2(Q,Q^{\epsilon'})| \ll_{D,\epsilon'} Q^{\epsilon + 2\epsilon'}$. We finish by taking $\epsilon' = \tfrac{1}{5} \cdot 10^{-13} \epsilon$ and rescaling $\epsilon$.
\end{proof}

\subsection{Nonsplit quantum ergodicity for imaginary quadratic fields}
\label{sec:imaginary}

Finally, we consider the analogous problem in the setting of imaginary quadratic fields instead of real quadratic fields. Let $E = \Q(\sqrt{D})$ be an imaginary quadratic field with ring of integers $\cO_E$, where $D < 0$ is a fundamental discriminant; we assume for simplicity that $E$ has class number $1$. In place of $\H \times \H = (\SL_2(\R) \times \SL_2(\R)) / (\SO(2) \times \SO(2))$, we work on hyperbolic three-space $\H^3 = \SL_2(\mathbb{C}) / \SU(2)$, where we identity $\H^3$ with the subspace $\{P = x + iy + jr \colon x,r \in \R, \ y > 0\}$ of the Hamiltonian quaternions. A Bianchi Hecke--Maa\ss{} cusp form of level $\cO_E$ is an $L^2$-normalized smooth function $\phi \colon \H^3 \to \mathbb{C}$ for which
\begin{itemize}
\item $\phi$ is an eigenfunction of the weight $0$ Laplacian
\[\Delta \coloneqq -y^2 \Big(\frac{\partial^2}{\partial x^2} + \frac{\partial^2}{\partial y^2} + \frac{\partial^2}{\partial r^2}\Big) + y \frac{\partial}{\partial y},\]
so that $\Delta \phi(P) = \lambda_{\phi} \phi(P)$ for some $\lambda_{\phi} = 1 + 4t_{\phi}^2$ (and necessarily $t_{\phi} \in \R \cup -i[-\tfrac{7}{64},\tfrac{7}{64}]$),
\item $\phi$ is automorphic, so that $\phi(\gamma P) = \phi(P)$ for all $\gamma \in \SL_2(\cO_E)$, where
\[
\gamma P \coloneqq (aP + b) (cP + d)^{-1},\qquad \gamma = \begin{pmatrix} a & b \\ c & d \end{pmatrix}
\]
with the inverse and multiplication performed in the quaternion division algebra,
\item $\phi$ is of moderate growth,
\item $\phi$ is cuspidal, and
\item $\phi$ is a joint eigenfunction of every Hecke operator.
\end{itemize}
There is an embedding $\H \hookrightarrow \H^3$ given by the map $x + iy \mapsto x + iy$; we write $z$ for both the element $x + iy \in \H^2$ and for $x + iy \in \H^3$. A Bianchi Hecke--Maa\ss{} cusp form $\phi$ is $\SL_2(\Z)$-invariant when restricted to this embedding; thus $\phi(z)$ may be viewed as the restriction of a Bianchi Hecke--Maa\ss{} cusp form to the modular surface $\Gamma \backslash \H$.

The proof of \cref{thm:nonsplitimQE} is by the same methods as that of \cref{thm:nonsplitrealQE}; we therefore do not give details but rather highlight what alterations must be made. There are two major differences between the proofs of \cref{thm:nonsplitrealQE,thm:nonsplitimQE}. The first difference is that although the analogues of Lemmata \ref{lem:realBS}, \ref{lem:realEis}, and \ref{lem:realtriple} are valid in this setting, the bounds \eqref{eqn:realBSbounds} for $\mu_j(\Gamma \backslash \H)$ instead become
\[C(\phi_j)^{-1/8} \exp(-c_1 (\log C(\phi_j))^{1/2}) \ll_{D} \mu_j(\Gamma \backslash \H) \ll_{D} C(\phi_j)^{-1/8} \exp(c_2 (\log C(\phi_j))^{1/2})\]
(cf.~\cite[p.~2]{Hou}). This polynomial decay in $C(\phi) \coloneqq (3 + |t_{\phi}|)^4$ is why we must include the additional factor $C(\phi_j)^{1/8}$ in \eqref{eqn:imQUE}. This polynomial decay stems from the fact that in this setting, the square root of the gamma factors occurring in the completed $L$-functions on the right-hand side of \eqref{eqn:realBS} are
\[\sqrt{\frac{\Gamma_{\R}(2) \prod_{\pm} \Gamma_{\R}(2 \pm 2it_{\phi})}{\Gamma_{\R}(1) \prod_{\pm} \Gamma_{\R}(1 \pm 2it_{\phi})}},\]
and by Stirling's formula, this is asymptotic to $\frac{1}{\sqrt{2}\pi} (3 + |t_{\phi}|)^{-1/2} \asymp C(\phi)^{-1/8}$. The second difference is that the gamma factors present on the left-hand side of \eqref{eqn:Lindeloftoprove} are instead
\[\frac{\prod_{\pm_1, \pm_2} \Gamma_{\R}(\frac{1}{2} \pm_1 2it_{\phi} \pm_2 it_k) \prod_{\pm} \Gamma_{\mathbb{C}}(\frac{1}{2} \pm it_k)}{\Gamma_{\mathbb{C}}(1) \prod_{\pm} \Gamma_{\mathbb{C}}(1 \pm 2it_{\phi}) \Gamma_{\R}(1 \pm 2it_k)},\]
where $\Gamma_{\mathbb{C}}(s) \coloneqq 2(2\pi)^{-s} \Gamma(s)$. By Stirling's approximation, this is asymptotic to
\[4\pi^2 e^{-\pi \Omega(t_k,t_{\phi})} (3 + |t_{\phi}|)^{-1} \prod_{\pm} (3 + |2t_{\phi} \pm t_k|)^{-\frac{1}{2}}, \qquad \Omega(t,t_{\phi}) \coloneqq \begin{dcases*}
0 & if $|t| \leq 2|t_{\phi}|$,	\\
|t| - 2|t_{\phi}| & if $|t| \geq 2|t_{\phi}|$,
\end{dcases*}\]
while
\[C(\As \phi \times \varphi_k) = (3 + |t_k|)^4 (3 + |2t_{\phi} + t_k|)^2 (3 + |2t_{\phi} - t_k|)^2.\]
For this reason, showing that $|\mathcal{D}_j(H)| < C(\phi_j)^{-1/8 - \delta}$ for almost all $\phi_j \in \mathscr{F}(Q)$ essentially reduces to showing that $L(\frac{1}{2},\As \phi_j \times \phi_k) < Q^{1/4 - 2\delta}$ for almost all $\phi_j \in \mathscr{F}(Q)$.

\bibliographystyle{abbrv}
\bibliography{Humphries_Thorner_GLmxGLn_zero_density}

\end{document}